\newtheorem{theorem}{Theorem}[section]
\newtheorem{lemma}[theorem]{Lemma}
\theoremstyle{definition}
\newtheorem{example}[theorem]{Example}
\newcommand{\mytikz}[2]{\begin{tikzpicture}[scale = #1, baseline={([yshift=-.8ex]current bounding box.center)}]#2\end{tikzpicture}}
\newcommand{\trinone}{\draw (0.000, 0.000) -- (4.000, 0.000) -- (2.000,3.464) -- cycle;}
\newcommand{\tritop}{\draw (0.000, 0.000) -- (4.000, 0.000) -- (2.500, 2.598) -- (1.500, 2.598) -- cycle;}
\newcommand{\trilower}{\draw (2.000, 3.464) -- (0.500, 0.866) -- (1.000, 0.000) -- (3.000, 0.000) -- (3.500, 0.866) -- cycle;}
\newcommand{\triall}{\draw (2.500, 2.598) -- (1.500, 2.598) -- (0.500, 0.866) -- (1.000, 0.000) -- (3.000, 0.000) -- (3.500, 0.866) -- cycle;}
\newcommand{\tridown}{\draw (0.000, 3.464) -- (4.000, 3.464) -- (2.000, 0.000) -- cycle;}
\begin{document}
\begin{center}

{\Large\textbf{The Pattern Complexity of the Sierpi\'{n}ski Triangle}}

\vspace{2ex}
Johan Nilsson
\end{center}

\begin{abstract}
	We give exact formulas for the number of distinct triangular patterns (or subtriangles) of a given size that occur in the Sierpi\'{n}ski Triangle.

\end{abstract}

{\def\thefootnote{}\footnotetext{ 
MSC2010 classification: 
05A15 Exact enumeration problems,
05B45 Tessellation and tiling problems, 
52C20 Tilings in 2 dimensions.}}

\section{Introduction}

The Sierpi\'{n}ski triangle \cite{sierpinski} is a well known fractal structure. It can be created by starting from an equilateral triangle, which is sub-divided into 4 equilateral triangles and where the central one is removed. The removal procedure is now repeated recursively on the tree remaining triangles. The Sierpi\'{n}ski triangle appears in many areas, e.g.\ the attractor in the \emph{chaos game} \cite{barnsley}, in Pascal's triangle \cite[p.~80]{wolfram}, or in Wolfram's rule 90 in \cite[p.~25]{wolfram}, just to mention a few. In the On-Line Encyclopedia of Integer Sequences OEIS \cite{oeis} there are several entries concerning the Sierpi\'{n}ski triangle, such as \texttt{A047999}, and \texttt{A070886}. 

The approach we will apply here, to create the Serpi\'{n}\-ski triangle, is to use the substitution rule $\mu$ defined by 
\begin{equation}
\label{eq:DefOfMu}
\mu : 
\begin{array}{*{11}{c}}
	\mytikz{0.4}{\draw (0.000, 0.000) -- (1.000, 0.000) -- (0.500, 0.866) -- cycle;}
& \mapsto &
\mytikz{0.4}{
	\draw (0.000, 0.000) -- (2.000, 0.000) -- (1.000, 1.732) -- cycle;
	\draw[line join=round] (1.000, 0.000) -- (1.500, 0.866) -- (0.500, 0.866) -- cycle;
},
&&
\mytikz{0.4}{\draw (0.000, 0.866) -- (1.000, 0.866) -- (0.500, 0.000) -- cycle;}
& \mapsto &
\mytikz{0.4}{ 
	\draw (0.000, 1.732) -- (1.000, 0.000) -- (2.000, 1.732) -- cycle;
	\draw[line join=round] (0.500, 0.866) -- (1.500, 0.866) -- (1.000, 1.732) -- cycle;},
&&
\mytikz{0.4}{\filldraw (0.000, 0.000) -- (1.000, 0.000) -- (0.500, 0.866) -- cycle;}
& \mapsto &
\mytikz{0.4}{
	\draw (0.000, 0.000) -- (2.000, 0.000) -- (1.000, 1.732) -- cycle;
	\filldraw[line join=round] (0.000, 0.000) -- (1.000, 0.000) -- (0.500, 0.866) -- cycle;
	\filldraw[line join=round] (1.000, 0.000) -- (2.000, 0.000) -- (1.500, 0.866) -- cycle;
	\filldraw[line join=round] (0.500, 0.866) -- (1.500, 0.866) -- (1.000, 1.732) -- cycle;},
\end{array}
\end{equation}
and where $\mytikz{0.08}{\filldraw (0.000, 0.000) -- (4.000, 0.000) -- (2.000,3.464) -- cycle;}$ is taken as seed. See Figure~\ref{fig:FirstIterationsOfMu} for an illustration of the first iterations of $\mu$. Note that there are no filled downward oriented triangles with side length 1 in \eqref{eq:DefOfMu}, (downwards meaning that precisely one corner is at the bottom. Similarly, if a triangle has precisely one corner at the top, we say that it is oriented upwards). We denote by $T$ the limit structure obtained under iteration of $\mu$ on $\mytikz{0.08}{\filldraw (0.000, 0.000) -- (4.000, 0.000) -- (2.000,3.464) -- cycle;}$, and we call it the Sierpi\'{n}ski triangle.

In this paper we focus on the different triangular patterns (or subtriangles) that occur in $T$, and we prove the following theorem.

\begin{figure}[ht]
\centering\small
\begin{tabular}{*{5}{c}}
	\begin{tikzpicture}[scale = 0.3, baseline={([yshift=-.8ex]current bounding box.center)}]
\draw[ultra thin] (0.000, 0.000) -- (1.000, 0.000) -- (0.500, 0.866) -- cycle;
\fill[black] (0.500, 0.866) -- (0.000, 0.000) -- (1.000, 0.000) -- cycle;
\draw (0.500, -1.500) node{$T_0$};
\end{tikzpicture} & 
	\begin{tikzpicture}[scale = 0.3, baseline={([yshift=-.8ex]current bounding box.center)}]
\draw ( 0.000,  0.000) -- ( 2.000,  0.000) -- ( 1.000,  1.732) -- cycle;
\draw[ultra thin] ( 0.500,  0.866) -- ( 1.500,  0.866);
\draw[ultra thin] ( 0.500,  0.866) -- ( 1.000,  0.000);
\draw[ultra thin] ( 1.000,  0.000) -- ( 1.500,  0.866);
\fill[black] ( 1.000,  1.732) -- ( 0.500,  0.866) -- ( 1.500,  0.866) -- cycle;
\fill[black] ( 0.500,  0.866) -- ( 0.000,  0.000) -- ( 1.000,  0.000) -- cycle;
\fill[black] ( 1.500,  0.866) -- ( 1.000,  0.000) -- ( 2.000,  0.000) -- cycle;
\draw (1.000, -1.500) node{$T_1$};
\end{tikzpicture} & 
	\begin{tikzpicture}[scale = 0.3, baseline={([yshift=-.8ex]current bounding box.center)}]
\draw ( 0.000,  0.000) -- ( 4.000,  0.000) -- ( 2.000,  3.464) -- cycle;
\draw[ultra thin] ( 0.500,  0.866) -- ( 3.500,  0.866);
\draw[ultra thin] ( 1.000,  1.732) -- ( 3.000,  1.732);
\draw[ultra thin] ( 1.500,  2.598) -- ( 2.500,  2.598);
\draw[ultra thin] ( 0.500,  0.866) -- ( 1.000,  0.000);
\draw[ultra thin] ( 1.000,  1.732) -- ( 2.000,  0.000);
\draw[ultra thin] ( 1.500,  2.598) -- ( 3.000,  0.000);
\draw[ultra thin] ( 1.000,  0.000) -- ( 2.500,  2.598);
\draw[ultra thin] ( 2.000,  0.000) -- ( 3.000,  1.732);
\draw[ultra thin] ( 3.000,  0.000) -- ( 3.500,  0.866);
\fill[black] ( 2.000,  3.464) -- ( 1.500,  2.598) -- ( 2.500,  2.598) -- cycle;
\fill[black] ( 1.500,  2.598) -- ( 1.000,  1.732) -- ( 2.000,  1.732) -- cycle;
\fill[black] ( 2.500,  2.598) -- ( 2.000,  1.732) -- ( 3.000,  1.732) -- cycle;
\fill[black] ( 1.000,  1.732) -- ( 0.500,  0.866) -- ( 1.500,  0.866) -- cycle;
\fill[black] ( 3.000,  1.732) -- ( 2.500,  0.866) -- ( 3.500,  0.866) -- cycle;
\fill[black] ( 0.500,  0.866) -- ( 0.000,  0.000) -- ( 1.000,  0.000) -- cycle;
\fill[black] ( 1.500,  0.866) -- ( 1.000,  0.000) -- ( 2.000,  0.000) -- cycle;
\fill[black] ( 2.500,  0.866) -- ( 2.000,  0.000) -- ( 3.000,  0.000) -- cycle;
\fill[black] ( 3.500,  0.866) -- ( 3.000,  0.000) -- ( 4.000,  0.000) -- cycle;
\draw (2.000, -1.500) node{$T_2$};
\end{tikzpicture} &
	\begin{tikzpicture}[scale = 0.3, baseline={([yshift=-.8ex]current bounding box.center)}]
\draw ( 0.000,  0.000) -- ( 8.000,  0.000) -- ( 4.000,  6.928) -- cycle;
\draw[ultra thin] ( 0.500,  0.866) -- ( 7.500,  0.866);
\draw[ultra thin] ( 1.000,  1.732) -- ( 7.000,  1.732);
\draw[ultra thin] ( 1.500,  2.598) -- ( 6.500,  2.598);
\draw[ultra thin] ( 2.000,  3.464) -- ( 6.000,  3.464);
\draw[ultra thin] ( 2.500,  4.330) -- ( 5.500,  4.330);
\draw[ultra thin] ( 3.000,  5.196) -- ( 5.000,  5.196);
\draw[ultra thin] ( 3.500,  6.062) -- ( 4.500,  6.062);
\draw[ultra thin] ( 0.500,  0.866) -- ( 1.000,  0.000);
\draw[ultra thin] ( 1.000,  1.732) -- ( 2.000,  0.000);
\draw[ultra thin] ( 1.500,  2.598) -- ( 3.000,  0.000);
\draw[ultra thin] ( 2.000,  3.464) -- ( 4.000,  0.000);
\draw[ultra thin] ( 2.500,  4.330) -- ( 5.000,  0.000);
\draw[ultra thin] ( 3.000,  5.196) -- ( 6.000,  0.000);
\draw[ultra thin] ( 3.500,  6.062) -- ( 7.000,  0.000);
\draw[ultra thin] ( 1.000,  0.000) -- ( 4.500,  6.062);
\draw[ultra thin] ( 2.000,  0.000) -- ( 5.000,  5.196);
\draw[ultra thin] ( 3.000,  0.000) -- ( 5.500,  4.330);
\draw[ultra thin] ( 4.000,  0.000) -- ( 6.000,  3.464);
\draw[ultra thin] ( 5.000,  0.000) -- ( 6.500,  2.598);
\draw[ultra thin] ( 6.000,  0.000) -- ( 7.000,  1.732);
\draw[ultra thin] ( 7.000,  0.000) -- ( 7.500,  0.866);
\fill[black] ( 4.000,  6.928) -- ( 3.500,  6.062) -- ( 4.500,  6.062) -- cycle;
\fill[black] ( 3.500,  6.062) -- ( 3.000,  5.196) -- ( 4.000,  5.196) -- cycle;
\fill[black] ( 4.500,  6.062) -- ( 4.000,  5.196) -- ( 5.000,  5.196) -- cycle;
\fill[black] ( 3.000,  5.196) -- ( 2.500,  4.330) -- ( 3.500,  4.330) -- cycle;
\fill[black] ( 5.000,  5.196) -- ( 4.500,  4.330) -- ( 5.500,  4.330) -- cycle;
\fill[black] ( 2.500,  4.330) -- ( 2.000,  3.464) -- ( 3.000,  3.464) -- cycle;
\fill[black] ( 3.500,  4.330) -- ( 3.000,  3.464) -- ( 4.000,  3.464) -- cycle;
\fill[black] ( 4.500,  4.330) -- ( 4.000,  3.464) -- ( 5.000,  3.464) -- cycle;
\fill[black] ( 5.500,  4.330) -- ( 5.000,  3.464) -- ( 6.000,  3.464) -- cycle;
\fill[black] ( 2.000,  3.464) -- ( 1.500,  2.598) -- ( 2.500,  2.598) -- cycle;
\fill[black] ( 6.000,  3.464) -- ( 5.500,  2.598) -- ( 6.500,  2.598) -- cycle;
\fill[black] ( 1.500,  2.598) -- ( 1.000,  1.732) -- ( 2.000,  1.732) -- cycle;
\fill[black] ( 2.500,  2.598) -- ( 2.000,  1.732) -- ( 3.000,  1.732) -- cycle;
\fill[black] ( 5.500,  2.598) -- ( 5.000,  1.732) -- ( 6.000,  1.732) -- cycle;
\fill[black] ( 6.500,  2.598) -- ( 6.000,  1.732) -- ( 7.000,  1.732) -- cycle;
\fill[black] ( 1.000,  1.732) -- ( 0.500,  0.866) -- ( 1.500,  0.866) -- cycle;
\fill[black] ( 3.000,  1.732) -- ( 2.500,  0.866) -- ( 3.500,  0.866) -- cycle;
\fill[black] ( 5.000,  1.732) -- ( 4.500,  0.866) -- ( 5.500,  0.866) -- cycle;
\fill[black] ( 7.000,  1.732) -- ( 6.500,  0.866) -- ( 7.500,  0.866) -- cycle;
\fill[black] ( 0.500,  0.866) -- ( 0.000,  0.000) -- ( 1.000,  0.000) -- cycle;
\fill[black] ( 1.500,  0.866) -- ( 1.000,  0.000) -- ( 2.000,  0.000) -- cycle;
\fill[black] ( 2.500,  0.866) -- ( 2.000,  0.000) -- ( 3.000,  0.000) -- cycle;
\fill[black] ( 3.500,  0.866) -- ( 3.000,  0.000) -- ( 4.000,  0.000) -- cycle;
\fill[black] ( 4.500,  0.866) -- ( 4.000,  0.000) -- ( 5.000,  0.000) -- cycle;
\fill[black] ( 5.500,  0.866) -- ( 5.000,  0.000) -- ( 6.000,  0.000) -- cycle;
\fill[black] ( 6.500,  0.866) -- ( 6.000,  0.000) -- ( 7.000,  0.000) -- cycle;
\fill[black] ( 7.500,  0.866) -- ( 7.000,  0.000) -- ( 8.000,  0.000) -- cycle;
\draw (4.000, -1.500) node{$T_3$};
\end{tikzpicture} &
	\begin{tikzpicture}[scale = 0.3, baseline={([yshift=-.8ex]current bounding box.center)}]
\draw ( 0.000,  0.000) -- (16.000,  0.000) -- ( 8.000, 13.856) -- cycle;
\draw[ultra thin] ( 0.500,  0.866) -- (15.500,  0.866);
\draw[ultra thin] ( 1.000,  1.732) -- (15.000,  1.732);
\draw[ultra thin] ( 1.500,  2.598) -- (14.500,  2.598);
\draw[ultra thin] ( 2.000,  3.464) -- (14.000,  3.464);
\draw[ultra thin] ( 2.500,  4.330) -- (13.500,  4.330);
\draw[ultra thin] ( 3.000,  5.196) -- (13.000,  5.196);
\draw[ultra thin] ( 3.500,  6.062) -- (12.500,  6.062);
\draw[ultra thin] ( 4.000,  6.928) -- (12.000,  6.928);
\draw[ultra thin] ( 4.500,  7.794) -- (11.500,  7.794);
\draw[ultra thin] ( 5.000,  8.660) -- (11.000,  8.660);
\draw[ultra thin] ( 5.500,  9.526) -- (10.500,  9.526);
\draw[ultra thin] ( 6.000, 10.392) -- (10.000, 10.392);
\draw[ultra thin] ( 6.500, 11.258) -- ( 9.500, 11.258);
\draw[ultra thin] ( 7.000, 12.124) -- ( 9.000, 12.124);
\draw[ultra thin] ( 7.500, 12.990) -- ( 8.500, 12.990);
\draw[ultra thin] ( 0.500,  0.866) -- ( 1.000,  0.000);
\draw[ultra thin] ( 1.000,  1.732) -- ( 2.000,  0.000);
\draw[ultra thin] ( 1.500,  2.598) -- ( 3.000,  0.000);
\draw[ultra thin] ( 2.000,  3.464) -- ( 4.000,  0.000);
\draw[ultra thin] ( 2.500,  4.330) -- ( 5.000,  0.000);
\draw[ultra thin] ( 3.000,  5.196) -- ( 6.000,  0.000);
\draw[ultra thin] ( 3.500,  6.062) -- ( 7.000,  0.000);
\draw[ultra thin] ( 4.000,  6.928) -- ( 8.000,  0.000);
\draw[ultra thin] ( 4.500,  7.794) -- ( 9.000,  0.000);
\draw[ultra thin] ( 5.000,  8.660) -- (10.000,  0.000);
\draw[ultra thin] ( 5.500,  9.526) -- (11.000,  0.000);
\draw[ultra thin] ( 6.000, 10.392) -- (12.000,  0.000);
\draw[ultra thin] ( 6.500, 11.258) -- (13.000,  0.000);
\draw[ultra thin] ( 7.000, 12.124) -- (14.000,  0.000);
\draw[ultra thin] ( 7.500, 12.990) -- (15.000,  0.000);
\draw[ultra thin] ( 1.000,  0.000) -- ( 8.500, 12.990);
\draw[ultra thin] ( 2.000,  0.000) -- ( 9.000, 12.124);
\draw[ultra thin] ( 3.000,  0.000) -- ( 9.500, 11.258);
\draw[ultra thin] ( 4.000,  0.000) -- (10.000, 10.392);
\draw[ultra thin] ( 5.000,  0.000) -- (10.500,  9.526);
\draw[ultra thin] ( 6.000,  0.000) -- (11.000,  8.660);
\draw[ultra thin] ( 7.000,  0.000) -- (11.500,  7.794);
\draw[ultra thin] ( 8.000,  0.000) -- (12.000,  6.928);
\draw[ultra thin] ( 9.000,  0.000) -- (12.500,  6.062);
\draw[ultra thin] (10.000,  0.000) -- (13.000,  5.196);
\draw[ultra thin] (11.000,  0.000) -- (13.500,  4.330);
\draw[ultra thin] (12.000,  0.000) -- (14.000,  3.464);
\draw[ultra thin] (13.000,  0.000) -- (14.500,  2.598);
\draw[ultra thin] (14.000,  0.000) -- (15.000,  1.732);
\draw[ultra thin] (15.000,  0.000) -- (15.500,  0.866);
\fill[black] ( 8.000, 13.856) -- ( 7.500, 12.990) -- ( 8.500, 12.990) -- cycle;
\fill[black] ( 7.500, 12.990) -- ( 7.000, 12.124) -- ( 8.000, 12.124) -- cycle;
\fill[black] ( 8.500, 12.990) -- ( 8.000, 12.124) -- ( 9.000, 12.124) -- cycle;
\fill[black] ( 7.000, 12.124) -- ( 6.500, 11.258) -- ( 7.500, 11.258) -- cycle;
\fill[black] ( 9.000, 12.124) -- ( 8.500, 11.258) -- ( 9.500, 11.258) -- cycle;
\fill[black] ( 6.500, 11.258) -- ( 6.000, 10.392) -- ( 7.000, 10.392) -- cycle;
\fill[black] ( 7.500, 11.258) -- ( 7.000, 10.392) -- ( 8.000, 10.392) -- cycle;
\fill[black] ( 8.500, 11.258) -- ( 8.000, 10.392) -- ( 9.000, 10.392) -- cycle;
\fill[black] ( 9.500, 11.258) -- ( 9.000, 10.392) -- (10.000, 10.392) -- cycle;
\fill[black] ( 6.000, 10.392) -- ( 5.500,  9.526) -- ( 6.500,  9.526) -- cycle;
\fill[black] (10.000, 10.392) -- ( 9.500,  9.526) -- (10.500,  9.526) -- cycle;
\fill[black] ( 5.500,  9.526) -- ( 5.000,  8.660) -- ( 6.000,  8.660) -- cycle;
\fill[black] ( 6.500,  9.526) -- ( 6.000,  8.660) -- ( 7.000,  8.660) -- cycle;
\fill[black] ( 9.500,  9.526) -- ( 9.000,  8.660) -- (10.000,  8.660) -- cycle;
\fill[black] (10.500,  9.526) -- (10.000,  8.660) -- (11.000,  8.660) -- cycle;
\fill[black] ( 5.000,  8.660) -- ( 4.500,  7.794) -- ( 5.500,  7.794) -- cycle;
\fill[black] ( 7.000,  8.660) -- ( 6.500,  7.794) -- ( 7.500,  7.794) -- cycle;
\fill[black] ( 9.000,  8.660) -- ( 8.500,  7.794) -- ( 9.500,  7.794) -- cycle;
\fill[black] (11.000,  8.660) -- (10.500,  7.794) -- (11.500,  7.794) -- cycle;
\fill[black] ( 4.500,  7.794) -- ( 4.000,  6.928) -- ( 5.000,  6.928) -- cycle;
\fill[black] ( 5.500,  7.794) -- ( 5.000,  6.928) -- ( 6.000,  6.928) -- cycle;
\fill[black] ( 6.500,  7.794) -- ( 6.000,  6.928) -- ( 7.000,  6.928) -- cycle;
\fill[black] ( 7.500,  7.794) -- ( 7.000,  6.928) -- ( 8.000,  6.928) -- cycle;
\fill[black] ( 8.500,  7.794) -- ( 8.000,  6.928) -- ( 9.000,  6.928) -- cycle;
\fill[black] ( 9.500,  7.794) -- ( 9.000,  6.928) -- (10.000,  6.928) -- cycle;
\fill[black] (10.500,  7.794) -- (10.000,  6.928) -- (11.000,  6.928) -- cycle;
\fill[black] (11.500,  7.794) -- (11.000,  6.928) -- (12.000,  6.928) -- cycle;
\fill[black] ( 4.000,  6.928) -- ( 3.500,  6.062) -- ( 4.500,  6.062) -- cycle;
\fill[black] (12.000,  6.928) -- (11.500,  6.062) -- (12.500,  6.062) -- cycle;
\fill[black] ( 3.500,  6.062) -- ( 3.000,  5.196) -- ( 4.000,  5.196) -- cycle;
\fill[black] ( 4.500,  6.062) -- ( 4.000,  5.196) -- ( 5.000,  5.196) -- cycle;
\fill[black] (11.500,  6.062) -- (11.000,  5.196) -- (12.000,  5.196) -- cycle;
\fill[black] (12.500,  6.062) -- (12.000,  5.196) -- (13.000,  5.196) -- cycle;
\fill[black] ( 3.000,  5.196) -- ( 2.500,  4.330) -- ( 3.500,  4.330) -- cycle;
\fill[black] ( 5.000,  5.196) -- ( 4.500,  4.330) -- ( 5.500,  4.330) -- cycle;
\fill[black] (11.000,  5.196) -- (10.500,  4.330) -- (11.500,  4.330) -- cycle;
\fill[black] (13.000,  5.196) -- (12.500,  4.330) -- (13.500,  4.330) -- cycle;
\fill[black] ( 2.500,  4.330) -- ( 2.000,  3.464) -- ( 3.000,  3.464) -- cycle;
\fill[black] ( 3.500,  4.330) -- ( 3.000,  3.464) -- ( 4.000,  3.464) -- cycle;
\fill[black] ( 4.500,  4.330) -- ( 4.000,  3.464) -- ( 5.000,  3.464) -- cycle;
\fill[black] ( 5.500,  4.330) -- ( 5.000,  3.464) -- ( 6.000,  3.464) -- cycle;
\fill[black] (10.500,  4.330) -- (10.000,  3.464) -- (11.000,  3.464) -- cycle;
\fill[black] (11.500,  4.330) -- (11.000,  3.464) -- (12.000,  3.464) -- cycle;
\fill[black] (12.500,  4.330) -- (12.000,  3.464) -- (13.000,  3.464) -- cycle;
\fill[black] (13.500,  4.330) -- (13.000,  3.464) -- (14.000,  3.464) -- cycle;
\fill[black] ( 2.000,  3.464) -- ( 1.500,  2.598) -- ( 2.500,  2.598) -- cycle;
\fill[black] ( 6.000,  3.464) -- ( 5.500,  2.598) -- ( 6.500,  2.598) -- cycle;
\fill[black] (10.000,  3.464) -- ( 9.500,  2.598) -- (10.500,  2.598) -- cycle;
\fill[black] (14.000,  3.464) -- (13.500,  2.598) -- (14.500,  2.598) -- cycle;
\fill[black] ( 1.500,  2.598) -- ( 1.000,  1.732) -- ( 2.000,  1.732) -- cycle;
\fill[black] ( 2.500,  2.598) -- ( 2.000,  1.732) -- ( 3.000,  1.732) -- cycle;
\fill[black] ( 5.500,  2.598) -- ( 5.000,  1.732) -- ( 6.000,  1.732) -- cycle;
\fill[black] ( 6.500,  2.598) -- ( 6.000,  1.732) -- ( 7.000,  1.732) -- cycle;
\fill[black] ( 9.500,  2.598) -- ( 9.000,  1.732) -- (10.000,  1.732) -- cycle;
\fill[black] (10.500,  2.598) -- (10.000,  1.732) -- (11.000,  1.732) -- cycle;
\fill[black] (13.500,  2.598) -- (13.000,  1.732) -- (14.000,  1.732) -- cycle;
\fill[black] (14.500,  2.598) -- (14.000,  1.732) -- (15.000,  1.732) -- cycle;
\fill[black] ( 1.000,  1.732) -- ( 0.500,  0.866) -- ( 1.500,  0.866) -- cycle;
\fill[black] ( 3.000,  1.732) -- ( 2.500,  0.866) -- ( 3.500,  0.866) -- cycle;
\fill[black] ( 5.000,  1.732) -- ( 4.500,  0.866) -- ( 5.500,  0.866) -- cycle;
\fill[black] ( 7.000,  1.732) -- ( 6.500,  0.866) -- ( 7.500,  0.866) -- cycle;
\fill[black] ( 9.000,  1.732) -- ( 8.500,  0.866) -- ( 9.500,  0.866) -- cycle;
\fill[black] (11.000,  1.732) -- (10.500,  0.866) -- (11.500,  0.866) -- cycle;
\fill[black] (13.000,  1.732) -- (12.500,  0.866) -- (13.500,  0.866) -- cycle;
\fill[black] (15.000,  1.732) -- (14.500,  0.866) -- (15.500,  0.866) -- cycle;
\fill[black] ( 0.500,  0.866) -- ( 0.000,  0.000) -- ( 1.000,  0.000) -- cycle;
\fill[black] ( 1.500,  0.866) -- ( 1.000,  0.000) -- ( 2.000,  0.000) -- cycle;
\fill[black] ( 2.500,  0.866) -- ( 2.000,  0.000) -- ( 3.000,  0.000) -- cycle;
\fill[black] ( 3.500,  0.866) -- ( 3.000,  0.000) -- ( 4.000,  0.000) -- cycle;
\fill[black] ( 4.500,  0.866) -- ( 4.000,  0.000) -- ( 5.000,  0.000) -- cycle;
\fill[black] ( 5.500,  0.866) -- ( 5.000,  0.000) -- ( 6.000,  0.000) -- cycle;
\fill[black] ( 6.500,  0.866) -- ( 6.000,  0.000) -- ( 7.000,  0.000) -- cycle;
\fill[black] ( 7.500,  0.866) -- ( 7.000,  0.000) -- ( 8.000,  0.000) -- cycle;
\fill[black] ( 8.500,  0.866) -- ( 8.000,  0.000) -- ( 9.000,  0.000) -- cycle;
\fill[black] ( 9.500,  0.866) -- ( 9.000,  0.000) -- (10.000,  0.000) -- cycle;
\fill[black] (10.500,  0.866) -- (10.000,  0.000) -- (11.000,  0.000) -- cycle;
\fill[black] (11.500,  0.866) -- (11.000,  0.000) -- (12.000,  0.000) -- cycle;
\fill[black] (12.500,  0.866) -- (12.000,  0.000) -- (13.000,  0.000) -- cycle;
\fill[black] (13.500,  0.866) -- (13.000,  0.000) -- (14.000,  0.000) -- cycle;
\fill[black] (14.500,  0.866) -- (14.000,  0.000) -- (15.000,  0.000) -- cycle;
\fill[black] (15.500,  0.866) -- (15.000,  0.000) -- (16.000,  0.000) -- cycle;
\draw (8.000, -1.500) node{$T_4$};
\end{tikzpicture} 
\end{tabular}
\caption{The first iterations of the substitution $\mu$ on \mytikz{0.07}{\protect\filldraw (0.000, 0.000) -- (4.000, 0.000) -- (2.000,3.464) -- cycle;}, generating the Sierpi\'{n}ski triangle, where 
$T_n = \mu^n(\mytikz{0.07}{\protect\filldraw (0.000, 0.000) -- (4.000, 0.000) -- (2.000,3.464) -- cycle;})$.}
\label{fig:FirstIterationsOfMu}
\end{figure}

\begin{theorem}
\label{thm:main}
Let $A_n$ be the number of unique upwards oriented triangular patterns of side length $n$ that occur in the Sierpi\'{n}ski triangle $T$. Then 
\begin{equation}
	\label{eq:MainUpwards}
	A_n = 4n^2 - 6n + 4
\end{equation}
for $n \geq 1$. Similarly, let $A'_n$ be the number of downwards oriented triangular patterns of side length $n$ that occur in $T$. Then $A'_1=1$, and 
\begin{equation}
	\label{eq:MainDownwards}
	A'_n = n^2 - 3n + 4
\end{equation}
for $n \geq 2$. 
\end{theorem}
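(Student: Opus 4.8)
The plan is to coordinatise $T$ by Pascal's triangle modulo $2$. Reading the figure, every downward unit triangle of $T$ is white, while the upward unit triangles carry all the information; placing them in a grid with rows $r\ge 0$ and columns $0\le c\le r$, one checks from the substitution \eqref{eq:DefOfMu} that the upward triangle in position $(r,c)$ is black exactly when $\binom{r}{c}$ is odd. Thus an upward pattern of side $n$ is a $2$-colouring of the $\binom{n+1}{2}$ upward cells it contains, a downward pattern of side $n$ is a colouring of the $\binom{n}{2}$ upward cells it contains, and the governing local relation is the rule $b(r+1,c)=b(r,c-1)\oplus b(r,c)$, where $b(r,c)=\binom{r}{c}\bmod 2$.

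I would treat the downward case first, as a warm-up, since it is essentially one-dimensional. Inside a downward-pointing triangle the upward cells of each successive row are obtained from the row above by the XOR-rule, so the whole pattern is forced by its top row, a horizontal segment of length $n-1$; conversely the top row is part of the pattern. Hence $A'_n$ equals the number $L(n-1)$ of distinct horizontal factors of length $n-1$ occurring in the rows of $T$, and it remains to prove $L(m)=m^2-m+2$. For this I would exploit the self-similar row structure $w_{2^k+s}=w_s\,0^{\,2^k-1-s}\,w_s$ (two shifted copies of row $s$ separated by a block of zeros), split each length-$m$ factor into those lying inside a single copy and those straddling the central zero block, and solve the resulting recursion; the straddling factors — a suffix of $w_s$, a run of zeros, and a prefix of $w_s$ — are what produce the quadratic growth. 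The sporadic value $A'_1=1$ is just the single white cell.

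For the upward case I would use the geometric self-similarity directly: $T$ decomposes into three half-scale copies together with a central empty downward triangle, and iterating this gives a nested hierarchy of empty downward triangles of every size $2^t$. The key claim is that a side-$n$ upward window is determined by a bounded amount of data — essentially the size and relative position of the largest empty triangle it meets — which is consistent with the count $4n^2-6n+4$ being polynomial (only $O(\log n)$ bits of information). I would classify windows by how they straddle the central empty triangle of the smallest scale that contains them: such a window can be offset both horizontally and vertically, and this roughly independent two-sided freedom is what multiplies the downward-type $n^2$ count by $4$. Enumerating these straddling families, showing each occurs in $T$ and that distinct families give distinct patterns, and reconciling the boundary windows (those meeting an outer edge) then yields \eqref{eq:MainUpwards}; the base cases $n=1,2$ are checked directly against the $2^{\binom{n+1}{2}}$ a priori colourings.

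The main obstacle in both parts is the exact bookkeeping of the straddling and boundary patterns: proving simultaneously that every candidate pattern is actually realised somewhere in $T$ and that no two positions yield the same pattern. This is where the precise lower-order terms $-3n+4$ and $-6n+4$ originate, and it is considerably more delicate for the upward triangles, where the straddling is genuinely two-dimensional, than for the downward triangles, where it reduces to factor counting along a single row.
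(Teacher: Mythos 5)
Your coordinatisation of $T$ as Pascal's triangle modulo $2$ is correct, and the downward half of your plan is a genuinely different route from the paper's: since the rule $b(r+1,c)=b(r,c-1)\oplus b(r,c)$ propagates downward inside a downward window, such a window is indeed determined by its top row, so $A'_n$ equals the number $L(n-1)$ of horizontal factors of length $n-1$ in the rows, and the self-similar row structure $w_{2^k+s}=w_s\,0^{\,2^k-1-s}\,w_s$ is the right tool for counting them. This is more elementary than the paper's machinery and would be a nice proof of \eqref{eq:MainDownwards} --- but you stop short of giving it: $L(m)=m^2-m+2$ is asserted, and the recursion whose solution would establish it is described but never set up or solved, so all the content of the downward formula still sits in an unproved count.

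The genuine gap is the upward case, which is the bulk of the theorem. Your key claim --- that a side-$n$ upward window is determined by the size and relative position of the largest empty triangle it meets --- is neither precise nor proved, and it is not innocuous: it becomes true only after one shows that a window meeting a hole $H$ of size $2^t$ and no larger hole cannot leave the size-$2^{t+1}$ super-tile centred on $H$ (crossing that super-tile's inner edges or corners forces contact with a strictly larger hole, and crossing its outer edges is impossible because the window would then leave every ancestor super-tile), and one still needs separate arguments for windows touching the two infinite boundary edges of $T$ and for windows whose largest hole is smaller than the window itself. Even granting the claim, the enumeration that should produce $4n^2-6n+4$ --- listing the straddling families, showing each is realised in $T$, showing distinct families give distinct patterns, and correcting for boundary windows --- is entirely absent; ``roughly independent two-sided freedom'' multiplying $n^2$ by $4$ is a heuristic, not an argument, and note that the true relation is $A_n=A'_{2n}$ (a doubling of scale), which that heuristic does not predict. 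For contrast, the paper never proves any structure theorem for individual windows: it splits each pattern set by position parity into four sets $P_{r,c}$, proves intersection lemmas showing the over-count in $\sum_{(r,c)}|P_{r,c}|$ is exactly $6$ (Theorem~\ref{tmm:additivity}), reads off the coupled recursions \eqref{eq:recAB} and \eqref{eq:recAprim} for $A_n,B_n,C_n,D_n,A'_n$, and checks the closed forms by induction --- a uniform treatment of both orientations. To salvage your plan you would need either to execute the hole classification in full (expect a case analysis comparable to the paper's nine intersection lemmas) or to replace it, e.g.\ by an edge-word analogue of your top-row reduction; as written, the upward half is a plan, not a proof.
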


The value given by $A'_n$ in Theorem~\ref{thm:main} is also the maximal number of regions obtained when dividing the plane with $n-1$ circles, see sequences \texttt{A014206}, and \texttt{A386480}  in OEIS \cite{oeis}, and also \cite{parabola}. If there is a direct connection between this division of the plane and the patterns of $T$ is at the moment not known to the author. 

Example of similar results to Theorem~\ref{thm:main} are Allouche's result of the pattern complexity in paper-folding sequences \cite{allouche}, Nilsson's generalisation of the paper-folding structures into 2 dimensions \cite{nilssonPaperfoling}, and properties of the squiral tiling given in \cite{nilssonSquiral}.

The outline of this paper is as follows. In the next section we give definitions and provide some initial results. Thereafter, we turn to looking at properties of sets of patterns and give some tools for enumerating such sets. Succeeding this, in section~\ref{sec:recursions} we give a list of recursions describing the size of sets of patterns, and in the final section we tie everything up and prove Theorem~\ref{thm:main}.

\section{Preliminaries}

In this section we give basis notations and definitions, we will state and prove a couple of some initial results. 

Recall the definition of the substitution $\mu$ in \eqref{eq:DefOfMu}. We use the notation $\mu^n := \mu \circ \mu^{n-1}$ for $n\geq1$ and $\mu^{0} = Id$. An element, or structure, of the form $\mu^n(x)$ where 
$ 
x \in \{ 
\mytikz{0.08}{\trinone},
\mytikz{0.08}{\tridown},
\mytikz{0.08}{\filldraw (0.000, 0.000) -- (4.000, 0.000) -- (2.000,3.464) -- cycle;}
\}
$ is called a \emph{super-tile}. We define the special super-tiles $T_n$ by $T_n := \mu^n(\mytikz{0.08}{\filldraw (0.000, 0.000) -- (4.000, 0.000) -- (2.000,3.464) -- cycle;})$ for $n\geq 0$. From the definition of $\mu$ in \eqref{eq:DefOfMu} we see that $T_n$ can be given as a recursive block substitution 
\begin{equation}
\label{eq:DefTnBlock}
T_{n+1} 
= \mu^{n+1}(\mytikz{0.08}{\filldraw (0.000, 0.000) -- (4.000, 0.000) -- (2.000,3.464) -- cycle;})
= \mytikz{1.6}{
	\draw (0.000, 0.000) -- (2.000, 0.000) -- (1.000, 1.732) -- cycle;
	\draw[line join=round] (1.000, 0.000) -- (1.500, 0.866) -- (0.500, 0.866) -- cycle;
	\draw(0.5,0.289) node{\small$T_{n}$};
	\draw(1.5,0.289) node{\small$T_{n}$};
	\draw(1.0,1.155) node{\small$T_{n}$};
	\draw(1.0,0.866) node[below]{\small$\mu^n($\begin{tikzpicture}[scale=0.07] \tridown \end{tikzpicture}$)$};
}
\end{equation}
See Figure~\ref{fig:FirstIterationsOfMu} for a visualisation of the first $T_n$s. The limit of the sequence of the $T_n$s the Sierpi\'{n}ski triangle and we denote it by $T$, as also already mentioned in the introduction. 

By the notion \emph{pattern} we shall mean a triangular region without holes that occur somewhere in $T$. We also say that $T$ is a pattern (an infinite one). Note that a we do not cut the triangles of unit side length to create a pattern. Hence, any finite pattern is an equilateral triangle. A pattern with $n$ rows of unit triangles is said to be of size $n$. We consider two main kind of patterns, the patterns that have precisely one corner at the top and those with precisely one at the bottom. We say the former are upwards oriented and the latter downwards oriented. 

The unit length triangles in a pattern $x$ can be indexed via a pair $(r,c)$ where $r$ is the row counted from the top starting with 0, and $c$ the column counted from the left starting with 0. We allow us to loosen the definition of patterns to also include an equilateral region where we cut off one unit length triangle at one or more corners. The unit length triangles in such a pattern are indexed in the same way as in a pattern with all corners uncut. That is, if we cut off the top triangle in an upwards oriented pattern $p$ then the first row in $p$ will have row-number 1. Note that we say that a pattern is of size $n$ even if we have cut off a corner, see Figure~\ref{fig:indexingpatterns}.

\begin{figure}[ht]
\centering
\begin{tabular}{ccc}
\begin{tikzpicture}[scale = 0.65, baseline={([yshift=-.8ex]current bounding box.center)}]
\draw (0.000, 0.000) -- (5.000, 0.000) -- (2.500, 4.330) -- cycle;
\draw (0.500, 0.866) -- (1.000, 0.000);
\draw (3.000, 3.464) -- (1.000, 0.000);
\draw (0.500, 0.866) -- (4.500, 0.866);
\draw (1.000, 1.732) -- (2.000, 0.000);
\draw (3.500, 2.598) -- (2.000, 0.000);
\draw (1.000, 1.732) -- (4.000, 1.732);
\draw (1.500, 2.598) -- (3.000, 0.000);
\draw (4.000, 1.732) -- (3.000, 0.000);
\draw (1.500, 2.598) -- (3.500, 2.598);
\draw (2.000, 3.464) -- (4.000, 0.000);
\draw (4.500, 0.866) -- (4.000, 0.000);
\draw (2.000, 3.464) -- (3.000, 3.464);
\draw (2.500, 3.811) node{\footnotesize$0$};
\draw (2.000, 2.944) node{\footnotesize$0$};
\draw (2.500, 3.118) node{\footnotesize$1$};
\draw (3.000, 2.944) node{\footnotesize$2$};
\draw (1.500, 2.078) node{\footnotesize$0$};
\draw (2.000, 2.252) node{\footnotesize$1$};
\draw (2.500, 2.078) node{\footnotesize$2$};
\draw (3.000, 2.252) node{\footnotesize$3$};
\draw (3.500, 2.078) node{\footnotesize$4$};
\draw (1.000, 1.212) node{\footnotesize$0$};
\draw (1.500, 1.386) node{\footnotesize$1$};
\draw (2.000, 1.212) node{\footnotesize$2$};
\draw (2.500, 1.386) node{\footnotesize$3$};
\draw (3.000, 1.212) node{\footnotesize$4$};
\draw (3.500, 1.386) node{\footnotesize$5$};
\draw (4.000, 1.212) node{\footnotesize$6$};
\draw (0.500, 0.346) node{\footnotesize$0$};
\draw (1.000, 0.520) node{\footnotesize$1$};
\draw (1.500, 0.346) node{\footnotesize$2$};
\draw (2.000, 0.520) node{\footnotesize$3$};
\draw (2.500, 0.346) node{\footnotesize$4$};
\draw (3.000, 0.520) node{\footnotesize$5$};
\draw (3.500, 0.346) node{\footnotesize$6$};
\draw (4.000, 0.520) node{\footnotesize$7$};
\draw (4.500, 0.346) node{\footnotesize$8$};
\draw (-0.500, 0.433) node{\footnotesize$4$};
\draw (0.000, 1.299) node{\footnotesize$3$};
\draw (0.500, 2.165) node{\footnotesize$2$};
\draw (1.000, 3.031) node{\footnotesize$1$};
\draw (1.500, 3.897) node{\footnotesize$0$};
\draw (2.500, -1.000) node{\small(a)};
\end{tikzpicture}
&
\begin{tikzpicture}[scale = 0.65, baseline={([yshift=-.8ex]current bounding box.center)}]
\draw (0.000, 4.330) -- (5.000, 4.330) -- (2.500, 0.000) -- cycle;
\draw (1.000, 4.330) -- (3.000, 0.866);
\draw (1.000, 4.330) -- (0.500, 3.464);
\draw (2.000, 0.866) -- (3.000, 0.866);
\draw (2.000, 4.330) -- (3.500, 1.732);
\draw (2.000, 4.330) -- (1.000, 2.598);
\draw (1.500, 1.732) -- (3.500, 1.732);
\draw (3.000, 4.330) -- (4.000, 2.598);
\draw (3.000, 4.330) -- (1.500, 1.732);
\draw (1.000, 2.598) -- (4.000, 2.598);
\draw (4.000, 4.330) -- (4.500, 3.464);
\draw (4.000, 4.330) -- (2.000, 0.866);
\draw (0.500, 3.464) -- (4.500, 3.464);
\draw (2.500, 0.520) node{\footnotesize$0$};
\draw (2.000, 1.386) node{\footnotesize$0$};
\draw (2.500, 1.212) node{\footnotesize$1$};
\draw (3.000, 1.386) node{\footnotesize$2$};
\draw (1.500, 2.252) node{\footnotesize$0$};
\draw (2.000, 2.078) node{\footnotesize$1$};
\draw (2.500, 2.252) node{\footnotesize$2$};
\draw (3.000, 2.078) node{\footnotesize$3$};
\draw (3.500, 2.252) node{\footnotesize$4$};
\draw (1.000, 3.118) node{\footnotesize$0$};
\draw (1.500, 2.944) node{\footnotesize$1$};
\draw (2.000, 3.118) node{\footnotesize$2$};
\draw (2.500, 2.944) node{\footnotesize$3$};
\draw (3.000, 3.118) node{\footnotesize$4$};
\draw (3.500, 2.944) node{\footnotesize$5$};
\draw (4.000, 3.118) node{\footnotesize$6$};
\draw (0.500, 3.984) node{\footnotesize$0$};
\draw (1.000, 3.811) node{\footnotesize$1$};
\draw (1.500, 3.984) node{\footnotesize$2$};
\draw (2.000, 3.811) node{\footnotesize$3$};
\draw (2.500, 3.984) node{\footnotesize$4$};
\draw (3.000, 3.811) node{\footnotesize$5$};
\draw (3.500, 3.984) node{\footnotesize$6$};
\draw (4.000, 3.811) node{\footnotesize$7$};
\draw (4.500, 3.984) node{\footnotesize$8$};
\draw (-0.500, 3.897) node{\footnotesize$0$};
\draw (0.000, 3.031) node{\footnotesize$1$};
\draw (0.500, 2.165) node{\footnotesize$2$};
\draw (1.000, 1.299) node{\footnotesize$3$};
\draw (1.500, 0.433) node{\footnotesize$4$};
\draw (2.500, -1.000) node{\small(b)};
\end{tikzpicture}
&
\begin{tikzpicture}[scale = 0.65, baseline={([yshift=-.8ex]current bounding box.center)}]
\draw (0.500, 0.866) -- (2.000, 3.464) -- (3.000, 3.464) -- (4.500, 0.866) -- (4.000, 0.000) -- (1.000, 0.000) -- cycle;
\draw (3.000, 3.464) -- (1.000, 0.000);
\draw (0.500, 0.866) -- (4.500, 0.866);
\draw (1.000, 1.732) -- (2.000, 0.000);
\draw (3.500, 2.598) -- (2.000, 0.000);
\draw (1.000, 1.732) -- (4.000, 1.732);
\draw (1.500, 2.598) -- (3.000, 0.000);
\draw (4.000, 1.732) -- (3.000, 0.000);
\draw (1.500, 2.598) -- (3.500, 2.598);
\draw (2.000, 3.464) -- (4.000, 0.000);
\draw (2.000, 2.944) node{\footnotesize$0$};
\draw (2.500, 3.118) node{\footnotesize$1$};
\draw (3.000, 2.944) node{\footnotesize$2$};
\draw (1.500, 2.078) node{\footnotesize$0$};
\draw (2.000, 2.252) node{\footnotesize$1$};
\draw (2.500, 2.078) node{\footnotesize$2$};
\draw (3.000, 2.252) node{\footnotesize$3$};
\draw (3.500, 2.078) node{\footnotesize$4$};
\draw (1.000, 1.212) node{\footnotesize$0$};
\draw (1.500, 1.386) node{\footnotesize$1$};
\draw (2.000, 1.212) node{\footnotesize$2$};
\draw (2.500, 1.386) node{\footnotesize$3$};
\draw (3.000, 1.212) node{\footnotesize$4$};
\draw (3.500, 1.386) node{\footnotesize$5$};
\draw (4.000, 1.212) node{\footnotesize$6$};
\draw (1.000, 0.520) node{\footnotesize$1$};
\draw (1.500, 0.346) node{\footnotesize$2$};
\draw (2.000, 0.520) node{\footnotesize$3$};
\draw (2.500, 0.346) node{\footnotesize$4$};
\draw (3.000, 0.520) node{\footnotesize$5$};
\draw (3.500, 0.346) node{\footnotesize$6$};
\draw (4.000, 0.520) node{\footnotesize$7$};
\draw (-0.500, 0.433) node{\footnotesize$4$};
\draw (0.000, 1.299) node{\footnotesize$3$};
\draw (0.500, 2.165) node{\footnotesize$2$};
\draw (1.000, 3.031) node{\footnotesize$1$};
\draw (2.500, -1.000) node{\small(c)};
\clip (0.000, 0.000) rectangle (5.000, 4.330);
\end{tikzpicture}
\end{tabular}

\caption{Indexing unit length triangles in patterns of size 5. (a) Indexing an upwards oriented pattern. (b) Indexing a downwards oriented pattern. (c) Indexing an upwards oriented pattern where corners are cut off.}
\label{fig:indexingpatterns}
\end{figure}
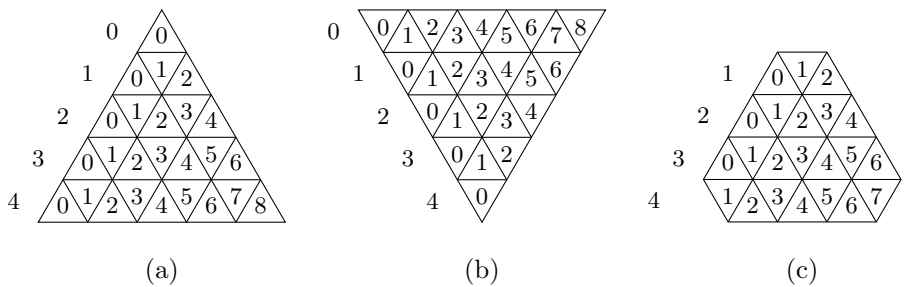

Let $x$ be a pattern. Then the notation $u:=x[\alpha,r,c,n]$ denotes the sub\-pattern $u$ of $x$ that has its top row and leftmost column at row $r$ and column $c$ in $x$, is of size $n$, and where
\begin{equation}
\label{eq:defOfTypes}
\alpha \in \mathcal{A} := \left\{ 
\mytikz{0.1}{\trinone}, 
\mytikz{0.1}{\tritop},
\mytikz{0.1}{\trilower},
\mytikz{0.1}{\triall},
\mytikz{0.1}{\tridown} \,
\right\}
\end{equation}
symbolises the kind of subpattern we are considering; upwards or downwards oriented, and with or without cut corners. See Figure~\ref{figure:defofsubpattern}. Note that we can not define a subpattern for all indexes $(r,c)$; for example if $x$ is an upward oriented pattern we can not define an upward oriented subpattern $u$ of $x$ for an odd $c$.

\begin{figure}[ht]
\centering
\begin{tikzpicture}[scale = 0.65, baseline={([yshift=-.8ex]current bounding box.center)}]
\fill[gray!40] (1.500, 0.866) -- (2.500, 2.598) -- (3.500, 2.598) -- (4.500, 0.866) --cycle;
\draw (0.000, 0.000) -- (5.000, 0.000) -- (2.500, 4.330) -- cycle;
\draw[thin] (0.500, 0.866) -- (4.500, 0.866);
\draw[thin] (1.000, 1.732) -- (4.000, 1.732);
\draw[thin] (1.500, 2.598) -- (3.500, 2.598);
\draw[thin] (2.000, 3.464) -- (3.000, 3.464);
\draw[thin] (0.500, 0.866) -- (1.000, 0.000);
\draw[thin] (1.000, 1.732) -- (2.000, 0.000);
\draw[thin] (1.500, 2.598) -- (3.000, 0.000);
\draw[thin] (2.000, 3.464) -- (4.000, 0.000);
\draw[thin] (1.000, 0.000) -- (3.000, 3.464);
\draw[thin] (2.000, 0.000) -- (3.500, 2.598);
\draw[thin] (3.000, 0.000) -- (4.000, 1.732);
\draw[thin] (4.000, 0.000) -- (4.500, 0.866);
\draw (2.500, 3.811) node{\small$0$};
\draw (2.000, 2.944) node{\small$0$};
\draw (2.500, 3.118) node{\small$1$};
\draw (3.000, 2.944) node{\small$2$};
\draw (1.500, 2.078) node{\small$0$};
\draw (2.000, 2.252) node{\small$1$};
\draw (2.500, 2.078) node{\small$2$};
\draw (3.000, 2.252) node{\small$3$};
\draw (3.500, 2.078) node{\small$4$};
\draw (1.000, 1.212) node{\small$0$};
\draw (1.500, 1.386) node{\small$1$};
\draw (2.000, 1.212) node{\small$2$};
\draw (2.500, 1.386) node{\small$3$};
\draw (3.000, 1.212) node{\small$4$};
\draw (3.500, 1.386) node{\small$5$};
\draw (4.000, 1.212) node{\small$6$};
\draw (0.500, 0.346) node{\small$0$};
\draw (1.000, 0.520) node{\small$1$};
\draw (1.500, 0.346) node{\small$2$};
\draw (2.000, 0.520) node{\small$3$};
\draw (2.500, 0.346) node{\small$4$};
\draw (3.000, 0.520) node{\small$5$};
\draw (3.500, 0.346) node{\small$6$};
\draw (4.000, 0.520) node{\small$7$};
\draw (4.500, 0.346) node{\small$8$};
\draw (1.500, 3.897) node{\small$0$};
\draw (1.000, 3.031) node{\small$1$};
\draw (0.500, 2.165) node{\small$2$};
\draw (0.000, 1.299) node{\small$3$};
\draw (-0.500, 0.433) node{\small$4$};
\draw (2.500, -0.750) node{$x$};
\node (A) at (4.000, 2.000) {};
\node (B) at (5.000, 2.598) [right] {$x[\mytikz{0.08}{\tritop},1,2,3]$};
\draw[<-, thick] (A) to[out=30, in=185] (B);
\end{tikzpicture}
\caption{A subpattern $u$ (gray shaded) of a pattern $x$ is denoted by its starting row, leftmost column, and size, that is; $u = x[\mytikz{0.07}{\protect\tritop},1,2,3]$. (Note that the row count includes the top triangle of $u$ that has been cut off.)} 
\label{figure:defofsubpattern}
\end{figure}
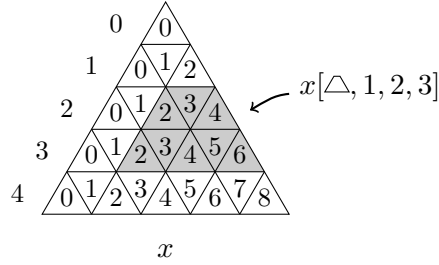

For $n\geq 1$ define 
\begin{equation}
\label{eq:defOfP}
	P(\alpha, T, n) := \left\{ T[\alpha, r,c,n]: r,c \in\mathbb{N}\right\}
\end{equation}
to be the set of all possible patterns of size $n$ and of type $\alpha \in \mathcal{A}$ that occurs somewhere in $T$ (and of course where such pattern is possible to define). From \eqref{eq:DefTnBlock} we now have.

\begin{lemma}
	\label{lemma:inclusion}
	Let $n\geq 0$. Then $T_n \in P(\mytikz{0.08}{\trinone}, T_{n+1}, 2^n)$.\hfill$\square$ 
\end{lemma}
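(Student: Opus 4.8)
The plan is to obtain the membership directly from the recursive block decomposition \eqref{eq:DefTnBlock}, so that the proof is a one-line observation rather than an induction or a computation. By the definition \eqref{eq:defOfP}, to show $T_n \in P(\mytikz{0.08}{\trinone}, T_{n+1}, 2^n)$ it suffices to produce a single admissible pair $(r,c)$ with $T_{n+1}[\mytikz{0.08}{\trinone}, r, c, 2^n] = T_n$; that is, to locate one uncut, upwards oriented, side-$2^n$ copy of $T_n$ inside $T_{n+1}$.

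First I would record the scaling built into $\mu$: each application of the substitution in \eqref{eq:DefOfMu} doubles every side length, so $T_n = \mu^n(\mytikz{0.08}{\filldraw (0.000, 0.000) -- (4.000, 0.000) -- (2.000,3.464) -- cycle;})$ is an upwards oriented equilateral triangle of side length $2^n$ in unit triangles, which is exactly the side length demanded by the target set. Then I would read off \eqref{eq:DefTnBlock}, which displays $T_{n+1}$ as three verbatim copies of $T_n$ sitting in the top, bottom-left and bottom-right positions, together with the central region $\mu^n(\mytikz{0.08}{\tridown})$. Any one of these three corner copies is an uncut upwards oriented triangle of side $2^n$ that equals $T_n$, hence is an element of the form $T_{n+1}[\mytikz{0.08}{\trinone}, r, c, 2^n]$.

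What remains is the bookkeeping of the indices, which is routine rather than a genuine obstacle. Counting rows from the top beginning at $0$, the top copy has its apex at the apex of $T_{n+1}$, so its top row and leftmost column are both $0$; this gives $T_n = T_{n+1}[\mytikz{0.08}{\trinone}, 0, 0, 2^n]$, while the bottom-left copy yields the equally valid $T_{n+1}[\mytikz{0.08}{\trinone}, 2^n, 0, 2^n]$. Either identification establishes the claim for every $n \geq 0$, the case $n = 0$ reducing to the observation that the three corner blocks of $T_1$ are just the filled unit triangles $T_0$. I do not expect any hard step: the entire content is already packaged in \eqref{eq:DefTnBlock}, and the lemma is simply the statement that one of its corner blocks is an admissible subpattern of $T_{n+1}$.
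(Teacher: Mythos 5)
Your proof is correct and takes essentially the same route as the paper: the paper offers no written proof at all, stating the lemma as an immediate consequence of the block decomposition \eqref{eq:DefTnBlock}, which is precisely what you spell out (identifying a corner copy of $T_n$, e.g.\ at indices $(0,0)$, as an admissible upwards oriented subpattern of side $2^n$). Your extra bookkeeping of indices and the scaling of side lengths is harmless elaboration of the same observation.
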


The Lemma~\ref{lemma:inclusion} shows that the chain of nested sets of subpatterns,
\[
   P(\mytikz{0.08}{\trinone}, T_{0}, m) \subseteq \cdots \subseteq 
   P(\mytikz{0.08}{\trinone}, T_{n}, m) \subseteq 
   P(\mytikz{0.08}{\trinone}, T_{n+1}, m ) \subseteq \cdots, 
\]
is monotonic including in $n$, (if $m \leq 2^n$). Next, we show that when the sets are non-empty the chain is strictly monotonic including until all possible subpatterns are contained. 

\begin{lemma}
\label{lemma:uniqueplateau}
Let $m\geq1$. If there is an $n\geq0$ such that $P(\mytikz{0.08}{\trinone}, T_{n}, m)$ is non-empty and 
\begin{equation}
	\label{eq:plateauassumption}
	P(\mytikz{0.08}{\trinone}, T_{n}, m) = P(\mytikz{0.08}{\trinone}, T_{n+1}, m)
\end{equation}
then 
\begin{equation}
	\label{eq:plateauresult}
	P(\mytikz{0.08}{\trinone}, T_{n}, m) = P(\mytikz{0.08}{\trinone}, T_{n+k}, m)
\end{equation}
for all integers $k \geq 1$, and in particular $P(\mytikz{0.08}{\trinone}, T_{n}, m) = P(\mytikz{0.08}{\trinone}, T, m)$. 
\end{lemma}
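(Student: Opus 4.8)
The plan is to abbreviate $Q_k := P(\mytikz{0.08}{\trinone}, T_k, m)$ and to prove the single \emph{persistence step}: if $Q_j = Q_{j+1}$ with $Q_j \neq \emptyset$ (which forces $m \le 2^j$, since a size-$m$ pattern only fits in $T_j$ when $2^j \ge m$), then $Q_{j+1} = Q_{j+2}$. Applying this repeatedly from $j=n$ upwards yields $Q_n = Q_{n+k}$ for all $k \ge 1$, which is \eqref{eq:plateauresult}. The final clause $Q_n = P(\mytikz{0.08}{\trinone}, T, m)$ then follows because every finite pattern of $T$ already occurs in some $T_N$ and conversely every pattern of $T_N$ occurs in $T$, so $P(\mytikz{0.08}{\trinone}, T, m) = \bigcup_k Q_k$; by the monotone chain displayed after Lemma~\ref{lemma:inclusion} this nested union is attained at the plateau. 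For the persistence step only $Q_{j+2} \subseteq Q_{j+1}$ requires work, as $Q_{j+1} \subseteq Q_{j+2}$ is again given by that chain (here $m \le 2^j \le 2^{j+1}$).

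The engine is the block identity \eqref{eq:DefTnBlock}: write $\Sigma(X)$ for the assembly of three corner copies of $X$ around a central empty downward triangle $\mu^{k}(\mytikz{0.08}{\tridown})$ of the same size, so that $T_{k+1} = \Sigma(T_k)$ for every $k \ge 0$. I then split the size-$m$ upward patterns of $T_{k+1}$ into those contained in a single corner copy, which contribute precisely $Q_k$, and the \emph{straddling} patterns, those meeting the central empty triangle; writing $S_k$ for the straddling set gives $Q_{k+1} = Q_k \cup S_k$. (A pattern meeting two corner copies must cross one of the three vertices where the copies touch, and the tip cell of the central empty triangle lies between them, so it is indeed straddling.) Under this decomposition the target $Q_{j+2} \subseteq Q_{j+1}$ reduces to the single inclusion $S_{j+1} \subseteq Q_{j+1}$.

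To establish $S_{j+1} \subseteq Q_{j+1}$ I would take a straddling pattern $p$ of $T_{j+2} = \Sigma(T_{j+1})$. Since $p$ has side length $m \le 2^j$ and contains a cell of the central empty triangle, every cell of $p$ lies within distance $m$ of that cell; hence $p$ is confined to a depth-$m$ collar of a single corner copy $T_{j+1}$ along the edge it shares with the centre (or lies entirely in the white centre). The cells of $p$ inside that copy form a boundary sub-pattern of $T_{j+1}$ of one of the admissible types \eqref{eq:defOfTypes}, and the remaining cells are white. Here the hypothesis enters: inside $T_{j+1} = \Sigma(T_j)$ there is a copy of $T_j$ facing its own central empty triangle $E_j$, which is again white and of size $2^j \ge m$; because $Q_j = Q_{j+1}$, the relevant boundary sub-patterns of $T_j$ coincide with those of $T_{j+1}$, so the same configuration of copy-cells glued to white-cells recurs inside $T_{j+1}$. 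Therefore $p$ already occurs in $T_{j+1}$, i.e.\ $p \in Q_{j+1}$.

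The main obstacle is exactly this last matching. It requires (i) a precise description of how far, and in what cut-corner shape, a size-$m$ upward triangle can protrude across each of the three edges and three corner vertices of the central empty triangle, so that the inner part of $p$ is correctly classified among the five types of $\mathcal{A}$; (ii) transferring these boundary sub-patterns from $T_{j+1}$ down to $T_j$, where the coincidence is furnished by $Q_j = Q_{j+1}$ \emph{together with} the analogous equalities for the other types $\alpha \in \mathcal{A}$ that the later sections supply; and (iii) a separate, direct treatment of the all-white pattern and of the three corner vertices where two copies meet across the tip of the centre, since there $p$ sees two collars at once. Verifying that no straddling pattern can reach deeper than a single corner copy permits, and that the white surround in $E_j$ is always ample enough to reproduce $p$, is where the genuine effort lies.
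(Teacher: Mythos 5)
Your decomposition $Q_{k+1} = Q_k \cup S_k$ (writing $Q_k := P(\mytikz{0.08}{\trinone}, T_k, m)$ and $S_k$ for the straddling patterns), the reduction of the persistence step to $S_{j+1} \subseteq Q_{j+1}$, and the passage from the plateau to $P(\mytikz{0.08}{\trinone}, T, m)$ are all sound. The gap is in your proof of $S_{j+1}\subseteq Q_{j+1}$, and it sits exactly where the whole difficulty of the lemma lives. Your gluing argument needs a \emph{position-specific} statement: every pattern of size $\le m$ occurring \emph{flush against the edge} of $T_{j+1}$ that faces the white centre $E_{j+1}$ must also occur flush against the corresponding edge of $T_j$, so that the white triangle $E_j$ inside $T_{j+1}$ can supply the white cells of the straddling pattern. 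But the hypothesis $Q_j = Q_{j+1}$ is position-free: it only guarantees that such a pattern occurs \emph{somewhere} in $T_j$ --- possibly in the interior, or against an edge with no white region beside it --- and nothing in your argument converts ``occurs somewhere'' into ``occurs flush against the edge facing a white region.'' Your fallback, that the ``analogous equalities for the other types $\alpha\in\mathcal{A}$'' are supplied by later sections, fails twice over: Sections 3 and 4 prove intersection and recursion formulas for $P(\alpha,T,n)$ and $P_{r,c}(\alpha,T,n)$ on the infinite structure and contain no plateau equality of the form $P(\alpha,T_j,m)=P(\alpha,T_{j+1},m)$; and those sections rest on enumerations (Example~\ref{ex:patterncount} and the initial values in Tables~\ref{table:initialvaluesABCD} and~\ref{table:initialvaluesAprim}) that are justified by this very lemma, so the appeal would be circular. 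Even granting position-free equalities for every $\alpha\in\mathcal{A}$ would not close the argument, for the same flush-position reason; and, a smaller point, the part of a vertex-straddling pattern inside a corner copy is in general a trapezoid with more than a unit corner removed, so it is not of any type in $\mathcal{A}$ at all.

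The idea you are missing is desubstitution: the paper works one level down through $\mu$ instead of at the same scale through the block decomposition, and thereby avoids all interface analysis. Given $a \in Q_{n+p+1}$, since $T_{n+p+1} = \mu(T_{n+p})$ and $\mu$ doubles side lengths, the region occupied by $a$ pulls back to a region spanning at most $\lceil m/2\rceil + 1 \le m$ rows of $T_{n+p}$; hence $a$ is a subpattern of $\mu(b)$ for some $b \in Q_{n+p}$ --- and $b$ is a full upward pattern of size $m$, exactly the kind of object the hypothesis governs. The induction hypothesis places $b$ in $T_{n+p-1}$, so $\mu(b)$, and with it $a$, occurs in $\mu(T_{n+p-1}) = T_{n+p}$, giving $Q_{n+p+1}\subseteq Q_{n+p}$; monotonicity (Lemma~\ref{lemma:inclusion}) upgrades this to equality and the induction closes. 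To rescue your route you would instead have to strengthen the induction hypothesis to a plateau statement about flush boundary configurations of all the shapes that can arise along the centre --- a considerably heavier statement than the lemma itself.
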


\begin{proof}
We give a proof by induction on $k$ in \eqref{eq:plateauresult}. The basis case, $k = 1$, is direct from the assumption \eqref{eq:plateauassumption}. Assume for induction that \eqref{eq:plateauresult} holds for $1\leq k \leq p$. 
	
For the induction step, $k = p+1$, take a pattern $a \in P(\mytikz{0.08}{\trinone}, T_{n+p+1}, m)$. Then there is a pattern $b \in P(\mytikz{0.08}{\trinone}, T_{n+p}, m)$ such that $a$ is a subpattern of $\mu(b)$. By the induction assumption we have that $b \in P(\mytikz{0.08}{\trinone},T_{n+p-1}, m)$. This implies
\[
	a \in P(\mytikz{0.08}{\trinone},\mu(b), m) \subseteq P(\mytikz{0.08}{\trinone}, T_{n+p}, m).
\]
Therefore $P(\mytikz{0.08}{\trinone}, T_{n+p}, m) \supseteq P(\mytikz{0.08}{\trinone}, T_{n+p+1}, m)$, and by Lemma~\ref{lemma:inclusion} it follows that 
\[
	P(\mytikz{0.08}{\trinone}, T_{n+p}, m) = P(\mytikz{0.08}{\trinone}, T_{n+p+1}, m),
\]
which completes the induction. 
\end{proof}

\begin{example}
\label{ex:patterncount}
By inspection, we find
\[
	P(\mytikz{0.08}{\trinone}, T_3, 2) = P(\mytikz{0.08}{\trinone}, T_4, 2),
\]
with $|P(\mytikz{0.08}{\trinone},T_2, 2)| = 8$. Lemma~\ref{lemma:uniqueplateau} now implies that 
\[
	P(\mytikz{0.08}{\trinone}, T_3, 2) = P(\mytikz{0.08}{\trinone}, T, 2),
\]
so we can find all upwards oriented patterns of size 2 in $T$ by just looking at patterns in $T_3$. In the same way, continuing the enumeration and applying Lemma~\ref{lemma:uniqueplateau}, we find 
\[
	P(\mytikz{0.08}{\trinone}, T_4, 4) = P(\mytikz{0.08}{\trinone}, T_5, 4) = P(\mytikz{0.08}{\trinone}, T, 4),
\]
with $|P(\mytikz{0.08}{\trinone}, T, 4)| = 44$. As a consequence, we clearly also have $P(\mytikz{0.08}{\trinone}, T_4, 3) = P(\mytikz{0.08}{\trinone}, T, 3)$ without any further enumerations. This because $T_4$ contains all patterns of size 4, and therefore it must also contain all patterns of size 3. \hfill$\diamond$
\end{example}

The above Lemma~\ref{lemma:uniqueplateau}, and as seen in Example~\ref{ex:patterncount}, give us a way to find the sets of patterns of a given size that occurs in $T$. See Figure~\ref{fig:AllUpPatterns2}, Figure~\ref{fig:AllUpPatterns3}, Figure~\ref{fig:AllDownPatterns2} and Figure~\ref{fig:AllDownPatterns3} for lists of patterns of small sizes.

\begin{figure}[ht]
\centering
\begin{tabular}{*{8}{c}}
\begin{tikzpicture}[scale = 0.3, baseline={([yshift=-.8ex]current bounding box.center)}]
\clip (-0.2,-2.0) rectangle (2.2, 2.2);
\draw (0.000, 0.000) -- (2.000, 0.000) -- (1.000, 1.732) -- cycle;
\draw[ultra thin] (0.500, 0.866) -- (1.500, 0.866);
\draw[ultra thin] (0.500, 0.866) -- (1.000, 0.000);
\draw[ultra thin] (1.000, 0.000) -- (1.500, 0.866);
\draw (1,-1) node{\footnotesize 1};
\end{tikzpicture}
&
\begin{tikzpicture}[scale = 0.3, baseline={([yshift=-.8ex]current bounding box.center)}]
\clip (-0.2,-2.0) rectangle (2.2, 2.2);
\draw (0.000, 0.000) -- (2.000, 0.000) -- (1.000, 1.732) -- cycle;
\draw[ultra thin] (0.500, 0.866) -- (1.500, 0.866);
\draw[ultra thin] (0.500, 0.866) -- (1.000, 0.000);
\draw[ultra thin] (1.000, 0.000) -- (1.500, 0.866);
\fill[black] (1.500, 0.866) -- (1.000, 0.000) -- (2.000, 0.000) -- cycle;
\draw (1,-1) node{\footnotesize 2};
\end{tikzpicture}
&
\begin{tikzpicture}[scale = 0.3, baseline={([yshift=-.8ex]current bounding box.center)}]
\clip (-0.2,-2.0) rectangle (2.2, 2.2);
\draw (0.000, 0.000) -- (2.000, 0.000) -- (1.000, 1.732) -- cycle;
\draw[ultra thin] (0.500, 0.866) -- (1.500, 0.866);
\draw[ultra thin] (0.500, 0.866) -- (1.000, 0.000);
\draw[ultra thin] (1.000, 0.000) -- (1.500, 0.866);
\fill[black] (0.500, 0.866) -- (0.000, 0.000) -- (1.000, 0.000) -- cycle;
\draw (1,-1) node{\footnotesize 3};
\end{tikzpicture}
&
\begin{tikzpicture}[scale = 0.3, baseline={([yshift=-.8ex]current bounding box.center)}]
\clip (-0.2,-2.0) rectangle (2.2, 2.2);
\draw (0.000, 0.000) -- (2.000, 0.000) -- (1.000, 1.732) -- cycle;
\draw[ultra thin] (0.500, 0.866) -- (1.500, 0.866);
\draw[ultra thin] (0.500, 0.866) -- (1.000, 0.000);
\draw[ultra thin] (1.000, 0.000) -- (1.500, 0.866);
\fill[black] (0.500, 0.866) -- (0.000, 0.000) -- (1.000, 0.000) -- cycle;
\fill[black] (1.500, 0.866) -- (1.000, 0.000) -- (2.000, 0.000) -- cycle;
\draw (1,-1) node{\footnotesize 4};
\end{tikzpicture}
&
\begin{tikzpicture}[scale = 0.3, baseline={([yshift=-.8ex]current bounding box.center)}]
\clip (-0.2,-2.0) rectangle (2.2, 2.2);
\draw (0.000, 0.000) -- (2.000, 0.000) -- (1.000, 1.732) -- cycle;
\draw[ultra thin] (0.500, 0.866) -- (1.500, 0.866);
\draw[ultra thin] (0.500, 0.866) -- (1.000, 0.000);
\draw[ultra thin] (1.000, 0.000) -- (1.500, 0.866);
\fill[black] (1.000, 1.732) -- (0.500, 0.866) -- (1.500, 0.866) -- cycle;
\draw (1,-1) node{\footnotesize 5};
\end{tikzpicture}
&
\begin{tikzpicture}[scale = 0.3, baseline={([yshift=-.8ex]current bounding box.center)}]
\clip (-0.2,-2.0) rectangle (2.2, 2.2);
\draw (0.000, 0.000) -- (2.000, 0.000) -- (1.000, 1.732) -- cycle;
\draw[ultra thin] (0.500, 0.866) -- (1.500, 0.866);
\draw[ultra thin] (0.500, 0.866) -- (1.000, 0.000);
\draw[ultra thin] (1.000, 0.000) -- (1.500, 0.866);
\fill[black] (1.000, 1.732) -- (0.500, 0.866) -- (1.500, 0.866) -- cycle;
\fill[black] (1.500, 0.866) -- (1.000, 0.000) -- (2.000, 0.000) -- cycle;
\draw (1,-1) node{\footnotesize 6};
\end{tikzpicture}
&
\begin{tikzpicture}[scale = 0.3, baseline={([yshift=-.8ex]current bounding box.center)}]
\clip (-0.2,-2.0) rectangle (2.2, 2.2);
\draw (0.000, 0.000) -- (2.000, 0.000) -- (1.000, 1.732) -- cycle;
\draw[ultra thin] (0.500, 0.866) -- (1.500, 0.866);
\draw[ultra thin] (0.500, 0.866) -- (1.000, 0.000);
\draw[ultra thin] (1.000, 0.000) -- (1.500, 0.866);
\fill[black] (1.000, 1.732) -- (0.500, 0.866) -- (1.500, 0.866) -- cycle;
\fill[black] (0.500, 0.866) -- (0.000, 0.000) -- (1.000, 0.000) -- cycle;
\draw (1,-1) node{\footnotesize 7};
\end{tikzpicture}
&
\begin{tikzpicture}[scale = 0.3, baseline={([yshift=-.8ex]current bounding box.center)}]
\clip (-0.2,-2.0) rectangle (2.2, 2.2);
\draw (0.000, 0.000) -- (2.000, 0.000) -- (1.000, 1.732) -- cycle;
\draw[ultra thin] (0.500, 0.866) -- (1.500, 0.866);
\draw[ultra thin] (0.500, 0.866) -- (1.000, 0.000);
\draw[ultra thin] (1.000, 0.000) -- (1.500, 0.866);
\fill[black] (1.000, 1.732) -- (0.500, 0.866) -- (1.500, 0.866) -- cycle;
\fill[black] (0.500, 0.866) -- (0.000, 0.000) -- (1.000, 0.000) -- cycle;
\fill[black] (1.500, 0.866) -- (1.000, 0.000) -- (2.000, 0.000) -- cycle;
\draw (1,-1) node{\footnotesize 8};
\end{tikzpicture}
\end{tabular}
\caption{The 8 different upwards oriented patterns of size 2, that is, the elements of $P(\mytikz{0.07}{\protect\trinone},T,2)$.} 
\label{fig:AllUpPatterns2}
\end{figure}

\begin{figure}[ht]
\centering
\input{Allpatterns3.txt}
\caption{The 22 different upwards oriented patterns of size 3, that is, the elements of $P(\mytikz{0.07}{\protect\trinone},T,3)$.} 
\label{fig:AllUpPatterns3}
\end{figure}

\begin{figure}[ht]
\centering
\begin{tabular}{*{8}{c}}
\begin{tikzpicture}[scale = 0.3, baseline={([yshift=-.8ex]current bounding box.center)}]
\clip (-0.2,-2.0) rectangle (2.2, 2.2);
\draw (0.000, 1.732) -- (1.000, 0.000) -- (2.000, 1.732) -- cycle;
\draw[ultra thin] (0.500, 0.866) -- (1.500, 0.866);
\draw[ultra thin] (0.500, 0.866) -- (1.000, 1.732);
\draw[ultra thin] (1.000, 1.732) -- (1.500, 0.866);
\draw (1,-1) node{\footnotesize 1};
\end{tikzpicture}
&
\begin{tikzpicture}[scale = 0.3, baseline={([yshift=-.8ex]current bounding box.center)}]
\clip (-0.2,-2.0) rectangle (2.2, 2.2);
\draw (0.000, 1.732) -- (1.000, 0.000) -- (2.000, 1.732) -- cycle;
\draw[ultra thin] (0.500, 0.866) -- (1.500, 0.866);
\draw[ultra thin] (0.500, 0.866) -- (1.000, 1.732);
\draw[ultra thin] (1.000, 1.732) -- (1.500, 0.866);
\fill[black] (1.000, 1.732) -- (0.500, 0.866) -- (1.500, 0.866) -- cycle;
\draw (1,-1) node{\footnotesize 2};
\end{tikzpicture}
\end{tabular}
\caption{The 2 different downwards oriented patterns of size 2, that is, the elements of $P(\mytikz{0.07}{\protect\tridown},T,2)$.}
\label{fig:AllDownPatterns2} 
\end{figure}

\begin{figure}[ht]
\centering
\begin{tabular}{*{8}{c}}
\begin{tikzpicture}[scale = 0.3, baseline={([yshift=-.8ex]current bounding box.center)}]
\clip (-0.2,-2.0) rectangle (3.2, 3.2);
\draw (0.000, 2.598) -- (1.500, 0.000) -- (3.000, 2.598) -- cycle;
\draw[ultra thin] (0.500, 1.732) -- (2.500, 1.732);
\draw[ultra thin] (1.000, 0.866) -- (2.000, 0.866);
\draw[ultra thin] (0.500, 1.732) -- (1.000, 2.598);
\draw[ultra thin] (1.000, 0.866) -- (2.000, 2.598);
\draw[ultra thin] (1.000, 2.598) -- (2.000, 0.866);
\draw[ultra thin] (2.000, 2.598) -- (2.500, 1.732);
\draw (1.5,-1) node{\footnotesize 1};
\end{tikzpicture}
&
\begin{tikzpicture}[scale = 0.3, baseline={([yshift=-.8ex]current bounding box.center)}]
\clip (-0.2,-2.0) rectangle (3.2, 3.2);
\draw (0.000, 2.598) -- (1.500, 0.000) -- (3.000, 2.598) -- cycle;
\draw[ultra thin] (0.500, 1.732) -- (2.500, 1.732);
\draw[ultra thin] (1.000, 0.866) -- (2.000, 0.866);
\draw[ultra thin] (0.500, 1.732) -- (1.000, 2.598);
\draw[ultra thin] (1.000, 0.866) -- (2.000, 2.598);
\draw[ultra thin] (1.000, 2.598) -- (2.000, 0.866);
\draw[ultra thin] (2.000, 2.598) -- (2.500, 1.732);
\fill[black] (2.000, 2.598) -- (1.500, 1.732) -- (2.500, 1.732) -- cycle;
\fill[black] (1.500, 1.732) -- (1.000, 0.866) -- (2.000, 0.866) -- cycle;
\draw (1.5,-1) node{\footnotesize 2};
\end{tikzpicture}
&
\begin{tikzpicture}[scale = 0.3, baseline={([yshift=-.8ex]current bounding box.center)}]
\clip (-0.2,-2.0) rectangle (3.2, 3.2);
\draw (0.000, 2.598) -- (1.500, 0.000) -- (3.000, 2.598) -- cycle;
\draw[ultra thin] (0.500, 1.732) -- (2.500, 1.732);
\draw[ultra thin] (1.000, 0.866) -- (2.000, 0.866);
\draw[ultra thin] (0.500, 1.732) -- (1.000, 2.598);
\draw[ultra thin] (1.000, 0.866) -- (2.000, 2.598);
\draw[ultra thin] (1.000, 2.598) -- (2.000, 0.866);
\draw[ultra thin] (2.000, 2.598) -- (2.500, 1.732);
\fill[black] (1.000, 2.598) -- (0.500, 1.732) -- (1.500, 1.732) -- cycle;
\fill[black] (1.500, 1.732) -- (1.000, 0.866) -- (2.000, 0.866) -- cycle;
\draw (1.5,-1) node{\footnotesize 3};
\end{tikzpicture}
&
\begin{tikzpicture}[scale = 0.3, baseline={([yshift=-.8ex]current bounding box.center)}]
\clip (-0.2,-2.0) rectangle (3.2, 3.2);
\draw (0.000, 2.598) -- (1.500, 0.000) -- (3.000, 2.598) -- cycle;
\draw[ultra thin] (0.500, 1.732) -- (2.500, 1.732);
\draw[ultra thin] (1.000, 0.866) -- (2.000, 0.866);
\draw[ultra thin] (0.500, 1.732) -- (1.000, 2.598);
\draw[ultra thin] (1.000, 0.866) -- (2.000, 2.598);
\draw[ultra thin] (1.000, 2.598) -- (2.000, 0.866);
\draw[ultra thin] (2.000, 2.598) -- (2.500, 1.732);
\fill[black] (1.000, 2.598) -- (0.500, 1.732) -- (1.500, 1.732) -- cycle;
\fill[black] (2.000, 2.598) -- (1.500, 1.732) -- (2.500, 1.732) -- cycle;
\draw (1.5,-1) node{\footnotesize 4};
\end{tikzpicture}
\end{tabular}
\caption{The 4 different downwards oriented patterns of size 3, that is, the elements of $P(\mytikz{0.07}{\protect\tridown},T,3)$.} 
\label{fig:AllDownPatterns3}
\end{figure}

\section{Intersections}

In this section we discuss properties of sets of patterns. Recall the definition of the particular sets of patterns given in \eqref{eq:defOfP}. These sets can be split into subsets (not necessarily disjoint) depending on their pattern's position in the underlying structure of super-tiles of size 2. Let us start by defining a set of indices (pair of integers) by 
\[	
	I : = \{ (0,0), (1,0), (1,2), (2,2) \}.
\]
For indices $(r,c) \in I$, and $n\geq2$ we introduce the following set of patterns 
\begin{equation}
\label{eq:defofPrc}
	P_{r,c}(\alpha, T, n) 
	:= \big\{ \mu(x)[\alpha, r,c, n] : x \in P(\alpha,T, n) \big\},
\end{equation}
where $\alpha\in\mathcal{A}$ is the type of pattern, see \eqref{eq:defOfTypes}. The definition in \eqref{eq:defofPrc} can be extend to further indices via
\begin{equation}
\label{eq:defofPrcExtension}
	P_{r+2s,c+4t}(\alpha, T, n) := P_{r,c}(\alpha, T, n), 
\end{equation}
for $s,t\in \mathbb{N}$. The above definitions lead to 
\begin{equation}
\label{eq:PequalsPrc}
	P(\alpha, T, n) = \bigcup_{(r,c) \in I } P_{r,c}(\alpha, T, n).
\end{equation}
The sets on the right-hand side of \eqref{eq:PequalsPrc} are not pairwise disjoint, as we will show in the following theorem, which is also the main result of this section.

\begin{theorem}
\label{tmm:additivity}
Let $n\geq 2$, and $\alpha\in\mathcal{A}$. Then 
\begin{equation}
\label{eq:additivity}
	\left|P(\alpha, T,n)\right| = - 6 + \sum_{(r,c) \in I} \left|P_{r,c}(\alpha, T,n)\right|.
\end{equation}
\end{theorem}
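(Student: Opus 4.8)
The identity \eqref{eq:additivity} is exactly an inclusion–exclusion count for the four-fold union \eqref{eq:PequalsPrc}, so my plan is to reduce it to a statement about overlaps and then pin those overlaps down. Writing $\nu(p)$ for the number of indices $(r,c)\in I$ with $p\in P_{r,c}(\alpha,T,n)$, we have $\sum_{(r,c)\in I}|P_{r,c}(\alpha,T,n)|-|P(\alpha,T,n)|=\sum_{p\in P(\alpha,T,n)}(\nu(p)-1)$. Thus \eqref{eq:additivity} is equivalent to the assertion that this total excess multiplicity equals $6$, uniformly for every $n\ge2$ and every $\alpha\in\mathcal{A}$. Since $\nu(p)\in\{1,2,3,4\}$, it is enough to single out the finitely many patterns that occur at more than one offset and verify that their excesses add up to $6$; the $\binom{4}{2}=6$ pairs of indices make $6$ a natural target.

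First I would establish the geometric meaning of the sets. Using that $T$ is the fixed point $\mu(T)=T$ together with Lemma~\ref{lemma:uniqueplateau}, one checks that $P_{r,c}(\alpha,T,n)$ is precisely the collection of size-$n$ patterns of type $\alpha$ whose top-left cell can sit at position $(r,c)$ within the grid of size-$2$ super-tiles described by \eqref{eq:DefTnBlock}, the periodicity \eqref{eq:defofPrcExtension} reflecting that only the residue of the position modulo $(2,4)$ matters. Under this reading $p\in P_{r,c}\cap P_{r',c'}$ says that $p$ is realizable at two distinct super-tile offsets, and \eqref{eq:PequalsPrc} merely records that every pattern sits at one of the four offsets $(0,0),(1,0),(1,2),(2,2)$.

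The core of the argument, and the step I expect to be hardest, is the classification of these offset-ambiguous patterns. The block substitution forces a rigid local structure along the seams between adjacent super-tiles — for a filled unit the three corner cells are filled and the central downward cell is empty, while an empty unit produces four empty cells — and for a generic pattern these seams are visible inside $p$, so the offset is determined and $\nu(p)=1$. I would make this precise by writing each map $x\mapsto\mu(x)[\alpha,r,c,n]$ explicitly at the level of filled upward cells, and then show that consistency with two different offsets forces $p$ into a short list of degenerate configurations: the empty triangle (which fits the large holes of $T$ at several offsets) together with a small family of patterns supported near the three edges and corners. The real work is proving this list complete and checking that it does not change as $n$ grows, since enlarging $n$ only adds interior cells, where the seams are always detectable.

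Finally I would compute. Fixing each type $\alpha$ in turn, I would determine the pairwise intersections $P_{r,c}\cap P_{r',c'}$ explicitly, confirm that the triple and quadruple intersections are either empty or already counted, and feed the results into inclusion–exclusion; the outcome should be a net correction of $6$ in every case. A base verification at $n=2$ against the explicit lists in Figure~\ref{fig:AllUpPatterns2} and Figure~\ref{fig:AllDownPatterns3} anchors the count, and the $n$-independence established in the previous step then upgrades it to all $n\ge2$.
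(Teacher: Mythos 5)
Your proposal takes essentially the same route as the paper: its proof likewise reduces \eqref{eq:additivity} to a classification of the patterns lying in more than one $P_{r,c}(\alpha,T,n)$ (Lemma~\ref{lemma:instersectionEven}--Lemma~\ref{lemma:instersectionOddDown}), finding that the all-unfilled pattern lies in all four sets (excess $3$) and that exactly three of the six pairwise intersections contain one further pattern, lying in exactly two sets (excess $3$), whence the correction $-6$. The only difference is in how the classification is established: where you propose a seam-rigidity argument plus an $n$-independence step, the paper proves each intersection lemma by induction on the size, with enumerated base cases, the induction step being precisely the rigorous form of your claim that the super-tile seams remain detectable as $n$ grows.
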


The proof of Theorem~\ref{tmm:additivity} is based on several lemmas, given below. In order to state the results of the lemmas we need to introduce a list of notation for special types of patterns. 

\begin{itemize}
\item Let
\[
\mytikz{0.2}{\trinone}_{n}
:= \underbrace{
\mytikz{0.3}{
\draw (3.500, 0.000) -- (0.000, 0.000) -- (1.750, 3.031) (2.250, 3.897) -- (3.000, 5.196) -- (4.250, 3.031) (4.750, 2.165) -- (6.000, 0.000) -- (4.500, 0.000);
\draw[dotted] (3.500, 0.000) -- (4.500, 0.000);
\draw[dotted] (1.750, 3.031) -- (2.250, 3.897);
\draw[dotted] (4.250, 3.031) -- (4.750, 2.165);
\draw[ultra thin] (0.500, 0.866) -- (3.000, 0.866) (4.000, 0.866) -- (5.500, 0.866); 
\draw[ultra thin] (1.000, 1.732) -- (2.500, 1.732) (4.500, 1.732) -- (5.000, 1.732);
\draw[ultra thin] (1.500, 2.598) -- (2.000, 2.598);
\draw[ultra thin] (2.500, 3.464) -- (4.000, 3.464);
\draw[ultra thin] (2.500, 4.330) -- (3.500, 4.330);
\draw[ultra thin] (0.500, 0.866) -- (1.000, 0.000);
\draw[ultra thin] (1.000, 1.732) -- (2.000, 0.000);
\draw[ultra thin] (1.500, 2.598) -- (3.000, 0.000);
\draw[ultra thin] (2.500, 4.330) -- (3.250, 3.031) (4.250, 1.299) -- (5.000, 0.000);
\draw[ultra thin] (1.000, 0.000) -- (2.250, 2.165) (2.750, 3.031) -- (3.500, 4.330);
\draw[ultra thin] (2.000, 0.000) -- (2.750, 1.299) (3.750, 3.031) -- (4.000, 3.464);
\draw[ultra thin] (3.000, 0.000) -- (3.250, 0.433); 
\draw[ultra thin] (4.250, 0.433) -- (5.000, 1.732);
\draw[ultra thin] (5.000, 0.000) -- (5.500, 0.866);
\clip (0.000, -0.500) rectangle (6.000, 5.196);
}}_{n}
\]
represent an upwards oriented pattern of size $n$ consisting of unfilled unit triangles.

\item Let
\[
\mytikz{0.2}{\trinone \fill (0.000, 0.000) -- (1.000, 0.000) -- (0.500, 0.866) -- cycle;}_{n}
:= \underbrace{
\mytikz{0.3}{
\draw (3.500, 0.000) -- (0.000, 0.000) -- (1.750, 3.031) (2.250, 3.897) -- (3.000, 5.196) -- (4.250, 3.031) (4.750, 2.165) -- (6.000, 0.000) -- (4.500, 0.000);
\draw[dotted] (3.500, 0.000) -- (4.500, 0.000);
\draw[dotted] (1.750, 3.031) -- (2.250, 3.897);
\draw[dotted] (4.250, 3.031) -- (4.750, 2.165);
\draw[ultra thin] (0.500, 0.866) -- (3.000, 0.866) (4.000, 0.866) -- (5.500, 0.866); 
\draw[ultra thin] (1.000, 1.732) -- (2.500, 1.732) (4.500, 1.732) -- (5.000, 1.732);
\draw[ultra thin] (1.500, 2.598) -- (2.000, 2.598);
\draw[ultra thin] (2.500, 3.464) -- (4.000, 3.464);
\draw[ultra thin] (2.500, 4.330) -- (3.500, 4.330);
\draw[ultra thin] (0.500, 0.866) -- (1.000, 0.000);
\draw[ultra thin] (1.000, 1.732) -- (2.000, 0.000);
\draw[ultra thin] (1.500, 2.598) -- (3.000, 0.000);
\draw[ultra thin] (2.500, 4.330) -- (3.250, 3.031) (4.250, 1.299) -- (5.000, 0.000);
\draw[ultra thin] (1.000, 0.000) -- (2.250, 2.165) (2.750, 3.031) -- (3.500, 4.330);
\draw[ultra thin] (2.000, 0.000) -- (2.750, 1.299) (3.750, 3.031) -- (4.000, 3.464);
\draw[ultra thin] (3.000, 0.000) -- (3.250, 0.433); 
\draw[ultra thin] (4.250, 0.433) -- (5.000, 1.732);
\draw[ultra thin] (5.000, 0.000) -- (5.500, 0.866);
\fill[black] (0.500, 0.866) -- (0.000, 0.000) -- (1.000, 0.000) -- cycle;
\clip (0.000, -0.500) rectangle (6.000, 5.196);
}}_{n}
\]
represent an upwards oriented pattern of size $n$ consisting of unfilled unit triangles, except that the marked corner contains one filled unit triangle. The notation extends to marking one or more corners.

\item Let 
\[
\mytikz{0.2}{\draw (2.000, 3.464) -- (0.500, 0.866) -- (1.000, 0.000) -- (4.000, 0.000) -- cycle;}_{n} 
:= \underbrace{
\mytikz{0.3}{
\draw (3.500, 0.000) -- (1.000, 0.000) -- (0.500, 0.866) -- (1.750, 3.031) (2.250, 3.897) -- (3.000, 5.196) -- (4.250, 3.031) (4.750, 2.165) -- (6.000, 0.000) -- (4.500, 0.000);
\draw[dotted] (3.500, 0.000) -- (4.500, 0.000);
\draw[dotted] (1.750, 3.031) -- (2.250, 3.897);
\draw[dotted] (4.250, 3.031) -- (4.750, 2.165);
\draw[ultra thin] (0.500, 0.866) -- (3.000, 0.866) (4.000, 0.866) -- (5.500, 0.866); 
\draw[ultra thin] (1.000, 1.732) -- (2.500, 1.732) (4.500, 1.732) -- (5.000, 1.732);
\draw[ultra thin] (1.500, 2.598) -- (2.000, 2.598);
\draw[ultra thin] (2.500, 3.464) -- (4.000, 3.464);
\draw[ultra thin] (2.500, 4.330) -- (3.500, 4.330);
\draw[ultra thin] (1.000, 1.732) -- (2.000, 0.000);
\draw[ultra thin] (1.500, 2.598) -- (3.000, 0.000);
\draw[ultra thin] (2.500, 4.330) -- (3.250, 3.031) (4.250, 1.299) -- (5.000, 0.000);
\draw[ultra thin] (1.000, 0.000) -- (2.250, 2.165) (2.750, 3.031) -- (3.500, 4.330);
\draw[ultra thin] (2.000, 0.000) -- (2.750, 1.299) (3.750, 3.031) -- (4.000, 3.464);
\draw[ultra thin] (3.000, 0.000) -- (3.250, 0.433); 
\draw[ultra thin] (4.250, 0.433) -- (5.000, 1.732);
\draw[ultra thin] (5.000, 0.000) -- (5.500, 0.866);
\clip (0.000, -0.500) rectangle (6.000, 5.196);
}}_{n}
\]
represent an upwards oriented pattern of size $n$ consisting of unfilled unit triangles, and at the indicated corner one triangle of unit length has be cut off. The notation extends to cutting off one or more corners.

\item Let 
\[
\mytikz{0.2}{
\draw (2.000, 3.464) -- (0.500, 0.866) -- (1.000, 0.000) -- (4.000, 0.000) -- cycle;
\fill (0.500, 0.866) -- (1.000, 0.000) -- (1.500, 0.000) -- (0.750, 1.299) -- cycle;
}_{n} 
:= \underbrace{
\mytikz{0.3}{
\draw (3.500, 0.000) -- (1.000, 0.000) -- (0.500, 0.866) -- (1.750, 3.031) (2.250, 3.897) -- (3.000, 5.196) -- (4.250, 3.031) (4.750, 2.165) -- (6.000, 0.000) -- (4.500, 0.000);
\draw[dotted] (3.500, 0.000) -- (4.500, 0.000);
\draw[dotted] (1.750, 3.031) -- (2.250, 3.897);
\draw[dotted] (4.250, 3.031) -- (4.750, 2.165);
\draw[ultra thin] (0.500, 0.866) -- (3.000, 0.866) (4.000, 0.866) -- (5.500, 0.866); 
\draw[ultra thin] (1.000, 1.732) -- (2.500, 1.732) (4.500, 1.732) -- (5.000, 1.732);
\draw[ultra thin] (1.500, 2.598) -- (2.000, 2.598);
\draw[ultra thin] (2.500, 3.464) -- (4.000, 3.464);
\draw[ultra thin] (2.500, 4.330) -- (3.500, 4.330);
\draw[ultra thin] (1.000, 1.732) -- (2.000, 0.000);
\draw[ultra thin] (1.500, 2.598) -- (3.000, 0.000);
\draw[ultra thin] (2.500, 4.330) -- (3.250, 3.031) (4.250, 1.299) -- (5.000, 0.000);
\draw[ultra thin] (1.000, 0.000) -- (2.250, 2.165) (2.750, 3.031) -- (3.500, 4.330);
\draw[ultra thin] (2.000, 0.000) -- (2.750, 1.299) (3.750, 3.031) -- (4.000, 3.464);
\draw[ultra thin] (3.000, 0.000) -- (3.250, 0.433); 
\draw[ultra thin] (4.250, 0.433) -- (5.000, 1.732);
\draw[ultra thin] (5.000, 0.000) -- (5.500, 0.866);
\fill[black] (1.000, 1.732) -- (0.500, 0.866) -- (1.500, 0.866) -- cycle;
\fill[black] (1.500, 0.866) -- (1.000, 0.000) -- (2.000, 0.000) -- cycle;
\clip (0.000, -0.500) rectangle (6.000, 5.196);
}}_{n}
\]
represent an upwards oriented pattern of size $n$ consisting of unfilled unit triangles and at the indicated corner one triangle of unit length has be cut of and the two upwards oriented unit triangles next to the cut are filled. The notation of cutting off and marking extends to one or more corners.

\item Let
\[
\mytikz{0.2}{
\tridown
}_{n} 
:= \overbrace{
\mytikz{0.3}{
\draw (3.500, 5.196) -- (0.000, 5.196) -- (1.750, 2.165) (2.250, 1.299) -- (3.000, 0.000) -- (4.250, 2.165) (4.750, 3.031) -- (6.000, 5.196) -- (4.500, 5.196);
\draw[dotted] (3.500, 5.196) -- (4.500, 5.196);
\draw[dotted] (1.750, 2.165) -- (2.250, 1.299);
\draw[dotted] (4.250, 2.165) -- (4.750, 3.031);
\draw[ultra thin] (0.500, 4.339) -- (3.000, 4.339) (4.000, 4.339) -- (5.500, 4.339); 
\draw[ultra thin] (1.000, 3.464) -- (2.500, 3.464) (4.500, 3.464) -- (5.000, 3.464);
\draw[ultra thin] (1.500, 2.598) -- (2.000, 2.598);
\draw[ultra thin] (2.500, 1.732) -- (4.000, 1.732);
\draw[ultra thin] (2.500, 0.866) -- (3.500, 0.866);
\draw[ultra thin] (0.500, 4.330) -- (1.000, 5.196);
\draw[ultra thin] (1.000, 3.464) -- (2.000, 5.196);
\draw[ultra thin] (1.500, 2.598) -- (3.000, 5.196);
\draw[ultra thin] (2.500, 0.866) -- (3.250, 2.165) (4.250, 3.897) -- (5.000, 5.196);
\draw[ultra thin] (1.000, 5.196) -- (2.250, 3.031) (2.750, 2.165) -- (3.500, 0.866);
\draw[ultra thin] (2.000, 5.196) -- (2.750, 3.897) (3.750, 2.165) -- (4.000, 1.732);
\draw[ultra thin] (3.000, 5.196) -- (3.250, 4.763); 
\draw[ultra thin] (4.250, 4.763) -- (5.000, 3.464);
\draw[ultra thin] (5.000, 5.196) -- (5.500, 4.330);
\clip (0.000, -0.500) rectangle (6.000, 5.696);
}}^{n}
\]
represent a downwards oriented pattern of size $n$ consisting of unfilled unit triangles.

\item Let
\[
\mytikz{0.2}{
\tridown \fill (0.000, 3.464) -- (0.500, 3.464) -- (2.250, 0.433) -- (2.000, 0.000) -- cycle;
}_{n} 
:= \overbrace{
\mytikz{0.3}{
\draw (3.500, 5.196) -- (0.000, 5.196) -- (1.750, 2.165) (2.250, 1.299) -- (3.000, 0.000) -- (4.250, 2.165) (4.750, 3.031) -- (6.000, 5.196) -- (4.500, 5.196);
\draw[dotted] (3.500, 5.196) -- (4.500, 5.196);
\draw[dotted] (1.750, 2.165) -- (2.250, 1.299);
\draw[dotted] (4.250, 2.165) -- (4.750, 3.031);
\draw[ultra thin] (0.500, 4.339) -- (3.000, 4.339) (4.000, 4.339) -- (5.500, 4.339); 
\draw[ultra thin] (1.000, 3.464) -- (2.500, 3.464) (4.500, 3.464) -- (5.000, 3.464);
\draw[ultra thin] (1.500, 2.598) -- (2.000, 2.598);
\draw[ultra thin] (2.500, 1.732) -- (4.000, 1.732);
\draw[ultra thin] (2.500, 0.866) -- (3.500, 0.866);
\draw[ultra thin] (0.500, 4.330) -- (1.000, 5.196);
\draw[ultra thin] (1.000, 3.464) -- (2.000, 5.196);
\draw[ultra thin] (1.500, 2.598) -- (3.000, 5.196);
\draw[ultra thin] (2.500, 0.866) -- (3.250, 2.165) (4.250, 3.897) -- (5.000, 5.196);
\draw[ultra thin] (1.000, 5.196) -- (2.250, 3.031) (2.750, 2.165) -- (3.500, 0.866);
\draw[ultra thin] (2.000, 5.196) -- (2.750, 3.897) (3.750, 2.165) -- (4.000, 1.732);
\draw[ultra thin] (3.000, 5.196) -- (3.250, 4.763); 
\draw[ultra thin] (4.250, 4.763) -- (5.000, 3.464);
\draw[ultra thin] (5.000, 5.196) -- (5.500, 4.330);
\fill[black] (1.000, 5.196) -- (0.500, 4.330) -- (1.500, 4.330) -- cycle;
\fill[black] (1.500, 4.330) -- (1.000, 3.464) -- (2.000, 3.464) -- cycle;
\fill[black] (2.000, 3.464) -- (1.500, 2.598) -- (2.000, 2.598) -- (2.250, 3.031) -- cycle;
\fill[black] (2.750, 2.165) -- (2.500, 1.732) -- (3.000, 1.732) -- cycle;
\fill[black] (3.000, 1.732) -- (2.500, 0.866) -- (3.500, 0.866) -- cycle;
\clip (0.000, -0.500) rectangle (6.000, 5.696);
}}^{n}
\]
represent a downwards oriented pattern of size $n$ consisting of unfilled unit triangles, and at the marked side the upwards oriented unit triangles are filled.
\end{itemize}

The above notations describe the corners and sides in a pattern, and a pattern may consist of different types of corners; cut or uncut, marked or unmarked, and so on. With the help of the above definitions we can now give a string of lemmas leading up to the proof of Theorem~\ref{tmm:additivity}.

\begin{lemma}
\label{lemma:instersectionEven}
Let $n\geq1$. Then 
\begin{align}
P_{0,0}(\mytikz{0.08}{\trinone},T,2n) \ \bigcap \ P_{1,0}(\mytikz{0.08}{\trinone},T,2n) &= \left\{\mytikz{0.2}{\trinone}_{2n} \right\}, \label{eq:instersectionEven1}\\
P_{0,0}(\mytikz{0.08}{\trinone},T,2n) \ \bigcap \ P_{1,2}(\mytikz{0.08}{\trinone},T,2n) &= \left\{\mytikz{0.2}{\trinone}_{2n} \right\}, \label{eq:instersectionEven2}\\
P_{0,0}(\mytikz{0.08}{\trinone},T,2n) \ \bigcap \ P_{2,2}(\mytikz{0.08}{\trinone},T,2n) &= \left\{\mytikz{0.2}{\trinone}_{2n} \right\}, \label{eq:instersectionEven3}\\
P_{1,0}(\mytikz{0.08}{\trinone},T,2n) \ \bigcap \ P_{1,2}(\mytikz{0.08}{\trinone},T,2n) &= 
\left\{\mytikz{0.2}{\trinone}_{2n}, \mytikz{0.2}{\trinone \fill (1.500, 2.598) -- (2.500, 2.598) -- (2.000, 3.464) -- cycle;}_{2n} \right\}, \label{eq:instersectionEven4} \\
P_{1,0}(\mytikz{0.08}{\trinone},T,2n) \ \bigcap \ P_{2,2}(\mytikz{0.08}{\trinone},T,2n) &= 
\left\{\mytikz{0.2}{\trinone}_{2n}, \mytikz{0.2}{\trinone \fill (0.000, 0.000) -- (1.000, 0.000) -- (0.500, 0.866) -- cycle;}_{2n} \right\},
\label{eq:instersectionEven5} \\
P_{1,2}(\mytikz{0.08}{\trinone},T,2n) \ \bigcap \ P_{2,2}(\mytikz{0.08}{\trinone},T,2n) &= 
\left\{\mytikz{0.2}{\trinone}_{2n}, \mytikz{0.2}{\trinone \fill (3.000, 0.000) -- (4.000, 0.000) -- (3.500, 0.866) -- cycle;}_{2n} \right\}.
\label{eq:instersectionEven6}
\end{align}
\end{lemma}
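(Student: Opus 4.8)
My plan is to read every element of $P_{r,c}(\mytikz{0.08}{\trinone},T,2n)$ directly off the super-tile structure of $\mu(x)$ and then compare phases. The starting observation is that, by \eqref{eq:DefOfMu}, the substitution sends each upward unit triangle to a size-$2$ \emph{up-block} that is either completely empty or has exactly its three corner unit triangles filled, and sends each downward unit triangle to a completely empty \emph{down-block}. Hence in any $\mu(x)$ the filled upward unit triangles partition into disjoint, grid-aligned $3$-filled up-blocks, while the centre triangle of every down-block is forced empty. By \eqref{eq:defofPrc}, an element of $P_{r,c}(\mytikz{0.08}{\trinone},T,2n)$ is the size-$2n$ window cut from such a $\mu(x)$ at offset $(r,c)$, so its filled triangles are precisely the filled block-triangles lying inside the window, possibly clipped where the window boundary crosses a block.

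Next I would carry out the (finite) bookkeeping that locates the three corners of the window relative to this block grid for each of the four offsets in $I$, keeping track of which corner unit triangle is a block-apex, a block corner, or a down-block centre. The outcome I expect is: at offset $(0,0)$ the window is block-aligned, so each of its three corner triangles sits inside a complete corner up-block and can be filled only together with the other two triangles of that block; at $(1,0)$ the top and bottom-left corners are clipped up-blocks that can show a single filled triangle while the bottom-right corner is a down-block centre and is forced empty; at $(1,2)$ the top and bottom-right corners are free while the bottom-left is a forced-empty down-block centre; and at $(2,2)$ the bottom-left and bottom-right corners are free while the top corner is a forced-empty down-block centre. Taking $x$ all-empty (a size-$2n$ empty upward triangle genuinely occurs inside a hole of $T$, and is a pattern of $T$ by Lemma~\ref{lemma:uniqueplateau}) shows the all-empty pattern $\mytikz{0.08}{\trinone}_{2n}$ lies in every $P_{r,c}$, accounting for the common element appearing in all six identities.

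To pin down the intersections I would then argue that a pattern lying in two distinct phases cannot carry any filled triangle in its interior. Suppose $p$ belongs to phases $\pi$ and $\pi'$. Each phase overlays the up-block grid on $p$, shifted by its own offset, and a filled \emph{interior} triangle of $p$ must be an up-block corner for BOTH overlaid grids, since a down-block centre is forced empty. The key point is that distinct offsets in $I$ shift the grid relative to one another, and a short computation modulo the $2\times 4$ period should show that for every pair of distinct offsets there is an interior position whose role switches between up-block corner and down-block centre; thus any would-be filled interior triangle is forced empty by the other phase. Hence the interior of $p$ is empty, and filled triangles can occur only at the boundary. Matching this against the per-corner list above, the only corner simultaneously free in two phases is the one shared by them -- the top corner for $\{(1,0),(1,2)\}$, the bottom-left for $\{(1,0),(2,2)\}$, the bottom-right for $\{(1,2),(2,2)\}$, and no corner for any pair involving $(0,0)$ -- which yields exactly the right-hand sides of \eqref{eq:instersectionEven1} through \eqref{eq:instersectionEven6}. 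The reverse inclusions I would settle by exhibiting, for each listed corner-filled pattern, explicit underlying $x$ realising it at both relevant phases.

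The main obstacle is this interior-emptiness step taken together with the need to exclude filled triangles along the three \emph{edges} of the window, not merely at its corners: a shifted phase clips blocks all along its boundary, so I must show the two offset grids share no admissible clipped configuration except at the single shared corner. I expect this to reduce to the same modular incompatibility of the two block grids, but it has to be checked edge by edge rather than only corner by corner. A secondary technical point is confirming that the witness patterns used for the reverse inclusions are genuine size-$2n$ patterns of $T$, and not merely locally consistent configurations.
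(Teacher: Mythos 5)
Your approach is correct in substance, but it is genuinely different from the paper's. The paper proves each identity by induction on the size: the cases $n=1,2$ are checked by enumeration, and for the step one takes a pattern $x$ in, say, $P_{0,0}(\mytikz{0.08}{\trinone},T,2p+2)\cap P_{1,0}(\mytikz{0.08}{\trinone},T,2p+2)$ and observes, using the periodicity \eqref{eq:defofPrcExtension}, that the three subpatterns $x[\mytikz{0.08}{\trinone},0,0,2p]$, $x[\mytikz{0.08}{\trinone},2,0,2p]$, $x[\mytikz{0.08}{\trinone},2,4,2p]$ lie in the same intersection at size $2p$; by the induction hypothesis all three are the all-unfilled pattern, and since for $p\ge2$ they cover $x$ (this is why two base cases are needed), $x$ itself is unfilled; the corner-marked patterns in \eqref{eq:instersectionEven4}--\eqref{eq:instersectionEven6} ride along as extra base-case data. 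That argument is short because it exploits self-similarity, but it does not explain \emph{which} pairs acquire an extra pattern or why. Your direct analysis of the two overlaid block grids does exactly that, and your corner bookkeeping (all corners block-complete at $(0,0)$; top and bottom-left free at $(1,0)$; top and bottom-right free at $(1,2)$; bottom-left and bottom-right free at $(2,2)\equiv(0,2)$) is correct and reproduces all six right-hand sides.

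The obstacle you flag does resolve, and more cleanly than you fear, because of one observation: the constraint ``the middle upward triangle of a down-block is unfilled'' needs no interiority or completeness hypothesis --- even a clipped down-block is the image under $\mu$ of a downward triangle, so whatever part of it lies in the window is empty. Hence a filled position in a pattern belonging to two phases must be clipped with respect to \emph{both} up-block grids: if its block in one grid were complete inside the window, the whole block would be filled, and your mod-$(2,4)$ computation (which is correct: for each offset difference $(0,2)$, $(1,0)$, $(1,2)$, every complete up-block of one grid contains exactly one down-block middle of the other, namely its top, bottom-right, or bottom-left triangle respectively) would force a contradiction. So the edge-by-edge check collapses to intersecting two explicit periodic sets of clipped positions: writing $(\rho,\gamma)$ for row and column in the window, the pair $(1,0),(1,2)$ gives $\{(2m,4m)\}\cup\{(2n-1,4k)\}$ versus $\{(2m,0)\}\cup\{(2n-1,4k+2)\}$, whose intersection is the single position $(0,0)$, i.e.\ the top corner, and similarly for the other pairs; phase $(0,0)$ has no clipped blocks at all (the window is block-aligned and of even size), which yields \eqref{eq:instersectionEven1}--\eqref{eq:instersectionEven3} at once. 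Your secondary worry about witnesses is also settled: each required corner-filled pattern occurs in $T$ (take a filled unit triangle on the edge of a copy of $T_k$ abutting a central hole of size $2^k\ge 2n$ and let the window extend into the hole), and feeding that same pattern in as the underlying $x$ realises the candidate at both relevant phases.
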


\begin{proof}
Let us consider the equality in \eqref{eq:instersectionEven1}. We prove it by induction on $n$. The basis cases $n = 1,2$ are seen via a straight forward enumeration. Assume \eqref{eq:instersectionEven1} holds for $n\leq p$. For the induction step, $n = p+1$, we take a pattern
\[
	x \in P_{0,0}(\mytikz{0.08}{\trinone},T,2p+2) \ \bigcap \ P_{1,0}(\mytikz{0.08}{\trinone},T,2p+2).
\]
Such a pattern exists, since the intersection is non-empty; it contains at-least the pattern consisting of unfilled unit triangles. 
By the definition of the $P_{r,c}$ sets from \eqref{eq:defofPrcExtension} and the induction assumption we obtain 
 \[
	\renewcommand{\arraystretch}{1.3}
	\left\{\begin{array}{@{}l@{}}	
		x[\mytikz{0.08}{\trinone},0,0,2p], \\
		x[\mytikz{0.08}{\trinone},2,0,2p], \\
		x[\mytikz{0.08}{\trinone},2,4,2p]
	\end{array}\right\}
\quad \subseteq \quad P_{0,0}(T,2p) \ \bigcap \ P_{1,0}(\mytikz{0.08}{\trinone}, T,2p).
\]
Therefore we may conclude 
\[	
	x = \mytikz{0.2}{\trinone}_{2p+2}\,,
\]
which concludes the induction. The remaining intersections in \eqref{eq:instersectionEven2}, \eqref{eq:instersectionEven3}, \eqref{eq:instersectionEven4}, \eqref{eq:instersectionEven5}, and \eqref{eq:instersectionEven6} are dealt with in the same way. 
\end{proof}

The following lemmas are similar and can be proven in the same way as Lemma~\ref{lemma:instersectionEven}.

\begin{lemma}
Let $n\geq1$. Then 
\begin{align*}
P_{0,0}(\mytikz{0.08}{\trinone},T,2n+1) \ \bigcap \ P_{1,0}(\mytikz{0.08}{\trinone},T,2n+1) &= 
\left\{\mytikz{0.2}{\trinone}_{2n+1}, \mytikz{0.2}{\trinone \fill (3.000, 0.000) -- (4.000, 0.000) -- (3.500, 0.866) -- cycle;}_{2n+1} \right\}, \\
P_{0,0}(\mytikz{0.08}{\trinone},T,2n+1) \ \bigcap \ P_{1,2}(\mytikz{0.08}{\trinone},T,2n+1) &= 
\left\{\mytikz{0.2}{\trinone}_{2n+1}, \mytikz{0.2}{\trinone \fill (0.000, 0.000) -- (1.000, 0.000) -- (0.500, 0.866) -- cycle;}_{2n+1} \right\}, \\
P_{0,0}(\mytikz{0.08}{\trinone},T,2n+1) \ \bigcap \ P_{2,2}(\mytikz{0.08}{\trinone},T,2n+1) &= 
\left\{\mytikz{0.2}{\trinone}_{2n+1} \right\}, \\
P_{1,0}(\mytikz{0.08}{\trinone},T,2n+1) \ \bigcap \ P_{1,2}(\mytikz{0.08}{\trinone},T,2n+1) &= 
\left\{\mytikz{0.2}{\trinone}_{2n+1}, \mytikz{0.2}{\trinone \fill (1.500, 2.598) -- (2.500, 2.598) -- (2.000, 3.464) -- cycle;}_{2n+1} \right\}, \\
P_{1,0}(\mytikz{0.08}{\trinone},T,2n+1) \ \bigcap \ P_{2,2}(\mytikz{0.08}{\trinone},T,2n+1) &= 
\left\{\mytikz{0.2}{\trinone}_{2n+1} \right\}, \\
P_{1,2}(\mytikz{0.08}{\trinone},T,2n+1) \ \bigcap \ P_{2,2}(\mytikz{0.08}{\trinone},T,2n+1) &= 
\left\{\mytikz{0.2}{\trinone}_{2n+1} \right\}. 
\end{align*}
\end{lemma}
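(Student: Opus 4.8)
The plan is to run the same induction on the half-size $n$ as in the proof of Lemma~\ref{lemma:instersectionEven}, proving all six identities simultaneously. The base case $n=1$ concerns upwards oriented patterns of size $3$, all of which are catalogued in Figure~\ref{fig:AllUpPatterns3}; I would determine each of the sets $P_{r,c}(\mytikz{0.08}{\trinone},T,3)$ by the finite enumeration coming from applying $\mu$ and reading off size-$3$ subtriangles, and then intersect them directly. If the recursion turns out to need two anchors (as the even case uses $n=1,2$), the size-$5$ case $n=2$ is checked by the same finite enumeration.

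For the inductive step, assume the six identities hold for all half-sizes up to $p$ and take a pattern
\[
	x \in P_{r_1,c_1}(\mytikz{0.08}{\trinone},T,2p+3)\ \bigcap\ P_{r_2,c_2}(\mytikz{0.08}{\trinone},T,2p+3)
\]
for one of the six index pairs. Exactly as for Lemma~\ref{lemma:instersectionEven}, I would use the block structure \eqref{eq:DefTnBlock} of the super-tiles together with the periodic extension \eqref{eq:defofPrcExtension} to read off from $x$ a short list of size-$(2p+1)$ subtriangles placed at the three corners of $x$ (at offsets $(0,0)$, $(2,0)$ and $(2,4)$), and to argue by the definition \eqref{eq:defofPrc} of the $P_{r,c}$-sets that each of them lies in the \emph{same} pair-intersection at half-size $p$. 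The induction hypothesis then forces every one of these subtriangles to be either the fully unfilled triangle or the appropriate singly-filled-corner triangle; reassembling these forced subtriangles pins $x$ down to one of the patterns on the stated right-hand side, and the reverse inclusion is immediate by exhibiting the nontrivial element explicitly. Equality then follows.

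The point where the odd case genuinely diverges from the even case — and the step I expect to demand the most care — is which of the four index positions plays the distinguished rôle. In Lemma~\ref{lemma:instersectionEven} the three intersections that contain the top index $(0,0)$ collapse to the bare unfilled triangle, while the other three each acquire a single filled corner. For size $2n+1$ the distinguished index is instead $(2,2)$: the three intersections containing $(2,2)$ are the singletons, and the remaining three, namely $P_{0,0}\cap P_{1,0}$, $P_{0,0}\cap P_{1,2}$ and $P_{1,0}\cap P_{1,2}$, each gain exactly one filled corner (bottom-right, bottom-left, and top, respectively). This swap of the distinguished index from $(0,0)$ to $(2,2)$ is precisely the effect of the extra unit row and column carried by the odd size once the corner offsets are reduced against the $(2,4)$-periodicity of \eqref{eq:defofPrcExtension}. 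I would therefore track, for each of the six pairs, the residue of the offset governing the surviving corner, check it against the claimed right-hand side, and confirm that the base case $n=1$ already displays the correct corner, so that the induction transports it faithfully.
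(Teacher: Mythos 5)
Your proposal is correct and takes essentially the paper's approach: the paper disposes of this lemma by stating it ``can be proven in the same way as Lemma~\ref{lemma:instersectionEven}'', which is precisely the induction you describe --- base cases by finite enumeration, then an inductive step that uses \eqref{eq:defofPrc} and the periodicity \eqref{eq:defofPrcExtension} to place the three size-$(2p+1)$ corner subtriangles at offsets $(0,0)$, $(2,0)$, $(2,4)$ into the same pair-intersection at the smaller odd size, after which the induction hypothesis and reassembly pin down the pattern. Your two supplementary observations --- that two anchors (sizes $3$ and $5$) are needed since the three subtriangles only cover the whole pattern once $p\geq 2$, and that the singleton-producing index shifts from $(0,0)$ in the even case to $(2,2)$ in the odd case --- are both accurate and are exactly what the base-case enumeration establishes and the induction then transports.
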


\begin{lemma}
Let $n\geq2$. Then 
\begin{align*}
P_{0,0}(\mytikz{0.08}{\tritop},T,2n) \ \bigcap \ P_{1,0}(\mytikz{0.08}{\tritop},T,2n) &= 
\left\{\mytikz{0.2}{\tritop}_{2n} \right\}, \\
P_{0,0}(\mytikz{0.08}{\tritop},T,2n) \ \bigcap \ P_{1,2}(\mytikz{0.08}{\tritop},T,2n) &= 
\left\{\mytikz{0.2}{\tritop}_{2n} \right\}, \\
P_{0,0}(\mytikz{0.08}{\tritop},T,2n) \ \bigcap \ P_{2,2}(\mytikz{0.08}{\tritop},T,2n) &= 
\left\{\mytikz{0.2}{\tritop}_{2n}, \mytikz{0.2}{\tritop \fill (1.500, 2.598) -- (1.250, 2.165) -- (2.750, 2.165) -- (2.500, 2.598) -- cycle;}_{2n} \right\}, \\
P_{1,0}(\mytikz{0.08}{\tritop},T,2n) \ \bigcap \ P_{1,2}(\mytikz{0.08}{\tritop},T,2n) &= 
\left\{\mytikz{0.2}{\tritop}_{2n} \right\}, \\
P_{1,0}(\mytikz{0.08}{\tritop},T,2n) \ \bigcap \ P_{2,2}(\mytikz{0.08}{\tritop},T,2n) &=
\left\{\mytikz{0.2}{\tritop}_{2n}, \mytikz{0.2}{\tritop \fill (0.000, 0.000) -- (1.000, 0.000) -- (0.500, 0.866) -- cycle;}_{2n} \right\}, \\
P_{1,2}(\mytikz{0.08}{\tritop},T,2n) \ \bigcap \ P_{2,2}(\mytikz{0.08}{\tritop},T,2n) &=
\left\{\mytikz{0.2}{\tritop}_{2n}, \mytikz{0.2}{\tritop \fill (3.000, 0.000) -- (4.000, 0.000) -- (3.500, 0.866) -- cycle;}_{2n} \right\}.
\end{align*}
\end{lemma}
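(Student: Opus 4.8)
The plan is to prove all six identities by induction on $n$, following verbatim the template of the proof of Lemma~\ref{lemma:instersectionEven}; the author's remark that these lemmas are "proven in the same way" is exactly the signal to reuse that scheme. I would single out two representatives --- one single-element intersection, say $P_{0,0}(\mytikz{0.08}{\tritop},T,2n)\cap P_{1,0}(\mytikz{0.08}{\tritop},T,2n)$, and one two-element intersection, say $P_{0,0}(\mytikz{0.08}{\tritop},T,2n)\cap P_{2,2}(\mytikz{0.08}{\tritop},T,2n)$ --- and treat the remaining four by relabelling the distinguished corner. It is worth noting at the outset that the three two-element intersections are exactly those involving the index $(2,2)$, which guides where the extra freedom appears.

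I would begin with the base of the induction. A top-cut triangle exists only for size at least $2$, so the smallest relevant sizes are $2n=4$ and $2n=6$; for these the sets $P_{r,c}(\mytikz{0.08}{\tritop},T,2n)$ are finite and can be listed explicitly --- using Lemma~\ref{lemma:uniqueplateau} to reduce the enumeration to a single fixed $T_k$ --- after which the six intersections are checked by direct comparison. The all-unfilled trapezoid $\mytikz{0.2}{\tritop}_{2n}$ lies in every $P_{r,c}$, so every intersection is non-empty; this starts the induction and simultaneously supplies the common element appearing in all six cases.

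For the induction step I would assume every identity for sizes up to $2p$ and take $x$ in the relevant intersection at size $2p+2$. Using the block decomposition \eqref{eq:DefTnBlock} together with the index-shift relation \eqref{eq:defofPrcExtension}, I would exhibit three size-$2p$ sub-patterns of $x$: the top sub-trapezoid, which again carries the cut apex and so is of top-cut type, and the two bottom corner sub-triangles, which are of the \emph{full} upward type. Each is forced into a corresponding pairwise intersection at size $2p$, and here the argument differs in one genuine respect from the purely self-contained $\mytikz{0.08}{\trinone}$ case: the top piece is governed by the induction hypothesis of the present lemma, but the two corner pieces are full triangles and so are pinned down instead by Lemma~\ref{lemma:instersectionEven} (the even $\mytikz{0.08}{\trinone}$ intersections) at the same size $2p$. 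Combining the three constraints reconstructs $x$; everything is forced empty except, in the two-element cases, the filled region propagated from the top piece, which yields precisely the second listed pattern.

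The main obstacle --- what is really concealed behind "in the same way" --- is the index bookkeeping forced by the cut apex. Because the top row of a $\mytikz{0.08}{\tritop}$ pattern carries row-number $1$ rather than $0$, the offsets $(r,c)$ at which the size-$2p$ sub-patterns sit inside $\mu(y)$ are shifted relative to the $\mytikz{0.08}{\trinone}$ computation, and at every reduction one must check both that the parity constraint --- an upward sub-triangle may start only at an even column --- is respected and that the top piece genuinely inherits the cut. The second, subtler point is that the extra element of each two-element intersection is an existence claim rather than a ruling-out: one must verify that the marked trapezoid actually occurs in $T$ and is genuinely shared by both $P_{r,c}$-sets, which is why exhibiting it at the base level and then tracking its survival through the reassembly --- leaning on the already-established $\mytikz{0.08}{\trinone}$ lemmas rather than re-deriving them --- is essential to keep the whole induction finite.
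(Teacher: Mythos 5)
Your induction skeleton is exactly what the paper intends: the paper gives no separate proof of this lemma (it only says it is ``proven in the same way as Lemma~\ref{lemma:instersectionEven}''), and your fleshing-out --- decompose a pattern $x$ in a size-$(2p+2)$ intersection into the top sub-trapezoid of size $2p$, governed by the induction hypothesis of the present lemma, plus the two bottom corner triangles of size $2p$, governed by the already-proven Lemma~\ref{lemma:instersectionEven}, with base cases settled by finite enumeration via Lemma~\ref{lemma:uniqueplateau} --- is the correct adaptation of that template, including the row-index and coverage bookkeeping.

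However, the way you close the induction step would fail for three of the six identities. Since both bottom corner pieces sit at positions congruent, modulo $(2,4)$, to the position of $x$ itself, in the cases $P_{1,0}\cap P_{1,2}$, $P_{1,0}\cap P_{2,2}$ and $P_{1,2}\cap P_{2,2}$ they land in the intersections \eqref{eq:instersectionEven4}, \eqref{eq:instersectionEven5} and \eqref{eq:instersectionEven6}, which are themselves \emph{two-element} sets --- so the bottom pieces are not ``pinned down'' by Lemma~\ref{lemma:instersectionEven}, and which of the $2\times2\times2$ combinations survive must be decided by consistency on the overlaps of the three pieces. Carrying this out shows that your rule ``the extra pattern is the one whose fill propagates from the top piece'' is backwards in two of the three two-element cases: for $P_{1,0}\cap P_{2,2}$ and $P_{1,2}\cap P_{2,2}$ the second listed pattern has its filled cell at a \emph{bottom} corner of $x$, produced by a bottom piece taking its marked form (that cell lies in no overlap), while the marked option that your induction hypothesis offers for the top trapezoid must be \emph{excluded}, because its marked cell lies inside a bottom piece that is unfilled there under both of its admissible values. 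Only for $P_{0,0}\cap P_{2,2}$ does the extra fill (in row 1, next to the cut) genuinely come from the top piece. Likewise, in the singleton case $P_{1,0}\cap P_{1,2}$ the bottoms' marked options from \eqref{eq:instersectionEven4} must be killed by the overlap with the unfilled top trapezoid; without that step your argument is incomplete there. So the six cases need individual overlap analyses: as stated, your combination rule gives the wrong second element in two cases and an unjustified conclusion in a third, although the repair stays entirely within your (and the paper's) framework. Your remark that the reverse inclusion --- both listed patterns actually occurring in both $P_{r,c}$-sets --- needs separate verification is correct and is a point the paper itself glosses over.
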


\begin{lemma}
Let $n\geq2$. Then 
\begin{align*}
P_{0,0}(\mytikz{0.08}{\tritop},T,2n+1) \ \bigcap \ P_{1,0}(\mytikz{0.08}{\tritop},T,2n+1) &= 
\left\{\mytikz{0.2}{\tritop}_{2n+1}, \mytikz{0.2}{\tritop \fill (3.000, 0.000) -- (4.000, 0.000) -- (3.500, 0.866) -- cycle;}_{2n+1} \right\}, \\
P_{0,0}(\mytikz{0.08}{\tritop},T,2n+1) \ \bigcap \ P_{1,2}(\mytikz{0.08}{\tritop},T,2n+1) &= 
\left\{\mytikz{0.2}{\tritop}_{2n+1}, \mytikz{0.2}{\tritop \fill (0.000, 0.000) -- (1.000, 0.000) -- (0.500, 0.866) -- cycle;}_{2n+1} \right\}, \\
P_{0,0}(\mytikz{0.08}{\tritop},T,2n+1) \ \bigcap \ P_{2,2}(\mytikz{0.08}{\tritop},T,2n+1) &= 
\left\{\mytikz{0.2}{\tritop}_{2n+1}, \mytikz{0.2}{\tritop \fill (1.500, 2.598) -- (1.250, 2.165) -- (2.750, 2.165) -- (2.500, 2.598) -- cycle;}_{2n+1} \right\}, \\
P_{1,0}(\mytikz{0.08}{\tritop},T,2n+1) \ \bigcap \ P_{1,2}(\mytikz{0.08}{\tritop},T,2n+1) &= \left\{\mytikz{0.2}{\tritop}_{2n+1} \right\}, \\
P_{1,0}(\mytikz{0.08}{\tritop},T,2n+1) \ \bigcap \ P_{2,2}(\mytikz{0.08}{\tritop},T,2n+1) &= \left\{\mytikz{0.2}{\tritop}_{2n+1} \right\}, \\
P_{1,2}(\mytikz{0.08}{\tritop},T,2n+1) \ \bigcap \ P_{2,2}(\mytikz{0.08}{\tritop},T,2n+1) &= \left\{\mytikz{0.2}{\tritop}_{2n+1} \right\}.
\end{align*}
\end{lemma}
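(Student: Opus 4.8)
The plan is to prove all six identities simultaneously by induction on $n$, copying verbatim the strategy used for Lemma~\ref{lemma:instersectionEven}; the hint ``proven in the same way'' is to be taken literally. The one new feature compared with the pure $\mytikz{0.08}{\trinone}$ case is that the patterns here carry a cut-off top corner, so the self-similar decomposition is not homogeneous: the apex subpattern is again of type $\mytikz{0.08}{\tritop}$, while the two base-corner subpatterns are honest triangles of type $\mytikz{0.08}{\trinone}$. First I would settle the base cases $n=2,3$ (sizes $5$ and $7$) by direct enumeration. By Lemma~\ref{lemma:uniqueplateau} each set $P_{r,c}(\mytikz{0.08}{\tritop},T,m)$ with $m\in\{5,7\}$ is already realised inside a finite $T_N$, so the six intersections are computed by listing the finitely many top-cut subpatterns occurring at the four offsets $(r,c)\in I$ and intersecting the resulting lists. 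This both confirms the claimed right-hand sides at the smallest two odd sizes and fixes the orientation conventions the induction must respect.

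For the induction step, assume all six identities hold at parameter $n-1$, that is, at size $2n-1$, and fix one intersection, say $P_{1,0}\cap P_{1,2}$ at size $2n+1$. Take $x$ in this intersection; it is non-empty, since the fully unfilled trapezoid $\mytikz{0.08}{\tritop}_{2n+1}$ lies in every $P_{r,c}(\mytikz{0.08}{\tritop},T,2n+1)$. Unwinding the definitions \eqref{eq:defofPrc} and \eqref{eq:defofPrcExtension}, membership of $x$ in each of the two sets means that $x$ is cut out of $\mu(y)$ for some admissible $y$ at the prescribed offset. I then read off the three corner subpatterns of $x$ of size $2n-1$: the apex one (type $\mytikz{0.08}{\tritop}$) and the two base ones (type $\mytikz{0.08}{\trinone}$). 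Since $2(2n-1)>2n+1$ for $n\geq 2$, these three subpatterns cover every unit triangle of $x$, so pinning them down pins down $x$.

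Each corner subpattern inherits, via the offset shift in \eqref{eq:defofPrcExtension}, membership in an intersection of two $P_{r,c}$ sets of size $2n-1$. The apex subpattern is controlled by the present lemma's induction hypothesis, while the two base subpatterns are controlled by the already-established $\mytikz{0.08}{\trinone}$ intersection lemmas (Lemma~\ref{lemma:instersectionEven} and its odd-size companion, which hold for all sizes, so there is no circularity). Matching offsets and parities then forces each of the three subpatterns to be unfilled, or to admit a single marked corner in a location compatible with its neighbours; intersecting these possibilities leaves exactly the listed right-hand side. The remaining five intersections are handled identically, only the offsets changing.

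The main obstacle is bookkeeping rather than any new idea: one must track, for each of the six pairs, how the offsets $(r,c)\in I$ propagate to the three corner subpatterns under the doubling by $\mu$, and check that at most one marked corner survives, in a consistent location. The sensitivity to parity is precisely what distinguishes the odd-size statement from the preceding even-size one: for even size the extra marked pattern occurs exactly in the three pairs containing the offset $(2,2)$, whereas for odd size it occurs in the three pairs containing $(0,0)$, and the identity of the marked corner is dictated by the companion offset. Verifying this flip carefully across all six intersections, with the top corner already cut, is the only real work; the inductive skeleton is identical to that of Lemma~\ref{lemma:instersectionEven}.
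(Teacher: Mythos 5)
Your proposal is correct and is essentially the paper's own argument: the paper disposes of this lemma with the single remark that it ``can be proven in the same way as Lemma~\ref{lemma:instersectionEven}'', and that proof is exactly the skeleton you reconstruct --- simultaneous induction over all six intersections, base cases settled by finite enumeration, and an induction step in which a pattern in an intersection is decomposed into three corner subpatterns at offsets $(0,0)$, $(2,0)$, $(2,4)$ whose memberships in smaller-size intersections follow from the mod-$(2,4)$ identification in \eqref{eq:defofPrcExtension} (your variant of taking the two base corners as full upward triangles governed by the earlier upward-triangle lemmas, rather than top-cut ones governed by the present lemma's induction hypothesis, is an inessential bookkeeping choice). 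One small repair: your covering criterion ``$2(2n-1)>2n+1$'' is not the right condition --- for $n=2$ the three size-$3$ corner subpatterns do \emph{not} cover a size-$5$ pattern (the unit triangle in row $3$, column $3$ is missed), and coverage holds only for $n\geq 3$ --- but this is harmless because your base cases $n=2,3$ ensure the induction step is invoked only where coverage does hold.
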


\begin{lemma}
Let $n\geq2$. Then 
\begin{align*}
P_{0,0}(\mytikz{0.08}{\trilower},T,2n) \ \bigcap \ P_{1,0}(\mytikz{0.08}{\trilower},T,2n) &= 
\left\{\mytikz{0.2}{\trilower}_{2n}, \mytikz{0.2}{\trilower \fill (3.500, 0.866) -- (3.000, 0.000) -- (2.500, 0.000) -- (3.250, 1.299) -- cycle;}_{2n} \right\}, \\
P_{0,0}(\mytikz{0.08}{\trilower},T,2n) \ \bigcap \ P_{1,2}(\mytikz{0.08}{\trilower},T,2n) &=
\left\{\mytikz{0.2}{\trilower}_{2n}, \mytikz{0.2}{\trilower \fill (0.500, 0.866) -- (1.000, 0.000) -- (1.500, 0.000) -- (0.750, 1.299) -- cycle;}_{2n} \right\}, \\
P_{0,0}(\mytikz{0.08}{\trilower},T,2n) \ \bigcap \ P_{2,2}(\mytikz{0.08}{\trilower},T,2n) &= 
\left\{\mytikz{0.2}{\trilower}_{2n} \right\}, \\
P_{1,0}(\mytikz{0.08}{\trilower},T,2n) \ \bigcap \ P_{1,2}(\mytikz{0.08}{\trilower},T,2n) &= 
\left\{\mytikz{0.2}{\trilower}_{2n}, \mytikz{0.2}{\trilower \fill (1.500, 2.598) -- (2.500, 2.598) -- (2.000, 3.464) -- cycle;}_{2n} \right\}, \\
P_{1,0}(\mytikz{0.08}{\trilower},T,2n) \ \bigcap \ P_{2,2}(\mytikz{0.08}{\trilower},T,2n) &= 
\left\{\mytikz{0.2}{\trilower}_{2n} \right\}, \\
P_{1,2}(\mytikz{0.08}{\trilower},T,2n) \ \bigcap \ P_{2,2}(\mytikz{0.08}{\trilower},T,2n) &= 
\left\{\mytikz{0.2}{\trilower}_{2n} \right\}.
\end{align*}
\end{lemma}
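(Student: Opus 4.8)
The plan is to prove all six identities simultaneously by the same scheme as Lemma~\ref{lemma:instersectionEven}: a single induction on $n$. The base cases are the two smallest sizes $n=2,3$ (matching the two base cases of the model proof), handled by direct enumeration. By Lemma~\ref{lemma:uniqueplateau} every size-$4$ (resp.\ size-$6$) subpattern of $T$ already occurs in a bounded super-tile $T_k$, so each of the four sets $P_{r,c}(\mytikz{0.08}{\trilower},T,2n)$, $(r,c)\in I$, and all six pairwise intersections can be listed by inspecting finitely many positions in that $T_k$. This fixes the base of the induction and at the same time establishes the $\supseteq$ inclusions for all $n$: each pattern on a right-hand side (the bare $\mytikz{0.08}{\trilower}_{2n}$, the apex-filled variant, and the two bottom-corner-filled variants) is exhibited in both of the relevant phases by a concrete placement inside a large hole or solid block of $T$, and such placements persist under $\mu$.

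For the induction step I would argue identity by identity exactly as in the model. Fixing a phase pair $(r,c),(r',c')$, take $x$ in the size-$2(p+1)$ intersection $P_{r,c}(\mytikz{0.08}{\trilower},T,2p+2)\cap P_{r',c'}(\mytikz{0.08}{\trilower},T,2p+2)$. Using the periodicity extension \eqref{eq:defofPrcExtension}, the defining relation \eqref{eq:defofPrc}, and the block structure \eqref{eq:DefTnBlock}, the size-$2p$ sub-subpatterns of $x$ read off at the three offsets analogous to $(0,0),(2,0),(2,4)$ in the proof of Lemma~\ref{lemma:instersectionEven} each fall into the corresponding size-$2p$ intersection of the \emph{same} two phases. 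The induction hypothesis then forces every one of these sub-subpatterns to be one of the (one or two) patterns already listed at level $2p$. Since these three size-$2p$ sub-subpatterns cover all of $x$ except the unit triangles clustered at the apex and at the two bottom cuts, off that bounded corner region $x$ must coincide with $\mytikz{0.08}{\trilower}_{2p+2}$.

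It then remains to pin down the few corner triangles the cover leaves free. Here the cut bottom corners of the type $\mytikz{0.08}{\trilower}$ break the apex--base symmetry, and this is the source of the asymmetric conclusions: an intersection coupling the top phase $(0,0)$ with a lower phase leaves a bottom-corner triangle free, the intersection $P_{1,0}\cap P_{1,2}$ leaves the apex triangle free, while every intersection involving the phase $(2,2)$ pins all corners and yields only the bare pattern. Which fillings of a free corner actually occur is decided by the local action of $\mu$ at that corner, checked against the base enumeration, and this reproduces precisely the one- or two-element right-hand sides.

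The main obstacle I expect is not the renormalisation argument, which is mechanical once \eqref{eq:defofPrcExtension} is available, but the careful corner bookkeeping in this last step: one must determine, for each of the six phase pairs separately and for the position residues forced by the cut corners, exactly which apex and bottom unit triangles stay undetermined and which of their fillings are compatible with \emph{both} phases. This is where the list splits into four two-element and two one-element intersections, and it is the only part that genuinely uses the geometry of $\mytikz{0.08}{\trilower}$ rather than copying Lemma~\ref{lemma:instersectionEven} verbatim.
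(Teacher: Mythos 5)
Your proposal takes essentially the same route as the paper: the paper proves only Lemma~\ref{lemma:instersectionEven} in detail (induction on $n$, base cases by finite enumeration, then the renormalisation step reading off the three size-$2p$ subpatterns at offsets $(0,0)$, $(2,0)$, $(2,4)$ and invoking the induction hypothesis via \eqref{eq:defofPrcExtension}), and simply states that the present lemma ``can be proven in the same way.'' Your added corner bookkeeping is precisely the adaptation that ``the same way'' requires for the two-element right-hand sides, so the proposal matches the paper's intended proof.
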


\begin{lemma}
Let $n\geq2$. Then 
\begin{align*}
P_{0,0}(\mytikz{0.08}{\trilower},T,2n+1) \ \bigcap \ P_{1,0}(\mytikz{0.08}{\trilower},T,2n+1) &= 
\left\{\mytikz{0.2}{\trilower}_{2n+1} \right\}, \\
P_{0,0}(\mytikz{0.08}{\trilower},T,2n+1) \ \bigcap \ P_{1,2}(\mytikz{0.08}{\trilower},T,2n+1) &= 
\left\{\mytikz{0.2}{\trilower}_{2n+1} \right\}, \\
P_{0,0}(\mytikz{0.08}{\trilower},T,2n+1) \ \bigcap \ P_{2,2}(\mytikz{0.08}{\trilower},T,2n+1) &= 
\left\{\mytikz{0.2}{\trilower}_{2n+1} \right\}, \\
P_{1,0}(\mytikz{0.08}{\trilower},T,2n+1) \ \bigcap \ P_{1,2}(\mytikz{0.08}{\trilower},T,2n+1) &= 
\left\{\mytikz{0.2}{\trilower}_{2n+1}, \mytikz{0.2}{\trilower \fill (1.500, 2.598) -- (2.500, 2.598) -- (2.000, 3.464) -- cycle;}_{2n+1} \right\}, \\
P_{1,0}(\mytikz{0.08}{\trilower},T,2n+1) \ \bigcap \ P_{2,2}(\mytikz{0.08}{\trilower},T,2n+1) &=
\left\{\mytikz{0.2}{\trilower}_{2n+1}, \mytikz{0.2}{\trilower \fill (0.500, 0.866) -- (1.000, 0.000) -- (1.500, 0.000) -- (0.750, 1.299) -- cycle;}_{2n+1} \right\}, \\
P_{1,2}(\mytikz{0.08}{\trilower},T,2n+1) \ \bigcap \ P_{2,2}(\mytikz{0.08}{\trilower},T,2n+1) &=
\left\{\mytikz{0.2}{\trilower}_{2n+1}, \mytikz{0.2}{\trilower \fill (3.500, 0.866) -- (3.000, 0.000) -- (2.500, 0.000) -- (3.250, 1.299) -- cycle;}_{2n+1} \right\}.
\end{align*}
\end{lemma}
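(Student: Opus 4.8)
The plan is to prove all six identities simultaneously by induction on $n$, following the scheme used for Lemma~\ref{lemma:instersectionEven} and freely re-using the five intersection lemmas already established, namely those for the types $\mytikz{0.08}{\trinone}$ and $\mytikz{0.08}{\tritop}$ and for $\mytikz{0.08}{\trilower}$ of even size. The guiding principle is that a pattern lying in two of the sets $P_{r,c}(\mytikz{0.08}{\trilower},T,2n+1)$ must be compatible with two distinct phases of the super-tile grid, and the self-similar, position-dependent filling of $T$ makes this possible only for the all-unfilled pattern, save for at most one distinguished corner.

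First I would dispose of the base case by a finite computation. By Lemma~\ref{lemma:uniqueplateau} the sets $P(\mytikz{0.08}{\trilower},T,m)$, and hence the sets $P_{r,c}(\mytikz{0.08}{\trilower},T,m)$ from \eqref{eq:defofPrc}, are already visible inside some $T_N$, so for $n=2$ (size $5$) the six intersections can be listed explicitly. Since the reduction below lowers the size by $2$ and thus preserves the parity, one base value of $n$ should suffice, provided the covering step of the induction works for every $n\geq 3$; should the overlap argument degenerate at the smallest size, I would simply include $n=2,3$ as base cases, exactly as the two base cases $n=1,2$ are needed in Lemma~\ref{lemma:instersectionEven}.

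For the induction step, fix one of the six pairs and take $x$ in the corresponding size-$(2n+1)$ intersection, so that $x=\mu(y)[\mytikz{0.08}{\trilower},r,c,2n+1]=\mu(y')[\mytikz{0.08}{\trilower},r',c',2n+1]$ for some $y,y'\in P(\mytikz{0.08}{\trilower},T,2n+1)$. Each size-$(2n-1)$ subtriangle of $x$ is again of the form $\mu(\cdot)[\cdot,\rho,\gamma,2n-1]$, with its global phase shifted according to \eqref{eq:defofPrcExtension}, and therefore lies in a size-$(2n-1)$ intersection of the same kind. The geometric heart of the step is to select a family of such subtriangles whose union is all of $x$ and each of whose types has already been treated (the blocks carrying the cut bottom corners, handled by the induction hypothesis, together with interior full or top-cut blocks, handled by the earlier lemmas). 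The induction hypothesis and the earlier lemmas then force every block to be unfilled except possibly near one corner; combining the coverings pins $x$ down to $\mytikz{0.2}{\trilower}_{2n+1}$ in the three rigid cases, and to that pattern together with the single admissible corner-marking in the other three. For those three I would argue the two inclusions separately: $\supseteq$ is a realisability check, exhibiting an explicit $y$ whose image $\mu(y)$ displays the marked corner at both prescribed phases, and $\subseteq$ is the forcing argument just described, arranged so that only the cells at the one distinguished corner escape the unfilled conclusion.

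The step I expect to be the main obstacle is the combined bookkeeping of three features: the cut corners of the $\mytikz{0.08}{\trilower}$ shape, the phase shift \eqref{eq:defofPrcExtension} induced by $\mu$, and the parity of $2n+1$. It is precisely their interaction that decides which corner (top, bottom-left, or bottom-right) is markable in each intersection, and it is what makes the odd-size answer differ from the even-size one; in particular the top marking stays attached to $P_{1,0}\cap P_{1,2}$ while the two bottom markings migrate to the pairs involving $P_{2,2}$. Verifying that the chosen covering genuinely exhausts $x$ while each block still lands in a known intersection set, and that no second corner can simultaneously carry a mark, is the delicate part; the remaining work is the routine finite enumeration justified through Lemma~\ref{lemma:uniqueplateau}.
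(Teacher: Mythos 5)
Your plan coincides with the paper's own treatment: the paper disposes of this lemma with the single remark that it can be proven in the same way as Lemma~\ref{lemma:instersectionEven}, i.e.\ by induction on $n$ with the basis settled by finite enumeration and the step by placing size-$(2n-1)$ subpatterns whose phases, shifted via \eqref{eq:defofPrcExtension}, put them into already-established smaller intersections that force the pattern to be unfilled apart from the admissible corner marking. Your extra care---the separate realisability ($\supseteq$) check in the three two-element cases, the parity-preserving size reduction, and allowing an additional base case should the covering degenerate at the smallest size---only makes explicit what the paper leaves implicit.
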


\begin{lemma}
Let $n\geq2$. Then 
\begin{align*}
P_{0,0}(\mytikz{0.08}{\triall},T,2n) \ \bigcap \ P_{1,0}(\mytikz{0.08}{\triall},T,2n) &= 
\left\{\mytikz{0.2}{\triall}_{2n}, \mytikz{0.2}{\triall \fill (3.500, 0.866) -- (3.000, 0.000) -- (2.500, 0.000) -- (3.250, 1.299) -- cycle;}_{2n} \right\}, \\
P_{0,0}(\mytikz{0.08}{\triall},T,2n) \ \bigcap \ P_{1,2}(\mytikz{0.08}{\triall},T,2n) &=
\left\{\mytikz{0.2}{\triall}_{2n}, \mytikz{0.2}{\triall \fill (0.500, 0.866) -- (1.000, 0.000) -- (1.500, 0.000) -- (0.750, 1.299) -- cycle;}_{2n} \right\}, \\
P_{0,0}(\mytikz{0.08}{\triall},T,2n) \ \bigcap \ P_{2,2}(\mytikz{0.08}{\triall},T,2n) &= 
\left\{\mytikz{0.2}{\triall}_{2n}, \mytikz{0.2}{\triall \fill (1.500, 2.598) -- (1.250, 2.165) -- (2.750, 2.165) -- (2.500, 2.598) -- cycle;}_{2n} \right\}, \\
P_{1,0}(\mytikz{0.08}{\triall},T,2n) \ \bigcap \ P_{1,2}(\mytikz{0.08}{\triall},T,2n) &= 
\left\{\mytikz{0.2}{\triall}_{2n} \right\},\\
P_{1,0}(\mytikz{0.08}{\triall},T,2n) \ \bigcap \ P_{2,2}(\mytikz{0.08}{\triall},T,2n) &= 
\left\{\mytikz{0.2}{\triall}_{2n} \right\},\\
P_{1,2}(\mytikz{0.08}{\triall},T,2n) \ \bigcap \ P_{2,2}(\mytikz{0.08}{\triall},T,2n) &= 
\left\{\mytikz{0.2}{\triall}_{2n} \right\}.
\end{align*}
\end{lemma}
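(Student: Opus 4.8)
The plan is to prove all six identities simultaneously by induction on $n$, following verbatim the scheme of Lemma~\ref{lemma:instersectionEven}, the only structural change being that the ambient type is the all-corners-cut triangle $\mytikz{0.08}{\triall}$ rather than the full triangle $\mytikz{0.08}{\trinone}$. In every case the unfilled pattern $\mytikz{0.2}{\triall}_{2n}$ lies in each $P_{r,c}(\mytikz{0.08}{\triall},T,2n)$, so each of the six intersections automatically contains it; the entire content is to pin down the single possible corner-marked extra element and to show nothing else survives. I would first dispose of the base cases $n=2,3$ (sizes $4$ and $6$) by a direct enumeration of the finitely many admissible windows, exactly as $n=1,2$ were handled in Lemma~\ref{lemma:instersectionEven}.

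For the inclusion ``$\subseteq$'' at the step $n=p+1$, take $x$ in one of the six intersections at size $2p+2$ and extract its three size-$2p$ sub-triangles $x[\,\cdot\,,0,0,2p]$, $x[\,\cdot\,,2,0,2p]$, $x[\,\cdot\,,2,4,2p]$, which together cover $x$. Using the index shift \eqref{eq:defofPrcExtension} as in the template, each of these again lies in the intersection of the two correspondingly shifted $P_{r,c}$-sets at size $2p$. The difference from the $\mytikz{0.08}{\trinone}$ case is that these sub-triangles inherit a cut corner of $x$ only where they actually meet one: the apex piece is of truncated type $\mytikz{0.08}{\tritop}$, while the two base pieces are bottom-cut triangles. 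Here I would invoke the already-established sibling lemmas for those truncated types, together with the induction hypothesis for $\mytikz{0.08}{\triall}$ applied on the central overlap, to conclude that every unit triangle of $x$ lying away from its three cut corners must be unfilled. Thus $x$ is completely determined by its three corner configurations, and the problem collapses to a finite local check.

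For the reverse inclusion ``$\supseteq$'', I would exhibit, for each claimed extra element, explicit super-tiles realizing it in both relevant phases: for instance in $P_{0,0}\cap P_{1,0}$ one produces the bottom-right-marked pattern both as $\mu(y)[\mytikz{0.08}{\triall},0,0,2n]$ and as $\mu(z)[\mytikz{0.08}{\triall},1,0,2n]$ for suitable $y,z\in P(\mytikz{0.08}{\triall},T,2n)$, reading the two flanking filled unit triangles directly off the Sierpi\'nski filling rule in \eqref{eq:DefOfMu}. Combined with the interior rigidity from the previous paragraph, this determines each intersection exactly. A useful consistency check is that the resulting list is complementary to Lemma~\ref{lemma:instersectionEven}: the three pairs meeting $(0,0)$ now each carry exactly one marking (at the bottom-right, bottom-left, and top corner respectively), while the three pairs avoiding $(0,0)$ carry none — the precise reverse of the full-triangle case.

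The main obstacle will be the corner bookkeeping. For each phase $(r,c)\in I$ one must determine precisely which filling of each cut corner is admissible, since whether the two unit triangles flanking a given cut may be filled depends on how that corner of the window aligns with the size-$2$ super-tile grid and on whether the adjoining block is a filled (Sierpi\'nski) super-tile. Getting these alignments right for all three corners across all six phase pairs, correctly matching the truncated sub-window types to the available sibling lemmas on the overlaps, and checking consistency with the forced-unfilled interior and with the enumerated base cases, is the delicate part; once these local determinations are settled, the induction itself is routine.
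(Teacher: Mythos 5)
Your proposal is correct and follows what the paper itself does for this lemma: the paper proves only Lemma~\ref{lemma:instersectionEven} in full and settles the present statement with ``can be proven in the same way'', i.e.\ enumeration of the base case plus the induction step through the three corner sub-windows at $(0,0)$, $(2,0)$, $(2,4)$, the index shift \eqref{eq:defofPrcExtension}, and the fact that these windows cover the whole pattern. Where you genuinely deviate is in the typing of the pieces. The literal transcription of the template keeps all three sub-windows of type $\mytikz{0.08}{\triall}$ --- such windows are always extractable, since this type requires the fewest unit triangles --- and applies the lemma's own induction hypothesis to each of them; the mutual overlaps of the pieces then rule out every marking except one sitting at a cut corner of $x$ itself. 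You instead type the pieces by their inherited cuts and call on the already-established lemmas for the truncated types. That reduction does work, and it even buys a simplification: since those lemmas hold for all sizes $\geq 4$, your step is not really an induction at all --- the $\mytikz{0.08}{\triall}$ statement at size $2n\geq 6$ follows outright from the sibling lemmas at size $2n-2$, and your appeal to ``the induction hypothesis for $\mytikz{0.08}{\triall}$ on the central overlap'' is superfluous. One detail in your plan must be repaired, however: each base piece inherits exactly \emph{one} cut bottom corner, and ``one bottom corner cut'' is not a type in $\mathcal{A}$ (see \eqref{eq:defOfTypes}), so there is no sibling lemma for it as stated. You must take these windows to be of type $\mytikz{0.08}{\trilower}$, which simply ignores the corner that is present, and take the apex window to be of type $\mytikz{0.08}{\tritop}$; with that choice the relevant $\mytikz{0.08}{\tritop}$- and $\mytikz{0.08}{\trilower}$-intersections (singletons or two-element sets, exactly as in those lemmas) force precisely the claimed sets for all six pairs, in agreement with the complementarity check you describe.
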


\begin{lemma}
Let $n\geq2$. Then 
\begin{align*}
P_{0,0}(\mytikz{0.08}{\triall},T,2n+1) \ \bigcap \ P_{1,0}(\mytikz{0.08}{\triall},T,2n+1) &= 
\left\{\mytikz{0.2}{\triall}_{2n+1} \right\},\\
P_{0,0}(\mytikz{0.08}{\triall},T,2n+1) \ \bigcap \ P_{1,2}(\mytikz{0.08}{\triall},T,2n+1) &= 
\left\{\mytikz{0.2}{\triall}_{2n+1} \right\},\\
P_{0,0}(\mytikz{0.08}{\triall},T,2n+1) \ \bigcap \ P_{2,2}(\mytikz{0.08}{\triall},T,2n+1) &= 
\left\{\mytikz{0.2}{\triall}_{2n+1}, \mytikz{0.2}{\triall \fill (1.500, 2.598) -- (1.250, 2.165) -- (2.750, 2.165) -- (2.500, 2.598) -- cycle;}_{2n+1} \right\}, \\
P_{1,0}(\mytikz{0.08}{\triall},T,2n+1) \ \bigcap \ P_{1,2}(\mytikz{0.08}{\triall},T,2n+1) &= 
\left\{\mytikz{0.2}{\triall}_{2n+1} \right\},\\
P_{1,0}(\mytikz{0.08}{\triall},T,2n+1) \ \bigcap \ P_{2,2}(\mytikz{0.08}{\triall},T,2n+1) &= 
\left\{\mytikz{0.2}{\triall}_{2n+1}, \mytikz{0.2}{\triall \fill (0.500, 0.866) -- (1.000, 0.000) -- (1.500, 0.000) -- (0.750, 1.299) -- cycle;}_{2n+1} \right\}, \\
P_{1,2}(\mytikz{0.08}{\triall},T,2n+1) \ \bigcap \ P_{2,2}(\mytikz{0.08}{\triall},T,2n+1) &= 
\left\{\mytikz{0.2}{\triall}_{2n+1}, \mytikz{0.2}{\triall \fill (3.500, 0.866) -- (3.000, 0.000) -- (2.500, 0.000) -- (3.250, 1.299) -- cycle;}_{2n+1} \right\}.
\end{align*}
\end{lemma}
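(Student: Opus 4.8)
The plan is to prove all six equalities simultaneously by induction on $n$, mirroring exactly the argument given for Lemma~\ref{lemma:instersectionEven}. For each equality the inclusion $\supseteq$ is the easy direction: the fully unfilled pattern of type $\mytikz{0.08}{\triall}$ and size $2n+1$ lies in every $P_{r,c}(\mytikz{0.08}{\triall},T,2n+1)$, since it is obtained as $\mu(x)[\mytikz{0.08}{\triall},r,c,2n+1]$ for $x$ the unfilled pattern, read off at the relevant offset. For the three equalities that carry an extra marked pattern I would exhibit an explicit seed $x\in P(\mytikz{0.08}{\triall},T,2n+1)$ whose substitution $\mu(x)$, windowed at each of the two relevant offsets, produces the claimed corner- or side-marked pattern; the two filled upward unit triangles flanking a cut corner are precisely the configuration encoded by the marked-side notation. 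The real content is the reverse inclusion $\subseteq$.

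For the base of the induction I would verify the statement directly for the small cases $n=2$ (size $5$) and $n=3$ (size $7$) by enumerating the sets $P_{r,c}(\mytikz{0.08}{\triall},T,m)$ from $T_k$ for $k$ large enough that the plateau of Lemma~\ref{lemma:uniqueplateau} has been reached, so that these enumerations genuinely capture all patterns of $T$. Two base cases are needed because the induction step lowers the size by $2$, from $2n+1$ down to $2n-1$, so the recursion must be anchored at the smallest admissible index and one step above it, just as the even-size argument was anchored at $n=1,2$.

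For the induction step, take a pattern $x$ in one of the six intersections at size $2n+1$. Membership $x\in P_{r,c}(\mytikz{0.08}{\triall},T,2n+1)$ means $x=\mu(y)[\mytikz{0.08}{\triall},r,c,2n+1]$ for some $y\in P(\mytikz{0.08}{\triall},T,2n+1)$, so the block structure \eqref{eq:DefTnBlock} of $\mu$ lets me express three overlapping corner subpatterns of $x$ — each of type $\mytikz{0.08}{\triall}$ and size $2n-1$, anchored at the top corner and the two bottom corners — as elements of $P_{r',c'}(\mytikz{0.08}{\triall},T,2n-1)$ for appropriately shifted indices, using the periodicity \eqref{eq:defofPrcExtension}. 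I would then show that each of these three subpatterns falls into the corresponding intersection one size-step down, so that the induction hypothesis pins each of them to its listed small pattern. Since the three size-$(2n-1)$ subtriangles cover all of $x$, reassembling them forces the filled cells of $x$ into exactly the configuration of the claimed element(s), giving $x\in S$ and hence $\subseteq$.

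The main obstacle is the bookkeeping around the three cut corners together with the parity shift. Unlike the $\mytikz{0.08}{\trinone}$ case, the type $\mytikz{0.08}{\triall}$ has all three corners removed, so for each of the three corner subpatterns I must track precisely which reduced intersection it lands in and which marked configuration can survive, and then verify that the \emph{only} admissible marking is the one listed — for instance, that $P_{0,0}\cap P_{2,2}$ admits the top-side marking but none of the bottom-corner markings. This is exactly where the odd case diverges from the even-size lemma: the parity of $2n+1$ determines whether the filled unit triangles produced by $\mu$ adjacent to a cut corner land inside or outside the windowed pattern, and confirming this compatibility separately for all six intersections is the delicate part. Everything else is the routine substitution-and-induction scaffolding already established in Lemma~\ref{lemma:instersectionEven}.
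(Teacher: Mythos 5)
Your proposal is correct and follows essentially the same route as the paper, which offers no separate argument for this lemma but simply states that it ``can be proven in the same way as Lemma~\ref{lemma:instersectionEven}'': your plan---base cases certified complete via Lemma~\ref{lemma:uniqueplateau}, then an induction in which the three corner subpatterns of size $2n-1$ are placed, via the periodicity \eqref{eq:defofPrcExtension}, into the corresponding intersections one step down and reassembled---is exactly that induction transplanted to the odd-size, all-corners-cut type. The additional bookkeeping you single out (which marked configuration can survive in which of the six intersections, and the parity effect on where the filled triangles flanking a cut corner land) is precisely the detail the paper leaves implicit, and your treatment of it is consistent with the stated sets.
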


\begin{lemma}
Let $n\geq2$. Then 
\begin{align*}
P_{0,0}(\mytikz{0.08}{\tridown},T,2n) \ \bigcap \ P_{1,0}(\mytikz{0.08}{\tridown},T,2n) &= 
\left\{\mytikz{0.2}{\tridown}_{2n} \right\},\\
P_{0,0}(\mytikz{0.08}{\tridown},T,2n) \ \bigcap \ P_{1,2}(\mytikz{0.08}{\tridown},T,2n) &= 
\left\{\mytikz{0.2}{\tridown}_{2n} \right\},\\
P_{0,0}(\mytikz{0.08}{\tridown},T,2n) \ \bigcap \ P_{2,2}(\mytikz{0.08}{\tridown},T,2n) &= 
\left\{\mytikz{0.2}{\tridown}_{2n}, \mytikz{0.2}{\tridown \fill (0.000, 3.464) -- (4.000, 3.464) -- (3.750, 3.031) -- (0.250, 3.031) -- cycle;}_{2n} \right\}, \\
P_{1,0}(\mytikz{0.08}{\tridown},T,2n) \ \bigcap \ P_{1,2}(\mytikz{0.08}{\tridown},T,2n) &= 
\left\{\mytikz{0.2}{\tridown}_{2n} \right\},\\
P_{1,0}(\mytikz{0.08}{\tridown},T,2n) \ \bigcap \ P_{2,2}(\mytikz{0.08}{\tridown},T,2n) &= 
\left\{\mytikz{0.2}{\tridown}_{2n}, \mytikz{0.2}{\tridown \fill (4.000, 3.464) -- (2.000, 0.000) -- (1.750, 0.433) -- (3.500, 3.464) -- cycle;}_{2n} \right\}, \\
P_{1,2}(\mytikz{0.08}{\tridown},T,2n) \ \bigcap \ P_{2,2}(\mytikz{0.08}{\tridown},T,2n) &= 
\left\{\mytikz{0.2}{\tridown}_{2n}, \mytikz{0.2}{\tridown \fill (0.000, 3.464) -- (0.500, 3.464) -- (2.250, 0.433) -- (2.000, 0.000) -- cycle;}_{2n} \right\}.
\end{align*}
\end{lemma}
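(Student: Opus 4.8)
The plan is to prove all six identities simultaneously by induction on $n$, reproducing verbatim the scheme of Lemma~\ref{lemma:instersectionEven}; only the bookkeeping of which corner carries filled triangles changes from line to line. Each identity is an equality of sets, so for every one I would verify the two inclusions separately. For $\supseteq$ it suffices to exhibit each listed pattern in the relevant intersection: the all-unfilled downward pattern occurs inside an arbitrarily large empty downward triangle (a hole of $T$) and hence at every phase, while the three patterns carrying a filled side are produced in the base cases and then transported to all larger sizes by the phase-periodicity \eqref{eq:defofPrcExtension}. The base cases $n=2,3$ (downward patterns of sizes $4$ and $6$) I would dispose of by direct enumeration, exactly as the small even sizes were handled for Lemma~\ref{lemma:instersectionEven}.

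For the inclusion $\subseteq$ and the induction step, I take $x\in P_{r,c}(\mytikz{0.08}{\tridown},T,2n)\cap P_{r',c'}(\mytikz{0.08}{\tridown},T,2n)$ with $(r,c),(r',c')\in I$. A downward triangle of size $2n$ is covered by its three corner sub-triangles of size $2n-2$ --- the two top corners and the bottom apex --- each of which is of the form $x[\mytikz{0.08}{\tridown},\cdot,\cdot,2n-2]$ for a starting index obtained from that of $x$ by a shift $(2s,4t)$. The decisive point, identical to the one used for Lemma~\ref{lemma:instersectionEven}, is that such a shift preserves the phase class by \eqref{eq:defofPrcExtension}; consequently each corner sub-triangle again lies in the intersection $P_{r,c}\cap P_{r',c'}$ at level $2n-2$, where the induction hypothesis supplies the admissible menu. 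Gluing the three corners along their shared edges then leaves precisely the patterns on the right-hand side: in the three lines whose intersection is a single element, all three corners are forced unfilled and so is $x$.

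The new ingredient, absent from \eqref{eq:instersectionEven1}, is the filled side appearing in the three two-element identities. There the induction hypothesis makes one of the corner intersections two-element, its nontrivial member carrying filled unit triangles along one of its sides; I would then have to check that this filled strip is forced to run along the entire corresponding side of $x$ while the other corners remain unfilled, and --- crucially --- that it cannot seed any further filled triangle in the interior. I expect this propagation/consistency check along the shared edges to be the main obstacle, and it is exactly here that the defining feature of $\mu$, namely that it produces no filled downward unit triangles (the remark following \eqref{eq:DefOfMu}), must be invoked to rule out spurious fillings and close the induction.
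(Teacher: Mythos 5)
Your proposal is correct and is essentially the paper's own proof: the paper disposes of this lemma with the remark that it can be proven in the same way as Lemma~\ref{lemma:instersectionEven}, whose proof is exactly your scheme --- small cases by enumeration, then an induction step that places a pattern of the intersection over its three corner subpatterns of the next smaller even size at phase-preserving offsets (for downward patterns $(0,0)$, $(0,4)$, $(2,0)$, all congruent to $(0,0)$ modulo $(2,4)$) and glues them using the induction hypothesis. Two minor inaccuracies, neither of which changes the verdict: the $\supseteq$ direction is not obtained by ``transporting'' patterns via \eqref{eq:defofPrcExtension}, since that identification acts on positions at a fixed size rather than across sizes, so membership of the side-filled patterns must instead be checked by exhibiting occurrences in $T$ at every size (a point on which the paper is equally silent); and the consistency check you single out as the main obstacle is in fact immediate, because the three corner subpatterns cover the whole pattern and are each pinned down by the induction hypothesis, so no appeal to the absence of filled downward unit triangles is needed at that step.
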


\begin{lemma}
\label{lemma:instersectionOddDown}
Let $n\geq2$. Then 
\begin{align*}
P_{0,0}(\mytikz{0.08}{\tridown},T,2n+1) \ \bigcap \ P_{1,0}(\mytikz{0.08}{\tridown},T,2n+1) &= 
\left\{\mytikz{0.2}{\tridown}_{2n+1} \right\},\\
P_{0,0}(\mytikz{0.08}{\tridown},T,2n+1) \ \bigcap \ P_{1,2}(\mytikz{0.08}{\tridown},T,2n+1) &= 
\left\{\mytikz{0.2}{\tridown}_{2n+1}, \mytikz{0.2}{\tridown \fill (4.000, 3.464) -- (2.000, 0.000) -- (1.750, 0.433) -- (3.500, 3.464) -- cycle;}_{2n+1} \right\},\\
P_{0,0}(\mytikz{0.08}{\tridown},T,2n+1) \ \bigcap \ P_{2,2}(\mytikz{0.08}{\tridown},T,2n+1) &= 
\left\{\mytikz{0.2}{\tridown}_{2n+1} \right\}, \\
P_{1,0}(\mytikz{0.08}{\tridown},T,2n+1) \ \bigcap \ P_{1,2}(\mytikz{0.08}{\tridown},T,2n+1) &= 
\left\{\mytikz{0.2}{\tridown}_{2n+1}, \mytikz{0.2}{\tridown \fill (0.000, 3.464) -- (4.000, 3.464) -- (3.750, 3.031) -- (0.250, 3.031) -- cycle;}_{2n+1} \right\},\\
P_{1,0}(\mytikz{0.08}{\tridown},T,2n+1) \ \bigcap \ P_{2,2}(\mytikz{0.08}{\tridown},T,2n+1) &= 
\left\{\mytikz{0.2}{\tridown}_{2n+1} \right\}, \\
P_{1,2}(\mytikz{0.08}{\tridown},T,2n+1) \ \bigcap \ P_{2,2}(\mytikz{0.08}{\tridown},T,2n+1) &= 
\left\{\mytikz{0.2}{\tridown}_{2n+1}, \mytikz{0.2}{\tridown \fill (0.000, 3.464) -- (0.500, 3.464) -- (2.250, 0.433) -- (2.000, 0.000) -- cycle;}_{2n+1} \right\}.
\end{align*}
\end{lemma}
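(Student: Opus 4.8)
The plan is to prove all six identities simultaneously by induction on $n$, exactly as in the proof of Lemma~\ref{lemma:instersectionEven}; the only genuine differences are geometric, arising from the downward orientation and from the odd side length $2n+1$. For each identity I would establish the two inclusions separately. The inclusion $\supseteq$ is the easy direction: the all-unfilled pattern $\mytikz{0.08}{\tridown}$ of size $2n+1$ lies in every $P_{r,c}(\mytikz{0.08}{\tridown},T,2n+1)$, since it is read off from $\mu$ of an all-unfilled downward pattern at any admissible phase, so it belongs to each pairwise intersection; and for the three intersections carrying a second element I would exhibit that marked-side pattern explicitly as $\mu(x)[\mytikz{0.08}{\tridown},r,c,2n+1]$ realised simultaneously at both phases $(r,c)$, using that along the common boundary of two adjacent size-$2$ super-tiles the upward unit triangles are forced to be filled. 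The harder inclusion $\subseteq$ is the inductive core.

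For $\subseteq$ I would first dispatch the base case $n=2$ (side length $5$), and if the extraction below needs room also $n=3$, by finite enumeration: by Lemma~\ref{lemma:uniqueplateau} the sets $P_{r,c}(\mytikz{0.08}{\tridown},T,5)$ stabilise at a finite level $T_k$, so each of the six intersections can be computed outright and checked against the claimed right-hand sides. The inductive step then proceeds as follows. Fix one index pair $(r_1,c_1),(r_2,c_2)\in I$ and take any $x$ in the corresponding intersection at size $2n+1$. Writing $x=\mu(y_i)[\mytikz{0.08}{\tridown},r_i,c_i,2n+1]$ for the two phases $i=1,2$ and using locality of $\mu$ together with the period-$(2,4)$ extension \eqref{eq:defofPrcExtension}, I would extract a small fixed family of size-$(2n-1)$ subpatterns of $x$, located at suitable even row/column offsets so that each again lies in an intersection of two $P_{r,c}$-sets of the same tabulated shape after reducing the shifted indices modulo $(2,4)$. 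The induction hypothesis then forces each such subpattern to be all-unfilled or one of the permitted marked patterns, and overlaying these constraints over the whole of $x$ pins $x$ down to precisely the one or two patterns listed.

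The reason the parity matters — and the reason the odd table differs from the even one — is that shifting a size-$(2n+1)$ downward triangle by the super-tile period lands its inner subdivision on the opposite alignment from the even case, so the side that can legitimately be marked is rotated among top, left and right. Concretely I expect the three nontrivial intersections to be exactly $P_{0,0}\cap P_{1,2}$ (right side), $P_{1,0}\cap P_{1,2}$ (top side) and $P_{1,2}\cap P_{2,2}$ (left side), while the remaining three force the all-unfilled pattern; tracking which phase pair produces which marked side is just the bookkeeping of where two filled super-tile boundaries can coincide.

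The main obstacle will be this geometric bookkeeping in the inductive step: choosing the right fixed family of size-$(2n-1)$ subpatterns and verifying that their shifted index pairs really do reduce, modulo $(2,4)$, to intersections already covered by the induction hypothesis rather than to some configuration outside the table. Getting the column arithmetic modulo $4$ to match the claimed marked side, and confirming that no additional partial filling survives all the overlapping constraints, is the delicate point; once the correct subpattern family is identified, each of the six cases closes mechanically, just as the single displayed case does in Lemma~\ref{lemma:instersectionEven}.
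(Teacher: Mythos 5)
Your proposal takes essentially the same route as the paper: the paper proves only Lemma~\ref{lemma:instersectionEven} in detail (induction on $n$, base cases by finite enumeration, inductive step extracting subpatterns at even row/column offsets and invoking the mod-$(2,4)$ periodicity of the $P_{r,c}$ sets together with the induction hypothesis to pin the pattern down), and then declares that the present lemma ``can be proven in the same way,'' which is exactly the argument you outline, adapted to downward orientation and odd size. Your identification of the three nontrivial intersections and their marked sides agrees with the statement, and your added care about the $\supseteq$ direction and possible extra base cases only makes the sketch more explicit than the paper's.
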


We have now given the lemmas needed to prove the main result of this section. 

\begin{proof}[Proof of Theorem~\ref{tmm:additivity}]
From the lemmas above, (Lemma~\ref{lemma:instersectionEven} -- Lemma~\ref{lemma:instersectionOddDown}) we see that any of the sets $P_{r,c}(\alpha, T, n)$ contains the pattern with only unfilled unit-length triangles, meaning that this pattern is counted 4 times in the sum
\begin{equation}
\label{eq:sumPaTnm}
	\sum_{(r,c) \in I}|P_{r,c}(\alpha, T, n)|,
\end{equation}
that is, 3 times too many. Moreover, in each of the lemmas we see that there are precisely 3 intersections where 1 extra pattern is counted, besides the one with only unfilled triangles. Hence there is an additional over-count of 3 patterns in \eqref{eq:sumPaTnm}. This leads to the equality in \eqref{eq:additivity}.
\end{proof}

\section{Recursions}
\label{sec:recursions}

The aim of this section is to give a list of recursion relations for the size of the sets $P_{r,c}(\alpha, T, n)$. We start by introducing the following short hand 
\begin{equation}
\label{eq:defofABCD}
\begin{split}
	A_n &:= |P(\mytikz{0.08}{\trinone},  T, n)|, \\
	B_n &:= |P(\mytikz{0.08}{\tritop},   T, n)|, \\
	C_n &:= |P(\mytikz{0.08}{\trilower}, T, n)|, \\
	D_n &:= |P(\mytikz{0.08}{\triall},   T, n)|, 
\end{split}
\end{equation}
and 
\begin{equation}
\label{eq:defofAprim}
	A'_n := |P(\mytikz{0.08}{\tridown}, T, n)|. 
\end{equation}
The two quantities above, $A_n$ and $A_n'$, are the ones used in Theorem~\ref{thm:main}.

With the help of Theorem~\ref{tmm:additivity}, symmetry, and the definition of the substitution $\mu$ from \eqref{eq:DefOfMu} we now have for $n \geq 2$
\begin{equation}
\label{eq:recAeven}
\begin{split}
A_{2n} 
	&= -6 + \sum_{(r,c)\in I} |P_{r,c}(\mytikz{0.08}{\trinone},T, 2n)| \\
	&= -6 + A_{n} + B_{n+1} + B_{n+1} + B_{n+1}.
\end{split}
\end{equation}
See Figure~\ref{fig:recAeven} for a visualisation of the deduction of this recursion.

\begin{figure}[ht]
\begin{center}
\begin{tabular}{cc}
	\begin{tikzpicture}[scale = 0.4]
		\draw[color=black, fill=cyan] (6.000, 10.392) -- (3.000, 5.196) -- (9.000, 5.196) -- cycle;
		\draw[pattern={north east lines}] (6.000, 10.392) -- (3.000, 5.196) -- (9.000, 5.196) -- cycle;
		\draw[thin,dotted] (11.173, 0.300) -- (11.700, 1.212);
\draw[thin] (10.173, 0.300) -- (11.700, 2.944);
\draw[thin,dotted] (9.173, 0.300) -- (11.700, 4.677);
\draw[thin] (8.173, 0.300) -- (11.700, 6.409);
\draw[thin,dotted] (7.173, 0.300) -- (11.700, 8.141);
\draw[thin] (6.173, 0.300) -- (11.700, 9.873);
\draw[thin,dotted] (5.173, 0.300) -- (11.700, 11.605);
\draw[thin] (4.173, 0.300) -- (10.755, 11.700);
\draw[thin,dotted] (3.173, 0.300) -- (9.755, 11.700);
\draw[thin] (2.173, 0.300) -- (8.755, 11.700);
\draw[thin,dotted] (1.173, 0.300) -- (7.755, 11.700);
\draw[thin] (0.300, 0.520) -- (6.755, 11.700);
\draw[thin,dotted] (0.300, 2.252) -- (5.755, 11.700);
\draw[thin] (0.300, 3.984) -- (4.755, 11.700);
\draw[thin,dotted] (0.300, 5.716) -- (3.755, 11.700);
\draw[thin] (0.300, 7.448) -- (2.755, 11.700);
\draw[thin,dotted] (0.300, 9.180) -- (1.755, 11.700);
\draw[thin] (0.300, 10.912) -- (0.755, 11.700);
\draw[thin,dotted] (0.300, 1.212) -- (0.827, 0.300);
\draw[thin] (0.300, 2.944) -- (1.827, 0.300);
\draw[thin,dotted] (0.300, 4.677) -- (2.827, 0.300);
\draw[thin] (0.300, 6.409) -- (3.827, 0.300);
\draw[thin,dotted] (0.300, 8.141) -- (4.827, 0.300);
\draw[thin] (0.300, 9.873) -- (5.827, 0.300);
\draw[thin,dotted] (0.300, 11.605) -- (6.827, 0.300);
\draw[thin] (1.245, 11.700) -- (7.827, 0.300);
\draw[thin,dotted] (2.245, 11.700) -- (8.827, 0.300);
\draw[thin] (3.245, 11.700) -- (9.827, 0.300);
\draw[thin,dotted] (4.245, 11.700) -- (10.827, 0.300);
\draw[thin] (5.245, 11.700) -- (11.700, 0.520);
\draw[thin,dotted] (6.245, 11.700) -- (11.700, 2.252);
\draw[thin] (7.245, 11.700) -- (11.700, 3.984);
\draw[thin,dotted] (8.245, 11.700) -- (11.700, 5.716);
\draw[thin] (9.245, 11.700) -- (11.700, 7.448);
\draw[thin,dotted] (10.245, 11.700) -- (11.700, 9.180);
\draw[thin] (11.245, 11.700) -- (11.700, 10.912);
\draw[thin,dotted] (0.300, 0.866) -- (11.700, 0.866);
\draw[thin] (0.300, 1.732) -- (11.700, 1.732);
\draw[thin,dotted] (0.300, 2.598) -- (11.700, 2.598);
\draw[thin] (0.300, 3.464) -- (11.700, 3.464);
\draw[thin,dotted] (0.300, 4.330) -- (11.700, 4.330);
\draw[thin] (0.300, 5.196) -- (11.700, 5.196);
\draw[thin,dotted] (0.300, 6.062) -- (11.700, 6.062);
\draw[thin] (0.300, 6.928) -- (11.700, 6.928);
\draw[thin,dotted] (0.300, 7.794) -- (11.700, 7.794);
\draw[thin] (0.300, 8.660) -- (11.700, 8.660);
\draw[thin,dotted] (0.300, 9.526) -- (11.700, 9.526);
\draw[thin] (0.300, 10.392) -- (11.700, 10.392);
\draw[thin,dotted] (0.300, 11.258) -- (11.700, 11.258);		
		\draw(6,-0.75) node{\footnotesize $(r,c) = (0,0)$};	
	\end{tikzpicture}
&
	\begin{tikzpicture}[scale = 0.4]
		\draw[color=black, fill=cyan] (5.500, 9.526) -- (2.500, 4.330) -- (8.500, 4.330) -- cycle;
		\draw[pattern={north east lines}] (6.000,10.392) -- (2.000, 3.464) -- (8.000, 3.464) -- (9.000, 5.196) -- cycle;
		\draw[thin,dotted] (11.173, 0.300) -- (11.700, 1.212);
\draw[thin] (10.173, 0.300) -- (11.700, 2.944);
\draw[thin,dotted] (9.173, 0.300) -- (11.700, 4.677);
\draw[thin] (8.173, 0.300) -- (11.700, 6.409);
\draw[thin,dotted] (7.173, 0.300) -- (11.700, 8.141);
\draw[thin] (6.173, 0.300) -- (11.700, 9.873);
\draw[thin,dotted] (5.173, 0.300) -- (11.700, 11.605);
\draw[thin] (4.173, 0.300) -- (10.755, 11.700);
\draw[thin,dotted] (3.173, 0.300) -- (9.755, 11.700);
\draw[thin] (2.173, 0.300) -- (8.755, 11.700);
\draw[thin,dotted] (1.173, 0.300) -- (7.755, 11.700);
\draw[thin] (0.300, 0.520) -- (6.755, 11.700);
\draw[thin,dotted] (0.300, 2.252) -- (5.755, 11.700);
\draw[thin] (0.300, 3.984) -- (4.755, 11.700);
\draw[thin,dotted] (0.300, 5.716) -- (3.755, 11.700);
\draw[thin] (0.300, 7.448) -- (2.755, 11.700);
\draw[thin,dotted] (0.300, 9.180) -- (1.755, 11.700);
\draw[thin] (0.300, 10.912) -- (0.755, 11.700);
\draw[thin,dotted] (0.300, 1.212) -- (0.827, 0.300);
\draw[thin] (0.300, 2.944) -- (1.827, 0.300);
\draw[thin,dotted] (0.300, 4.677) -- (2.827, 0.300);
\draw[thin] (0.300, 6.409) -- (3.827, 0.300);
\draw[thin,dotted] (0.300, 8.141) -- (4.827, 0.300);
\draw[thin] (0.300, 9.873) -- (5.827, 0.300);
\draw[thin,dotted] (0.300, 11.605) -- (6.827, 0.300);
\draw[thin] (1.245, 11.700) -- (7.827, 0.300);
\draw[thin,dotted] (2.245, 11.700) -- (8.827, 0.300);
\draw[thin] (3.245, 11.700) -- (9.827, 0.300);
\draw[thin,dotted] (4.245, 11.700) -- (10.827, 0.300);
\draw[thin] (5.245, 11.700) -- (11.700, 0.520);
\draw[thin,dotted] (6.245, 11.700) -- (11.700, 2.252);
\draw[thin] (7.245, 11.700) -- (11.700, 3.984);
\draw[thin,dotted] (8.245, 11.700) -- (11.700, 5.716);
\draw[thin] (9.245, 11.700) -- (11.700, 7.448);
\draw[thin,dotted] (10.245, 11.700) -- (11.700, 9.180);
\draw[thin] (11.245, 11.700) -- (11.700, 10.912);
\draw[thin,dotted] (0.300, 0.866) -- (11.700, 0.866);
\draw[thin] (0.300, 1.732) -- (11.700, 1.732);
\draw[thin,dotted] (0.300, 2.598) -- (11.700, 2.598);
\draw[thin] (0.300, 3.464) -- (11.700, 3.464);
\draw[thin,dotted] (0.300, 4.330) -- (11.700, 4.330);
\draw[thin] (0.300, 5.196) -- (11.700, 5.196);
\draw[thin,dotted] (0.300, 6.062) -- (11.700, 6.062);
\draw[thin] (0.300, 6.928) -- (11.700, 6.928);
\draw[thin,dotted] (0.300, 7.794) -- (11.700, 7.794);
\draw[thin] (0.300, 8.660) -- (11.700, 8.660);
\draw[thin,dotted] (0.300, 9.526) -- (11.700, 9.526);
\draw[thin] (0.300, 10.392) -- (11.700, 10.392);
\draw[thin,dotted] (0.300, 11.258) -- (11.700, 11.258);		
		\draw(6,-0.75) node{\footnotesize $(r,c) = (1,0)$};	
	\end{tikzpicture}
\\
	\begin{tikzpicture}[scale = 0.4]
		\draw[color=black, fill=cyan] (6.500, 9.526) -- (3.500, 4.330) -- (9.500, 4.330) -- cycle;
		\draw[pattern={north east lines}] (6.000, 10.392) -- (3.000, 5.196) -- (4.000, 3.464) -- (10.000, 3.464) -- cycle;
		\draw[thin,dotted] (11.173, 0.300) -- (11.700, 1.212);
\draw[thin] (10.173, 0.300) -- (11.700, 2.944);
\draw[thin,dotted] (9.173, 0.300) -- (11.700, 4.677);
\draw[thin] (8.173, 0.300) -- (11.700, 6.409);
\draw[thin,dotted] (7.173, 0.300) -- (11.700, 8.141);
\draw[thin] (6.173, 0.300) -- (11.700, 9.873);
\draw[thin,dotted] (5.173, 0.300) -- (11.700, 11.605);
\draw[thin] (4.173, 0.300) -- (10.755, 11.700);
\draw[thin,dotted] (3.173, 0.300) -- (9.755, 11.700);
\draw[thin] (2.173, 0.300) -- (8.755, 11.700);
\draw[thin,dotted] (1.173, 0.300) -- (7.755, 11.700);
\draw[thin] (0.300, 0.520) -- (6.755, 11.700);
\draw[thin,dotted] (0.300, 2.252) -- (5.755, 11.700);
\draw[thin] (0.300, 3.984) -- (4.755, 11.700);
\draw[thin,dotted] (0.300, 5.716) -- (3.755, 11.700);
\draw[thin] (0.300, 7.448) -- (2.755, 11.700);
\draw[thin,dotted] (0.300, 9.180) -- (1.755, 11.700);
\draw[thin] (0.300, 10.912) -- (0.755, 11.700);
\draw[thin,dotted] (0.300, 1.212) -- (0.827, 0.300);
\draw[thin] (0.300, 2.944) -- (1.827, 0.300);
\draw[thin,dotted] (0.300, 4.677) -- (2.827, 0.300);
\draw[thin] (0.300, 6.409) -- (3.827, 0.300);
\draw[thin,dotted] (0.300, 8.141) -- (4.827, 0.300);
\draw[thin] (0.300, 9.873) -- (5.827, 0.300);
\draw[thin,dotted] (0.300, 11.605) -- (6.827, 0.300);
\draw[thin] (1.245, 11.700) -- (7.827, 0.300);
\draw[thin,dotted] (2.245, 11.700) -- (8.827, 0.300);
\draw[thin] (3.245, 11.700) -- (9.827, 0.300);
\draw[thin,dotted] (4.245, 11.700) -- (10.827, 0.300);
\draw[thin] (5.245, 11.700) -- (11.700, 0.520);
\draw[thin,dotted] (6.245, 11.700) -- (11.700, 2.252);
\draw[thin] (7.245, 11.700) -- (11.700, 3.984);
\draw[thin,dotted] (8.245, 11.700) -- (11.700, 5.716);
\draw[thin] (9.245, 11.700) -- (11.700, 7.448);
\draw[thin,dotted] (10.245, 11.700) -- (11.700, 9.180);
\draw[thin] (11.245, 11.700) -- (11.700, 10.912);
\draw[thin,dotted] (0.300, 0.866) -- (11.700, 0.866);
\draw[thin] (0.300, 1.732) -- (11.700, 1.732);
\draw[thin,dotted] (0.300, 2.598) -- (11.700, 2.598);
\draw[thin] (0.300, 3.464) -- (11.700, 3.464);
\draw[thin,dotted] (0.300, 4.330) -- (11.700, 4.330);
\draw[thin] (0.300, 5.196) -- (11.700, 5.196);
\draw[thin,dotted] (0.300, 6.062) -- (11.700, 6.062);
\draw[thin] (0.300, 6.928) -- (11.700, 6.928);
\draw[thin,dotted] (0.300, 7.794) -- (11.700, 7.794);
\draw[thin] (0.300, 8.660) -- (11.700, 8.660);
\draw[thin,dotted] (0.300, 9.526) -- (11.700, 9.526);
\draw[thin] (0.300, 10.392) -- (11.700, 10.392);
\draw[thin,dotted] (0.300, 11.258) -- (11.700, 11.258);		
		\draw(6,-0.75) node{\footnotesize $(r,c) = (1,2)$};	
	\end{tikzpicture}
&
	\begin{tikzpicture}[scale = 0.4]
		\draw[color=black, fill=cyan] (6.000, 8.660) -- (3.000, 3.464) -- (9.000, 3.464) -- cycle;
		\draw[pattern={north east lines}] (5.000, 8.660) -- (2.000, 3.464) -- (10.000, 3.464) -- (7.000, 8.660) -- cycle;
		\draw[thin,dotted] (11.173, 0.300) -- (11.700, 1.212);
\draw[thin] (10.173, 0.300) -- (11.700, 2.944);
\draw[thin,dotted] (9.173, 0.300) -- (11.700, 4.677);
\draw[thin] (8.173, 0.300) -- (11.700, 6.409);
\draw[thin,dotted] (7.173, 0.300) -- (11.700, 8.141);
\draw[thin] (6.173, 0.300) -- (11.700, 9.873);
\draw[thin,dotted] (5.173, 0.300) -- (11.700, 11.605);
\draw[thin] (4.173, 0.300) -- (10.755, 11.700);
\draw[thin,dotted] (3.173, 0.300) -- (9.755, 11.700);
\draw[thin] (2.173, 0.300) -- (8.755, 11.700);
\draw[thin,dotted] (1.173, 0.300) -- (7.755, 11.700);
\draw[thin] (0.300, 0.520) -- (6.755, 11.700);
\draw[thin,dotted] (0.300, 2.252) -- (5.755, 11.700);
\draw[thin] (0.300, 3.984) -- (4.755, 11.700);
\draw[thin,dotted] (0.300, 5.716) -- (3.755, 11.700);
\draw[thin] (0.300, 7.448) -- (2.755, 11.700);
\draw[thin,dotted] (0.300, 9.180) -- (1.755, 11.700);
\draw[thin] (0.300, 10.912) -- (0.755, 11.700);
\draw[thin,dotted] (0.300, 1.212) -- (0.827, 0.300);
\draw[thin] (0.300, 2.944) -- (1.827, 0.300);
\draw[thin,dotted] (0.300, 4.677) -- (2.827, 0.300);
\draw[thin] (0.300, 6.409) -- (3.827, 0.300);
\draw[thin,dotted] (0.300, 8.141) -- (4.827, 0.300);
\draw[thin] (0.300, 9.873) -- (5.827, 0.300);
\draw[thin,dotted] (0.300, 11.605) -- (6.827, 0.300);
\draw[thin] (1.245, 11.700) -- (7.827, 0.300);
\draw[thin,dotted] (2.245, 11.700) -- (8.827, 0.300);
\draw[thin] (3.245, 11.700) -- (9.827, 0.300);
\draw[thin,dotted] (4.245, 11.700) -- (10.827, 0.300);
\draw[thin] (5.245, 11.700) -- (11.700, 0.520);
\draw[thin,dotted] (6.245, 11.700) -- (11.700, 2.252);
\draw[thin] (7.245, 11.700) -- (11.700, 3.984);
\draw[thin,dotted] (8.245, 11.700) -- (11.700, 5.716);
\draw[thin] (9.245, 11.700) -- (11.700, 7.448);
\draw[thin,dotted] (10.245, 11.700) -- (11.700, 9.180);
\draw[thin] (11.245, 11.700) -- (11.700, 10.912);
\draw[thin,dotted] (0.300, 0.866) -- (11.700, 0.866);
\draw[thin] (0.300, 1.732) -- (11.700, 1.732);
\draw[thin,dotted] (0.300, 2.598) -- (11.700, 2.598);
\draw[thin] (0.300, 3.464) -- (11.700, 3.464);
\draw[thin,dotted] (0.300, 4.330) -- (11.700, 4.330);
\draw[thin] (0.300, 5.196) -- (11.700, 5.196);
\draw[thin,dotted] (0.300, 6.062) -- (11.700, 6.062);
\draw[thin] (0.300, 6.928) -- (11.700, 6.928);
\draw[thin,dotted] (0.300, 7.794) -- (11.700, 7.794);
\draw[thin] (0.300, 8.660) -- (11.700, 8.660);
\draw[thin,dotted] (0.300, 9.526) -- (11.700, 9.526);
\draw[thin] (0.300, 10.392) -- (11.700, 10.392);
\draw[thin,dotted] (0.300, 11.258) -- (11.700, 11.258);		
		\draw(6,-0.75) node{\footnotesize $(r,c) = (2,2)$};	
	\end{tikzpicture}
\end{tabular}
\end{center}
\caption{The blue shaded regions represent elements of $P_{r,c}(\mytikz{0.08}{\protect\trinone},T, 2n)$, for the different values of $(r,c)$. The illustrations show how these are modified to reach the hatched regions, from which we can deduce the recursive expression in \eqref{eq:recAeven}.}
\label{fig:recAeven}
\end{figure}

In the same way, for odd side length, we have 
\begin{equation}
\label{eq:recAodd}
\begin{split}
A_{2n+1} 
	&= -6 + \sum_{(r,c)\in I} |P_{r,c}(\mytikz{0.08}{\trinone},T, 2n+1)| \\
	&= -6 + A_{n+1} + A_{n+1} + A_{n+1} + D_{n+2}.
\end{split}
\end{equation}
See Figure~\ref{fig:recAodd} for a visualisation of the deduction of this recursion.

\begin{figure}[ht]
\begin{center}
\begin{tabular}{cc}
	\begin{tikzpicture}[scale = 0.4]
		\draw[color=black, fill=cyan] (6.000, 10.392) -- (2.500, 4.330) -- (9.500, 4.330) -- cycle;
		\draw[pattern={north east lines}] (6.000, 10.392) -- (2.000, 3.464) -- (10.000, 3.464) -- cycle;
		\draw[thin,dotted] (11.173, 0.300) -- (11.700, 1.212);
\draw[thin] (10.173, 0.300) -- (11.700, 2.944);
\draw[thin,dotted] (9.173, 0.300) -- (11.700, 4.677);
\draw[thin] (8.173, 0.300) -- (11.700, 6.409);
\draw[thin,dotted] (7.173, 0.300) -- (11.700, 8.141);
\draw[thin] (6.173, 0.300) -- (11.700, 9.873);
\draw[thin,dotted] (5.173, 0.300) -- (11.700, 11.605);
\draw[thin] (4.173, 0.300) -- (10.755, 11.700);
\draw[thin,dotted] (3.173, 0.300) -- (9.755, 11.700);
\draw[thin] (2.173, 0.300) -- (8.755, 11.700);
\draw[thin,dotted] (1.173, 0.300) -- (7.755, 11.700);
\draw[thin] (0.300, 0.520) -- (6.755, 11.700);
\draw[thin,dotted] (0.300, 2.252) -- (5.755, 11.700);
\draw[thin] (0.300, 3.984) -- (4.755, 11.700);
\draw[thin,dotted] (0.300, 5.716) -- (3.755, 11.700);
\draw[thin] (0.300, 7.448) -- (2.755, 11.700);
\draw[thin,dotted] (0.300, 9.180) -- (1.755, 11.700);
\draw[thin] (0.300, 10.912) -- (0.755, 11.700);
\draw[thin,dotted] (0.300, 1.212) -- (0.827, 0.300);
\draw[thin] (0.300, 2.944) -- (1.827, 0.300);
\draw[thin,dotted] (0.300, 4.677) -- (2.827, 0.300);
\draw[thin] (0.300, 6.409) -- (3.827, 0.300);
\draw[thin,dotted] (0.300, 8.141) -- (4.827, 0.300);
\draw[thin] (0.300, 9.873) -- (5.827, 0.300);
\draw[thin,dotted] (0.300, 11.605) -- (6.827, 0.300);
\draw[thin] (1.245, 11.700) -- (7.827, 0.300);
\draw[thin,dotted] (2.245, 11.700) -- (8.827, 0.300);
\draw[thin] (3.245, 11.700) -- (9.827, 0.300);
\draw[thin,dotted] (4.245, 11.700) -- (10.827, 0.300);
\draw[thin] (5.245, 11.700) -- (11.700, 0.520);
\draw[thin,dotted] (6.245, 11.700) -- (11.700, 2.252);
\draw[thin] (7.245, 11.700) -- (11.700, 3.984);
\draw[thin,dotted] (8.245, 11.700) -- (11.700, 5.716);
\draw[thin] (9.245, 11.700) -- (11.700, 7.448);
\draw[thin,dotted] (10.245, 11.700) -- (11.700, 9.180);
\draw[thin] (11.245, 11.700) -- (11.700, 10.912);
\draw[thin,dotted] (0.300, 0.866) -- (11.700, 0.866);
\draw[thin] (0.300, 1.732) -- (11.700, 1.732);
\draw[thin,dotted] (0.300, 2.598) -- (11.700, 2.598);
\draw[thin] (0.300, 3.464) -- (11.700, 3.464);
\draw[thin,dotted] (0.300, 4.330) -- (11.700, 4.330);
\draw[thin] (0.300, 5.196) -- (11.700, 5.196);
\draw[thin,dotted] (0.300, 6.062) -- (11.700, 6.062);
\draw[thin] (0.300, 6.928) -- (11.700, 6.928);
\draw[thin,dotted] (0.300, 7.794) -- (11.700, 7.794);
\draw[thin] (0.300, 8.660) -- (11.700, 8.660);
\draw[thin,dotted] (0.300, 9.526) -- (11.700, 9.526);
\draw[thin] (0.300, 10.392) -- (11.700, 10.392);
\draw[thin,dotted] (0.300, 11.258) -- (11.700, 11.258);		
		\draw(6,-0.75) node{\footnotesize $(r,c) = (0,0)$};	
	\end{tikzpicture}
&
	\begin{tikzpicture}[scale = 0.4]
		\draw[color=black, fill=cyan] (5.500, 9.526) -- (2.000, 3.464) -- (9.000, 3.464) -- cycle;
		\draw[pattern={north east lines}] (6.000, 10.392) -- (2.000, 3.464) -- (10.000, 3.464) -- cycle;
		\draw[thin,dotted] (11.173, 0.300) -- (11.700, 1.212);
\draw[thin] (10.173, 0.300) -- (11.700, 2.944);
\draw[thin,dotted] (9.173, 0.300) -- (11.700, 4.677);
\draw[thin] (8.173, 0.300) -- (11.700, 6.409);
\draw[thin,dotted] (7.173, 0.300) -- (11.700, 8.141);
\draw[thin] (6.173, 0.300) -- (11.700, 9.873);
\draw[thin,dotted] (5.173, 0.300) -- (11.700, 11.605);
\draw[thin] (4.173, 0.300) -- (10.755, 11.700);
\draw[thin,dotted] (3.173, 0.300) -- (9.755, 11.700);
\draw[thin] (2.173, 0.300) -- (8.755, 11.700);
\draw[thin,dotted] (1.173, 0.300) -- (7.755, 11.700);
\draw[thin] (0.300, 0.520) -- (6.755, 11.700);
\draw[thin,dotted] (0.300, 2.252) -- (5.755, 11.700);
\draw[thin] (0.300, 3.984) -- (4.755, 11.700);
\draw[thin,dotted] (0.300, 5.716) -- (3.755, 11.700);
\draw[thin] (0.300, 7.448) -- (2.755, 11.700);
\draw[thin,dotted] (0.300, 9.180) -- (1.755, 11.700);
\draw[thin] (0.300, 10.912) -- (0.755, 11.700);
\draw[thin,dotted] (0.300, 1.212) -- (0.827, 0.300);
\draw[thin] (0.300, 2.944) -- (1.827, 0.300);
\draw[thin,dotted] (0.300, 4.677) -- (2.827, 0.300);
\draw[thin] (0.300, 6.409) -- (3.827, 0.300);
\draw[thin,dotted] (0.300, 8.141) -- (4.827, 0.300);
\draw[thin] (0.300, 9.873) -- (5.827, 0.300);
\draw[thin,dotted] (0.300, 11.605) -- (6.827, 0.300);
\draw[thin] (1.245, 11.700) -- (7.827, 0.300);
\draw[thin,dotted] (2.245, 11.700) -- (8.827, 0.300);
\draw[thin] (3.245, 11.700) -- (9.827, 0.300);
\draw[thin,dotted] (4.245, 11.700) -- (10.827, 0.300);
\draw[thin] (5.245, 11.700) -- (11.700, 0.520);
\draw[thin,dotted] (6.245, 11.700) -- (11.700, 2.252);
\draw[thin] (7.245, 11.700) -- (11.700, 3.984);
\draw[thin,dotted] (8.245, 11.700) -- (11.700, 5.716);
\draw[thin] (9.245, 11.700) -- (11.700, 7.448);
\draw[thin,dotted] (10.245, 11.700) -- (11.700, 9.180);
\draw[thin] (11.245, 11.700) -- (11.700, 10.912);
\draw[thin,dotted] (0.300, 0.866) -- (11.700, 0.866);
\draw[thin] (0.300, 1.732) -- (11.700, 1.732);
\draw[thin,dotted] (0.300, 2.598) -- (11.700, 2.598);
\draw[thin] (0.300, 3.464) -- (11.700, 3.464);
\draw[thin,dotted] (0.300, 4.330) -- (11.700, 4.330);
\draw[thin] (0.300, 5.196) -- (11.700, 5.196);
\draw[thin,dotted] (0.300, 6.062) -- (11.700, 6.062);
\draw[thin] (0.300, 6.928) -- (11.700, 6.928);
\draw[thin,dotted] (0.300, 7.794) -- (11.700, 7.794);
\draw[thin] (0.300, 8.660) -- (11.700, 8.660);
\draw[thin,dotted] (0.300, 9.526) -- (11.700, 9.526);
\draw[thin] (0.300, 10.392) -- (11.700, 10.392);
\draw[thin,dotted] (0.300, 11.258) -- (11.700, 11.258);		
		\draw(6,-0.75) node{\footnotesize $(r,c) = (1,0)$};	
	\end{tikzpicture}
\\
	\begin{tikzpicture}[scale = 0.4]
		\draw[color=black, fill=cyan] (6.500, 9.526) -- (3.000, 3.464) -- (10.000, 3.464) -- cycle;
		\draw[pattern={north east lines}] (6.000, 10.392) -- (2.000, 3.464) -- (10.000, 3.464) -- cycle;
		\draw[thin,dotted] (11.173, 0.300) -- (11.700, 1.212);
\draw[thin] (10.173, 0.300) -- (11.700, 2.944);
\draw[thin,dotted] (9.173, 0.300) -- (11.700, 4.677);
\draw[thin] (8.173, 0.300) -- (11.700, 6.409);
\draw[thin,dotted] (7.173, 0.300) -- (11.700, 8.141);
\draw[thin] (6.173, 0.300) -- (11.700, 9.873);
\draw[thin,dotted] (5.173, 0.300) -- (11.700, 11.605);
\draw[thin] (4.173, 0.300) -- (10.755, 11.700);
\draw[thin,dotted] (3.173, 0.300) -- (9.755, 11.700);
\draw[thin] (2.173, 0.300) -- (8.755, 11.700);
\draw[thin,dotted] (1.173, 0.300) -- (7.755, 11.700);
\draw[thin] (0.300, 0.520) -- (6.755, 11.700);
\draw[thin,dotted] (0.300, 2.252) -- (5.755, 11.700);
\draw[thin] (0.300, 3.984) -- (4.755, 11.700);
\draw[thin,dotted] (0.300, 5.716) -- (3.755, 11.700);
\draw[thin] (0.300, 7.448) -- (2.755, 11.700);
\draw[thin,dotted] (0.300, 9.180) -- (1.755, 11.700);
\draw[thin] (0.300, 10.912) -- (0.755, 11.700);
\draw[thin,dotted] (0.300, 1.212) -- (0.827, 0.300);
\draw[thin] (0.300, 2.944) -- (1.827, 0.300);
\draw[thin,dotted] (0.300, 4.677) -- (2.827, 0.300);
\draw[thin] (0.300, 6.409) -- (3.827, 0.300);
\draw[thin,dotted] (0.300, 8.141) -- (4.827, 0.300);
\draw[thin] (0.300, 9.873) -- (5.827, 0.300);
\draw[thin,dotted] (0.300, 11.605) -- (6.827, 0.300);
\draw[thin] (1.245, 11.700) -- (7.827, 0.300);
\draw[thin,dotted] (2.245, 11.700) -- (8.827, 0.300);
\draw[thin] (3.245, 11.700) -- (9.827, 0.300);
\draw[thin,dotted] (4.245, 11.700) -- (10.827, 0.300);
\draw[thin] (5.245, 11.700) -- (11.700, 0.520);
\draw[thin,dotted] (6.245, 11.700) -- (11.700, 2.252);
\draw[thin] (7.245, 11.700) -- (11.700, 3.984);
\draw[thin,dotted] (8.245, 11.700) -- (11.700, 5.716);
\draw[thin] (9.245, 11.700) -- (11.700, 7.448);
\draw[thin,dotted] (10.245, 11.700) -- (11.700, 9.180);
\draw[thin] (11.245, 11.700) -- (11.700, 10.912);
\draw[thin,dotted] (0.300, 0.866) -- (11.700, 0.866);
\draw[thin] (0.300, 1.732) -- (11.700, 1.732);
\draw[thin,dotted] (0.300, 2.598) -- (11.700, 2.598);
\draw[thin] (0.300, 3.464) -- (11.700, 3.464);
\draw[thin,dotted] (0.300, 4.330) -- (11.700, 4.330);
\draw[thin] (0.300, 5.196) -- (11.700, 5.196);
\draw[thin,dotted] (0.300, 6.062) -- (11.700, 6.062);
\draw[thin] (0.300, 6.928) -- (11.700, 6.928);
\draw[thin,dotted] (0.300, 7.794) -- (11.700, 7.794);
\draw[thin] (0.300, 8.660) -- (11.700, 8.660);
\draw[thin,dotted] (0.300, 9.526) -- (11.700, 9.526);
\draw[thin] (0.300, 10.392) -- (11.700, 10.392);
\draw[thin,dotted] (0.300, 11.258) -- (11.700, 11.258);		
		\draw(6,-0.75) node{\footnotesize $(r,c) = (1,2)$};	
	\end{tikzpicture}
&
	\begin{tikzpicture}[scale = 0.4]
		\draw[color=black, fill=cyan] (6.000, 8.660) -- (2.500, 2.598) -- (9.500, 2.598) -- cycle;
		\draw[pattern={north east lines}] (5.000, 8.660) -- (2.000, 3.464) -- (3.000, 1.732) -- (9.000, 1.732) -- (10.000, 3.464) -- (7.000, 8.660) -- cycle;
		\draw[thin,dotted] (11.173, 0.300) -- (11.700, 1.212);
\draw[thin] (10.173, 0.300) -- (11.700, 2.944);
\draw[thin,dotted] (9.173, 0.300) -- (11.700, 4.677);
\draw[thin] (8.173, 0.300) -- (11.700, 6.409);
\draw[thin,dotted] (7.173, 0.300) -- (11.700, 8.141);
\draw[thin] (6.173, 0.300) -- (11.700, 9.873);
\draw[thin,dotted] (5.173, 0.300) -- (11.700, 11.605);
\draw[thin] (4.173, 0.300) -- (10.755, 11.700);
\draw[thin,dotted] (3.173, 0.300) -- (9.755, 11.700);
\draw[thin] (2.173, 0.300) -- (8.755, 11.700);
\draw[thin,dotted] (1.173, 0.300) -- (7.755, 11.700);
\draw[thin] (0.300, 0.520) -- (6.755, 11.700);
\draw[thin,dotted] (0.300, 2.252) -- (5.755, 11.700);
\draw[thin] (0.300, 3.984) -- (4.755, 11.700);
\draw[thin,dotted] (0.300, 5.716) -- (3.755, 11.700);
\draw[thin] (0.300, 7.448) -- (2.755, 11.700);
\draw[thin,dotted] (0.300, 9.180) -- (1.755, 11.700);
\draw[thin] (0.300, 10.912) -- (0.755, 11.700);
\draw[thin,dotted] (0.300, 1.212) -- (0.827, 0.300);
\draw[thin] (0.300, 2.944) -- (1.827, 0.300);
\draw[thin,dotted] (0.300, 4.677) -- (2.827, 0.300);
\draw[thin] (0.300, 6.409) -- (3.827, 0.300);
\draw[thin,dotted] (0.300, 8.141) -- (4.827, 0.300);
\draw[thin] (0.300, 9.873) -- (5.827, 0.300);
\draw[thin,dotted] (0.300, 11.605) -- (6.827, 0.300);
\draw[thin] (1.245, 11.700) -- (7.827, 0.300);
\draw[thin,dotted] (2.245, 11.700) -- (8.827, 0.300);
\draw[thin] (3.245, 11.700) -- (9.827, 0.300);
\draw[thin,dotted] (4.245, 11.700) -- (10.827, 0.300);
\draw[thin] (5.245, 11.700) -- (11.700, 0.520);
\draw[thin,dotted] (6.245, 11.700) -- (11.700, 2.252);
\draw[thin] (7.245, 11.700) -- (11.700, 3.984);
\draw[thin,dotted] (8.245, 11.700) -- (11.700, 5.716);
\draw[thin] (9.245, 11.700) -- (11.700, 7.448);
\draw[thin,dotted] (10.245, 11.700) -- (11.700, 9.180);
\draw[thin] (11.245, 11.700) -- (11.700, 10.912);
\draw[thin,dotted] (0.300, 0.866) -- (11.700, 0.866);
\draw[thin] (0.300, 1.732) -- (11.700, 1.732);
\draw[thin,dotted] (0.300, 2.598) -- (11.700, 2.598);
\draw[thin] (0.300, 3.464) -- (11.700, 3.464);
\draw[thin,dotted] (0.300, 4.330) -- (11.700, 4.330);
\draw[thin] (0.300, 5.196) -- (11.700, 5.196);
\draw[thin,dotted] (0.300, 6.062) -- (11.700, 6.062);
\draw[thin] (0.300, 6.928) -- (11.700, 6.928);
\draw[thin,dotted] (0.300, 7.794) -- (11.700, 7.794);
\draw[thin] (0.300, 8.660) -- (11.700, 8.660);
\draw[thin,dotted] (0.300, 9.526) -- (11.700, 9.526);
\draw[thin] (0.300, 10.392) -- (11.700, 10.392);
\draw[thin,dotted] (0.300, 11.258) -- (11.700, 11.258);
		\draw(6,-0.75) node{\footnotesize $(r,c) = (2,2)$};	
	\end{tikzpicture}
\end{tabular}
\end{center}
\caption{The blue shaded regions represent elements of $P_{r,c}(\mytikz{0.07}{\protect\trinone},T, 2n+1)$, for the different values of $(r,c)$. The illustrations show how these are modified to reach the hatched regions, from which we can deduce the recursive expression in \eqref{eq:recAodd}.}
\label{fig:recAodd}
\end{figure}

\begin{equation}
\label{eq:recBeven}
\begin{split}
B_{2n} 
&= -6 + \sum_{(r,c)\in I} |P_{r,c}(\mytikz{0.08}{\tritop},T, 2n)| \\
&= -6 + A_{n} + C_{n+1} + C_{n+1} + B_{n+1}. 
\end{split}
\end{equation}
See Figure~\ref{fig:recBeven} for a visualisation of the deduction of this recursion.

\begin{figure}[ht]
\begin{center}
\begin{tabular}{cc}
	\begin{tikzpicture}[scale = 0.4]
		\draw[color=black, fill=cyan] (5.500, 9.526) -- (3.000, 5.196) -- (9.000, 5.196) -- (6.500, 9.526) -- cycle;
		\draw[pattern={north east lines}] (6.000, 10.392) -- (3.000, 5.196) -- (9.000, 5.196) -- cycle;
		\draw[thin,dotted] (11.173, 0.300) -- (11.700, 1.212);
\draw[thin] (10.173, 0.300) -- (11.700, 2.944);
\draw[thin,dotted] (9.173, 0.300) -- (11.700, 4.677);
\draw[thin] (8.173, 0.300) -- (11.700, 6.409);
\draw[thin,dotted] (7.173, 0.300) -- (11.700, 8.141);
\draw[thin] (6.173, 0.300) -- (11.700, 9.873);
\draw[thin,dotted] (5.173, 0.300) -- (11.700, 11.605);
\draw[thin] (4.173, 0.300) -- (10.755, 11.700);
\draw[thin,dotted] (3.173, 0.300) -- (9.755, 11.700);
\draw[thin] (2.173, 0.300) -- (8.755, 11.700);
\draw[thin,dotted] (1.173, 0.300) -- (7.755, 11.700);
\draw[thin] (0.300, 0.520) -- (6.755, 11.700);
\draw[thin,dotted] (0.300, 2.252) -- (5.755, 11.700);
\draw[thin] (0.300, 3.984) -- (4.755, 11.700);
\draw[thin,dotted] (0.300, 5.716) -- (3.755, 11.700);
\draw[thin] (0.300, 7.448) -- (2.755, 11.700);
\draw[thin,dotted] (0.300, 9.180) -- (1.755, 11.700);
\draw[thin] (0.300, 10.912) -- (0.755, 11.700);
\draw[thin,dotted] (0.300, 1.212) -- (0.827, 0.300);
\draw[thin] (0.300, 2.944) -- (1.827, 0.300);
\draw[thin,dotted] (0.300, 4.677) -- (2.827, 0.300);
\draw[thin] (0.300, 6.409) -- (3.827, 0.300);
\draw[thin,dotted] (0.300, 8.141) -- (4.827, 0.300);
\draw[thin] (0.300, 9.873) -- (5.827, 0.300);
\draw[thin,dotted] (0.300, 11.605) -- (6.827, 0.300);
\draw[thin] (1.245, 11.700) -- (7.827, 0.300);
\draw[thin,dotted] (2.245, 11.700) -- (8.827, 0.300);
\draw[thin] (3.245, 11.700) -- (9.827, 0.300);
\draw[thin,dotted] (4.245, 11.700) -- (10.827, 0.300);
\draw[thin] (5.245, 11.700) -- (11.700, 0.520);
\draw[thin,dotted] (6.245, 11.700) -- (11.700, 2.252);
\draw[thin] (7.245, 11.700) -- (11.700, 3.984);
\draw[thin,dotted] (8.245, 11.700) -- (11.700, 5.716);
\draw[thin] (9.245, 11.700) -- (11.700, 7.448);
\draw[thin,dotted] (10.245, 11.700) -- (11.700, 9.180);
\draw[thin] (11.245, 11.700) -- (11.700, 10.912);
\draw[thin,dotted] (0.300, 0.866) -- (11.700, 0.866);
\draw[thin] (0.300, 1.732) -- (11.700, 1.732);
\draw[thin,dotted] (0.300, 2.598) -- (11.700, 2.598);
\draw[thin] (0.300, 3.464) -- (11.700, 3.464);
\draw[thin,dotted] (0.300, 4.330) -- (11.700, 4.330);
\draw[thin] (0.300, 5.196) -- (11.700, 5.196);
\draw[thin,dotted] (0.300, 6.062) -- (11.700, 6.062);
\draw[thin] (0.300, 6.928) -- (11.700, 6.928);
\draw[thin,dotted] (0.300, 7.794) -- (11.700, 7.794);
\draw[thin] (0.300, 8.660) -- (11.700, 8.660);
\draw[thin,dotted] (0.300, 9.526) -- (11.700, 9.526);
\draw[thin] (0.300, 10.392) -- (11.700, 10.392);
\draw[thin,dotted] (0.300, 11.258) -- (11.700, 11.258);		
		\draw(6,-0.75) node{\footnotesize $(r,c) = (0,0)$};	
	\end{tikzpicture}
&
	\begin{tikzpicture}[scale = 0.4]	
		\draw[color=black, fill=cyan] (5.000, 8.660) -- (2.500, 4.330) -- (8.500, 4.330) -- (6.000, 8.660) -- cycle;
		\draw[pattern={north east lines}] (2.000, 3.464) -- (8.000, 3.464) -- (9.000, 5.196) -- (7.000, 8.660) -- (5.000, 8.660) -- cycle;
		\draw[thin,dotted] (11.173, 0.300) -- (11.700, 1.212);
\draw[thin] (10.173, 0.300) -- (11.700, 2.944);
\draw[thin,dotted] (9.173, 0.300) -- (11.700, 4.677);
\draw[thin] (8.173, 0.300) -- (11.700, 6.409);
\draw[thin,dotted] (7.173, 0.300) -- (11.700, 8.141);
\draw[thin] (6.173, 0.300) -- (11.700, 9.873);
\draw[thin,dotted] (5.173, 0.300) -- (11.700, 11.605);
\draw[thin] (4.173, 0.300) -- (10.755, 11.700);
\draw[thin,dotted] (3.173, 0.300) -- (9.755, 11.700);
\draw[thin] (2.173, 0.300) -- (8.755, 11.700);
\draw[thin,dotted] (1.173, 0.300) -- (7.755, 11.700);
\draw[thin] (0.300, 0.520) -- (6.755, 11.700);
\draw[thin,dotted] (0.300, 2.252) -- (5.755, 11.700);
\draw[thin] (0.300, 3.984) -- (4.755, 11.700);
\draw[thin,dotted] (0.300, 5.716) -- (3.755, 11.700);
\draw[thin] (0.300, 7.448) -- (2.755, 11.700);
\draw[thin,dotted] (0.300, 9.180) -- (1.755, 11.700);
\draw[thin] (0.300, 10.912) -- (0.755, 11.700);
\draw[thin,dotted] (0.300, 1.212) -- (0.827, 0.300);
\draw[thin] (0.300, 2.944) -- (1.827, 0.300);
\draw[thin,dotted] (0.300, 4.677) -- (2.827, 0.300);
\draw[thin] (0.300, 6.409) -- (3.827, 0.300);
\draw[thin,dotted] (0.300, 8.141) -- (4.827, 0.300);
\draw[thin] (0.300, 9.873) -- (5.827, 0.300);
\draw[thin,dotted] (0.300, 11.605) -- (6.827, 0.300);
\draw[thin] (1.245, 11.700) -- (7.827, 0.300);
\draw[thin,dotted] (2.245, 11.700) -- (8.827, 0.300);
\draw[thin] (3.245, 11.700) -- (9.827, 0.300);
\draw[thin,dotted] (4.245, 11.700) -- (10.827, 0.300);
\draw[thin] (5.245, 11.700) -- (11.700, 0.520);
\draw[thin,dotted] (6.245, 11.700) -- (11.700, 2.252);
\draw[thin] (7.245, 11.700) -- (11.700, 3.984);
\draw[thin,dotted] (8.245, 11.700) -- (11.700, 5.716);
\draw[thin] (9.245, 11.700) -- (11.700, 7.448);
\draw[thin,dotted] (10.245, 11.700) -- (11.700, 9.180);
\draw[thin] (11.245, 11.700) -- (11.700, 10.912);
\draw[thin,dotted] (0.300, 0.866) -- (11.700, 0.866);
\draw[thin] (0.300, 1.732) -- (11.700, 1.732);
\draw[thin,dotted] (0.300, 2.598) -- (11.700, 2.598);
\draw[thin] (0.300, 3.464) -- (11.700, 3.464);
\draw[thin,dotted] (0.300, 4.330) -- (11.700, 4.330);
\draw[thin] (0.300, 5.196) -- (11.700, 5.196);
\draw[thin,dotted] (0.300, 6.062) -- (11.700, 6.062);
\draw[thin] (0.300, 6.928) -- (11.700, 6.928);
\draw[thin,dotted] (0.300, 7.794) -- (11.700, 7.794);
\draw[thin] (0.300, 8.660) -- (11.700, 8.660);
\draw[thin,dotted] (0.300, 9.526) -- (11.700, 9.526);
\draw[thin] (0.300, 10.392) -- (11.700, 10.392);
\draw[thin,dotted] (0.300, 11.258) -- (11.700, 11.258);		
		\draw(6,-0.75) node{\footnotesize $(r,c) = (1,0)$};	
	\end{tikzpicture}
\\
	\begin{tikzpicture}[scale = 0.4]
		\draw[color=black, fill=cyan] (6.000, 8.660) -- (3.500, 4.330) -- (9.500, 4.330) -- (7.000, 8.660) -- cycle;
		\draw[pattern={north east lines}] (4.000, 3.464) -- (10.000, 3.464) -- (7.000, 8.660) -- (5.000, 8.660) -- (3.000, 5.196) -- cycle;
		\draw[thin,dotted] (11.173, 0.300) -- (11.700, 1.212);
\draw[thin] (10.173, 0.300) -- (11.700, 2.944);
\draw[thin,dotted] (9.173, 0.300) -- (11.700, 4.677);
\draw[thin] (8.173, 0.300) -- (11.700, 6.409);
\draw[thin,dotted] (7.173, 0.300) -- (11.700, 8.141);
\draw[thin] (6.173, 0.300) -- (11.700, 9.873);
\draw[thin,dotted] (5.173, 0.300) -- (11.700, 11.605);
\draw[thin] (4.173, 0.300) -- (10.755, 11.700);
\draw[thin,dotted] (3.173, 0.300) -- (9.755, 11.700);
\draw[thin] (2.173, 0.300) -- (8.755, 11.700);
\draw[thin,dotted] (1.173, 0.300) -- (7.755, 11.700);
\draw[thin] (0.300, 0.520) -- (6.755, 11.700);
\draw[thin,dotted] (0.300, 2.252) -- (5.755, 11.700);
\draw[thin] (0.300, 3.984) -- (4.755, 11.700);
\draw[thin,dotted] (0.300, 5.716) -- (3.755, 11.700);
\draw[thin] (0.300, 7.448) -- (2.755, 11.700);
\draw[thin,dotted] (0.300, 9.180) -- (1.755, 11.700);
\draw[thin] (0.300, 10.912) -- (0.755, 11.700);
\draw[thin,dotted] (0.300, 1.212) -- (0.827, 0.300);
\draw[thin] (0.300, 2.944) -- (1.827, 0.300);
\draw[thin,dotted] (0.300, 4.677) -- (2.827, 0.300);
\draw[thin] (0.300, 6.409) -- (3.827, 0.300);
\draw[thin,dotted] (0.300, 8.141) -- (4.827, 0.300);
\draw[thin] (0.300, 9.873) -- (5.827, 0.300);
\draw[thin,dotted] (0.300, 11.605) -- (6.827, 0.300);
\draw[thin] (1.245, 11.700) -- (7.827, 0.300);
\draw[thin,dotted] (2.245, 11.700) -- (8.827, 0.300);
\draw[thin] (3.245, 11.700) -- (9.827, 0.300);
\draw[thin,dotted] (4.245, 11.700) -- (10.827, 0.300);
\draw[thin] (5.245, 11.700) -- (11.700, 0.520);
\draw[thin,dotted] (6.245, 11.700) -- (11.700, 2.252);
\draw[thin] (7.245, 11.700) -- (11.700, 3.984);
\draw[thin,dotted] (8.245, 11.700) -- (11.700, 5.716);
\draw[thin] (9.245, 11.700) -- (11.700, 7.448);
\draw[thin,dotted] (10.245, 11.700) -- (11.700, 9.180);
\draw[thin] (11.245, 11.700) -- (11.700, 10.912);
\draw[thin,dotted] (0.300, 0.866) -- (11.700, 0.866);
\draw[thin] (0.300, 1.732) -- (11.700, 1.732);
\draw[thin,dotted] (0.300, 2.598) -- (11.700, 2.598);
\draw[thin] (0.300, 3.464) -- (11.700, 3.464);
\draw[thin,dotted] (0.300, 4.330) -- (11.700, 4.330);
\draw[thin] (0.300, 5.196) -- (11.700, 5.196);
\draw[thin,dotted] (0.300, 6.062) -- (11.700, 6.062);
\draw[thin] (0.300, 6.928) -- (11.700, 6.928);
\draw[thin,dotted] (0.300, 7.794) -- (11.700, 7.794);
\draw[thin] (0.300, 8.660) -- (11.700, 8.660);
\draw[thin,dotted] (0.300, 9.526) -- (11.700, 9.526);
\draw[thin] (0.300, 10.392) -- (11.700, 10.392);
\draw[thin,dotted] (0.300, 11.258) -- (11.700, 11.258);		
		\draw(6,-0.75) node{\footnotesize $(r,c) = (1,2)$};	
	\end{tikzpicture}
&
	\begin{tikzpicture}[scale = 0.4]
		\draw[color=black, fill=cyan] (5.500, 7.794) -- (3.000, 3.464) -- (9.000, 3.464) -- (6.500, 7.794) -- cycle;
		\draw[pattern={north east lines}] (5.000, 8.660) -- (2.000, 3.464) -- (10.000, 3.464) -- (7.000, 8.660) -- cycle;
		\draw[thin,dotted] (11.173, 0.300) -- (11.700, 1.212);
\draw[thin] (10.173, 0.300) -- (11.700, 2.944);
\draw[thin,dotted] (9.173, 0.300) -- (11.700, 4.677);
\draw[thin] (8.173, 0.300) -- (11.700, 6.409);
\draw[thin,dotted] (7.173, 0.300) -- (11.700, 8.141);
\draw[thin] (6.173, 0.300) -- (11.700, 9.873);
\draw[thin,dotted] (5.173, 0.300) -- (11.700, 11.605);
\draw[thin] (4.173, 0.300) -- (10.755, 11.700);
\draw[thin,dotted] (3.173, 0.300) -- (9.755, 11.700);
\draw[thin] (2.173, 0.300) -- (8.755, 11.700);
\draw[thin,dotted] (1.173, 0.300) -- (7.755, 11.700);
\draw[thin] (0.300, 0.520) -- (6.755, 11.700);
\draw[thin,dotted] (0.300, 2.252) -- (5.755, 11.700);
\draw[thin] (0.300, 3.984) -- (4.755, 11.700);
\draw[thin,dotted] (0.300, 5.716) -- (3.755, 11.700);
\draw[thin] (0.300, 7.448) -- (2.755, 11.700);
\draw[thin,dotted] (0.300, 9.180) -- (1.755, 11.700);
\draw[thin] (0.300, 10.912) -- (0.755, 11.700);
\draw[thin,dotted] (0.300, 1.212) -- (0.827, 0.300);
\draw[thin] (0.300, 2.944) -- (1.827, 0.300);
\draw[thin,dotted] (0.300, 4.677) -- (2.827, 0.300);
\draw[thin] (0.300, 6.409) -- (3.827, 0.300);
\draw[thin,dotted] (0.300, 8.141) -- (4.827, 0.300);
\draw[thin] (0.300, 9.873) -- (5.827, 0.300);
\draw[thin,dotted] (0.300, 11.605) -- (6.827, 0.300);
\draw[thin] (1.245, 11.700) -- (7.827, 0.300);
\draw[thin,dotted] (2.245, 11.700) -- (8.827, 0.300);
\draw[thin] (3.245, 11.700) -- (9.827, 0.300);
\draw[thin,dotted] (4.245, 11.700) -- (10.827, 0.300);
\draw[thin] (5.245, 11.700) -- (11.700, 0.520);
\draw[thin,dotted] (6.245, 11.700) -- (11.700, 2.252);
\draw[thin] (7.245, 11.700) -- (11.700, 3.984);
\draw[thin,dotted] (8.245, 11.700) -- (11.700, 5.716);
\draw[thin] (9.245, 11.700) -- (11.700, 7.448);
\draw[thin,dotted] (10.245, 11.700) -- (11.700, 9.180);
\draw[thin] (11.245, 11.700) -- (11.700, 10.912);
\draw[thin,dotted] (0.300, 0.866) -- (11.700, 0.866);
\draw[thin] (0.300, 1.732) -- (11.700, 1.732);
\draw[thin,dotted] (0.300, 2.598) -- (11.700, 2.598);
\draw[thin] (0.300, 3.464) -- (11.700, 3.464);
\draw[thin,dotted] (0.300, 4.330) -- (11.700, 4.330);
\draw[thin] (0.300, 5.196) -- (11.700, 5.196);
\draw[thin,dotted] (0.300, 6.062) -- (11.700, 6.062);
\draw[thin] (0.300, 6.928) -- (11.700, 6.928);
\draw[thin,dotted] (0.300, 7.794) -- (11.700, 7.794);
\draw[thin] (0.300, 8.660) -- (11.700, 8.660);
\draw[thin,dotted] (0.300, 9.526) -- (11.700, 9.526);
\draw[thin] (0.300, 10.392) -- (11.700, 10.392);
\draw[thin,dotted] (0.300, 11.258) -- (11.700, 11.258);
		\draw(6,-0.75) node{\footnotesize $(r,c) = (2,2)$};	
		\end{tikzpicture}
	\end{tabular}
\end{center}
\caption{The blue shaded regions represent elements of $P_{r,c}(\mytikz{0.07}{\protect\tritop},T, 2n)$, for the different values of $(r,c)$. The illustrations show how these are modified to reach the hatched regions, from which we can deduce the recursive expression in \eqref{eq:recBeven}.}
\label{fig:recBeven}
\end{figure}

\begin{equation}
\label{eq:recBodd}
\begin{split}
B_{2n+1} 
&= -6 + \sum_{(r,c)\in I} |P_{r,c}(\mytikz{0.08}{\tritop},T, 2n+1)| \\
&= -6 + A_{n+1} + B_{n+1} + B_{n+1} + D_{n+2}.
\end{split}
\end{equation}
See Figure~\ref{fig:recBodd} for a visualisation of the deduction of this recursion.

\begin{figure}[ht]
\begin{center}
\begin{tabular}{cc}
	\begin{tikzpicture}[scale = 0.4]
		\draw[color=black, fill=cyan] (5.500, 9.526) -- (2.500, 4.330) -- (9.500, 4.330) -- (6.500, 9.526) -- cycle;
		\draw[pattern={north east lines}] (6.000, 10.392) -- (2.000, 3.464) -- (10.000, 3.464) -- cycle;
		\draw[thin,dotted] (11.173, 0.300) -- (11.700, 1.212);
\draw[thin] (10.173, 0.300) -- (11.700, 2.944);
\draw[thin,dotted] (9.173, 0.300) -- (11.700, 4.677);
\draw[thin] (8.173, 0.300) -- (11.700, 6.409);
\draw[thin,dotted] (7.173, 0.300) -- (11.700, 8.141);
\draw[thin] (6.173, 0.300) -- (11.700, 9.873);
\draw[thin,dotted] (5.173, 0.300) -- (11.700, 11.605);
\draw[thin] (4.173, 0.300) -- (10.755, 11.700);
\draw[thin,dotted] (3.173, 0.300) -- (9.755, 11.700);
\draw[thin] (2.173, 0.300) -- (8.755, 11.700);
\draw[thin,dotted] (1.173, 0.300) -- (7.755, 11.700);
\draw[thin] (0.300, 0.520) -- (6.755, 11.700);
\draw[thin,dotted] (0.300, 2.252) -- (5.755, 11.700);
\draw[thin] (0.300, 3.984) -- (4.755, 11.700);
\draw[thin,dotted] (0.300, 5.716) -- (3.755, 11.700);
\draw[thin] (0.300, 7.448) -- (2.755, 11.700);
\draw[thin,dotted] (0.300, 9.180) -- (1.755, 11.700);
\draw[thin] (0.300, 10.912) -- (0.755, 11.700);
\draw[thin,dotted] (0.300, 1.212) -- (0.827, 0.300);
\draw[thin] (0.300, 2.944) -- (1.827, 0.300);
\draw[thin,dotted] (0.300, 4.677) -- (2.827, 0.300);
\draw[thin] (0.300, 6.409) -- (3.827, 0.300);
\draw[thin,dotted] (0.300, 8.141) -- (4.827, 0.300);
\draw[thin] (0.300, 9.873) -- (5.827, 0.300);
\draw[thin,dotted] (0.300, 11.605) -- (6.827, 0.300);
\draw[thin] (1.245, 11.700) -- (7.827, 0.300);
\draw[thin,dotted] (2.245, 11.700) -- (8.827, 0.300);
\draw[thin] (3.245, 11.700) -- (9.827, 0.300);
\draw[thin,dotted] (4.245, 11.700) -- (10.827, 0.300);
\draw[thin] (5.245, 11.700) -- (11.700, 0.520);
\draw[thin,dotted] (6.245, 11.700) -- (11.700, 2.252);
\draw[thin] (7.245, 11.700) -- (11.700, 3.984);
\draw[thin,dotted] (8.245, 11.700) -- (11.700, 5.716);
\draw[thin] (9.245, 11.700) -- (11.700, 7.448);
\draw[thin,dotted] (10.245, 11.700) -- (11.700, 9.180);
\draw[thin] (11.245, 11.700) -- (11.700, 10.912);
\draw[thin,dotted] (0.300, 0.866) -- (11.700, 0.866);
\draw[thin] (0.300, 1.732) -- (11.700, 1.732);
\draw[thin,dotted] (0.300, 2.598) -- (11.700, 2.598);
\draw[thin] (0.300, 3.464) -- (11.700, 3.464);
\draw[thin,dotted] (0.300, 4.330) -- (11.700, 4.330);
\draw[thin] (0.300, 5.196) -- (11.700, 5.196);
\draw[thin,dotted] (0.300, 6.062) -- (11.700, 6.062);
\draw[thin] (0.300, 6.928) -- (11.700, 6.928);
\draw[thin,dotted] (0.300, 7.794) -- (11.700, 7.794);
\draw[thin] (0.300, 8.660) -- (11.700, 8.660);
\draw[thin,dotted] (0.300, 9.526) -- (11.700, 9.526);
\draw[thin] (0.300, 10.392) -- (11.700, 10.392);
\draw[thin,dotted] (0.300, 11.258) -- (11.700, 11.258);		
		\draw(6,-0.75) node{\footnotesize $(r,c) = (0,0)$};	
	\end{tikzpicture}
&
	\begin{tikzpicture}[scale = 0.4]
		\draw[color=black, fill=cyan] (5.000, 8.660) -- (2.000, 3.464) -- (9.000, 3.464) -- (6.000, 8.660) -- cycle;
		\draw[pattern={north east lines}] (5.000, 8.660) -- (2.000, 3.464) -- (10.000, 3.464) -- (7.000, 8.660) -- cycle;
		\draw[thin,dotted] (11.173, 0.300) -- (11.700, 1.212);
\draw[thin] (10.173, 0.300) -- (11.700, 2.944);
\draw[thin,dotted] (9.173, 0.300) -- (11.700, 4.677);
\draw[thin] (8.173, 0.300) -- (11.700, 6.409);
\draw[thin,dotted] (7.173, 0.300) -- (11.700, 8.141);
\draw[thin] (6.173, 0.300) -- (11.700, 9.873);
\draw[thin,dotted] (5.173, 0.300) -- (11.700, 11.605);
\draw[thin] (4.173, 0.300) -- (10.755, 11.700);
\draw[thin,dotted] (3.173, 0.300) -- (9.755, 11.700);
\draw[thin] (2.173, 0.300) -- (8.755, 11.700);
\draw[thin,dotted] (1.173, 0.300) -- (7.755, 11.700);
\draw[thin] (0.300, 0.520) -- (6.755, 11.700);
\draw[thin,dotted] (0.300, 2.252) -- (5.755, 11.700);
\draw[thin] (0.300, 3.984) -- (4.755, 11.700);
\draw[thin,dotted] (0.300, 5.716) -- (3.755, 11.700);
\draw[thin] (0.300, 7.448) -- (2.755, 11.700);
\draw[thin,dotted] (0.300, 9.180) -- (1.755, 11.700);
\draw[thin] (0.300, 10.912) -- (0.755, 11.700);
\draw[thin,dotted] (0.300, 1.212) -- (0.827, 0.300);
\draw[thin] (0.300, 2.944) -- (1.827, 0.300);
\draw[thin,dotted] (0.300, 4.677) -- (2.827, 0.300);
\draw[thin] (0.300, 6.409) -- (3.827, 0.300);
\draw[thin,dotted] (0.300, 8.141) -- (4.827, 0.300);
\draw[thin] (0.300, 9.873) -- (5.827, 0.300);
\draw[thin,dotted] (0.300, 11.605) -- (6.827, 0.300);
\draw[thin] (1.245, 11.700) -- (7.827, 0.300);
\draw[thin,dotted] (2.245, 11.700) -- (8.827, 0.300);
\draw[thin] (3.245, 11.700) -- (9.827, 0.300);
\draw[thin,dotted] (4.245, 11.700) -- (10.827, 0.300);
\draw[thin] (5.245, 11.700) -- (11.700, 0.520);
\draw[thin,dotted] (6.245, 11.700) -- (11.700, 2.252);
\draw[thin] (7.245, 11.700) -- (11.700, 3.984);
\draw[thin,dotted] (8.245, 11.700) -- (11.700, 5.716);
\draw[thin] (9.245, 11.700) -- (11.700, 7.448);
\draw[thin,dotted] (10.245, 11.700) -- (11.700, 9.180);
\draw[thin] (11.245, 11.700) -- (11.700, 10.912);
\draw[thin,dotted] (0.300, 0.866) -- (11.700, 0.866);
\draw[thin] (0.300, 1.732) -- (11.700, 1.732);
\draw[thin,dotted] (0.300, 2.598) -- (11.700, 2.598);
\draw[thin] (0.300, 3.464) -- (11.700, 3.464);
\draw[thin,dotted] (0.300, 4.330) -- (11.700, 4.330);
\draw[thin] (0.300, 5.196) -- (11.700, 5.196);
\draw[thin,dotted] (0.300, 6.062) -- (11.700, 6.062);
\draw[thin] (0.300, 6.928) -- (11.700, 6.928);
\draw[thin,dotted] (0.300, 7.794) -- (11.700, 7.794);
\draw[thin] (0.300, 8.660) -- (11.700, 8.660);
\draw[thin,dotted] (0.300, 9.526) -- (11.700, 9.526);
\draw[thin] (0.300, 10.392) -- (11.700, 10.392);
\draw[thin,dotted] (0.300, 11.258) -- (11.700, 11.258);		
		\draw(6,-0.75) node{\footnotesize $(r,c) = (1,0)$};	
	\end{tikzpicture}
\\
	\begin{tikzpicture}[scale = 0.4]
		\draw[color=black, fill=cyan] (6.000, 8.660) -- (3.000, 3.464) -- (10.000, 3.464) -- (7.000, 8.660) -- cycle;
		\draw[pattern={north east lines}] (5.000, 8.660) -- (2.000, 3.464) -- (10.000, 3.464) -- (7.000, 8.660) -- cycle;
		\draw[thin,dotted] (11.173, 0.300) -- (11.700, 1.212);
\draw[thin] (10.173, 0.300) -- (11.700, 2.944);
\draw[thin,dotted] (9.173, 0.300) -- (11.700, 4.677);
\draw[thin] (8.173, 0.300) -- (11.700, 6.409);
\draw[thin,dotted] (7.173, 0.300) -- (11.700, 8.141);
\draw[thin] (6.173, 0.300) -- (11.700, 9.873);
\draw[thin,dotted] (5.173, 0.300) -- (11.700, 11.605);
\draw[thin] (4.173, 0.300) -- (10.755, 11.700);
\draw[thin,dotted] (3.173, 0.300) -- (9.755, 11.700);
\draw[thin] (2.173, 0.300) -- (8.755, 11.700);
\draw[thin,dotted] (1.173, 0.300) -- (7.755, 11.700);
\draw[thin] (0.300, 0.520) -- (6.755, 11.700);
\draw[thin,dotted] (0.300, 2.252) -- (5.755, 11.700);
\draw[thin] (0.300, 3.984) -- (4.755, 11.700);
\draw[thin,dotted] (0.300, 5.716) -- (3.755, 11.700);
\draw[thin] (0.300, 7.448) -- (2.755, 11.700);
\draw[thin,dotted] (0.300, 9.180) -- (1.755, 11.700);
\draw[thin] (0.300, 10.912) -- (0.755, 11.700);
\draw[thin,dotted] (0.300, 1.212) -- (0.827, 0.300);
\draw[thin] (0.300, 2.944) -- (1.827, 0.300);
\draw[thin,dotted] (0.300, 4.677) -- (2.827, 0.300);
\draw[thin] (0.300, 6.409) -- (3.827, 0.300);
\draw[thin,dotted] (0.300, 8.141) -- (4.827, 0.300);
\draw[thin] (0.300, 9.873) -- (5.827, 0.300);
\draw[thin,dotted] (0.300, 11.605) -- (6.827, 0.300);
\draw[thin] (1.245, 11.700) -- (7.827, 0.300);
\draw[thin,dotted] (2.245, 11.700) -- (8.827, 0.300);
\draw[thin] (3.245, 11.700) -- (9.827, 0.300);
\draw[thin,dotted] (4.245, 11.700) -- (10.827, 0.300);
\draw[thin] (5.245, 11.700) -- (11.700, 0.520);
\draw[thin,dotted] (6.245, 11.700) -- (11.700, 2.252);
\draw[thin] (7.245, 11.700) -- (11.700, 3.984);
\draw[thin,dotted] (8.245, 11.700) -- (11.700, 5.716);
\draw[thin] (9.245, 11.700) -- (11.700, 7.448);
\draw[thin,dotted] (10.245, 11.700) -- (11.700, 9.180);
\draw[thin] (11.245, 11.700) -- (11.700, 10.912);
\draw[thin,dotted] (0.300, 0.866) -- (11.700, 0.866);
\draw[thin] (0.300, 1.732) -- (11.700, 1.732);
\draw[thin,dotted] (0.300, 2.598) -- (11.700, 2.598);
\draw[thin] (0.300, 3.464) -- (11.700, 3.464);
\draw[thin,dotted] (0.300, 4.330) -- (11.700, 4.330);
\draw[thin] (0.300, 5.196) -- (11.700, 5.196);
\draw[thin,dotted] (0.300, 6.062) -- (11.700, 6.062);
\draw[thin] (0.300, 6.928) -- (11.700, 6.928);
\draw[thin,dotted] (0.300, 7.794) -- (11.700, 7.794);
\draw[thin] (0.300, 8.660) -- (11.700, 8.660);
\draw[thin,dotted] (0.300, 9.526) -- (11.700, 9.526);
\draw[thin] (0.300, 10.392) -- (11.700, 10.392);
\draw[thin,dotted] (0.300, 11.258) -- (11.700, 11.258);		
		\draw(6,-0.75) node{\footnotesize $(r,c) = (1,2)$};	
	\end{tikzpicture}
&
	\begin{tikzpicture}[scale = 0.4]
		\draw[color=black, fill=cyan] (5.500, 7.794) -- (2.500, 2.598) -- (9.500, 2.598) -- (6.500, 7.794) -- cycle;
		\draw[pattern={north east lines}] (5.000, 8.660) -- (2.000, 3.464) -- (3.000, 1.732) -- (9.000, 1.732) -- (10.000, 3.464) -- (7.000, 8.660) -- cycle;
		\draw[thin,dotted] (11.173, 0.300) -- (11.700, 1.212);
\draw[thin] (10.173, 0.300) -- (11.700, 2.944);
\draw[thin,dotted] (9.173, 0.300) -- (11.700, 4.677);
\draw[thin] (8.173, 0.300) -- (11.700, 6.409);
\draw[thin,dotted] (7.173, 0.300) -- (11.700, 8.141);
\draw[thin] (6.173, 0.300) -- (11.700, 9.873);
\draw[thin,dotted] (5.173, 0.300) -- (11.700, 11.605);
\draw[thin] (4.173, 0.300) -- (10.755, 11.700);
\draw[thin,dotted] (3.173, 0.300) -- (9.755, 11.700);
\draw[thin] (2.173, 0.300) -- (8.755, 11.700);
\draw[thin,dotted] (1.173, 0.300) -- (7.755, 11.700);
\draw[thin] (0.300, 0.520) -- (6.755, 11.700);
\draw[thin,dotted] (0.300, 2.252) -- (5.755, 11.700);
\draw[thin] (0.300, 3.984) -- (4.755, 11.700);
\draw[thin,dotted] (0.300, 5.716) -- (3.755, 11.700);
\draw[thin] (0.300, 7.448) -- (2.755, 11.700);
\draw[thin,dotted] (0.300, 9.180) -- (1.755, 11.700);
\draw[thin] (0.300, 10.912) -- (0.755, 11.700);
\draw[thin,dotted] (0.300, 1.212) -- (0.827, 0.300);
\draw[thin] (0.300, 2.944) -- (1.827, 0.300);
\draw[thin,dotted] (0.300, 4.677) -- (2.827, 0.300);
\draw[thin] (0.300, 6.409) -- (3.827, 0.300);
\draw[thin,dotted] (0.300, 8.141) -- (4.827, 0.300);
\draw[thin] (0.300, 9.873) -- (5.827, 0.300);
\draw[thin,dotted] (0.300, 11.605) -- (6.827, 0.300);
\draw[thin] (1.245, 11.700) -- (7.827, 0.300);
\draw[thin,dotted] (2.245, 11.700) -- (8.827, 0.300);
\draw[thin] (3.245, 11.700) -- (9.827, 0.300);
\draw[thin,dotted] (4.245, 11.700) -- (10.827, 0.300);
\draw[thin] (5.245, 11.700) -- (11.700, 0.520);
\draw[thin,dotted] (6.245, 11.700) -- (11.700, 2.252);
\draw[thin] (7.245, 11.700) -- (11.700, 3.984);
\draw[thin,dotted] (8.245, 11.700) -- (11.700, 5.716);
\draw[thin] (9.245, 11.700) -- (11.700, 7.448);
\draw[thin,dotted] (10.245, 11.700) -- (11.700, 9.180);
\draw[thin] (11.245, 11.700) -- (11.700, 10.912);
\draw[thin,dotted] (0.300, 0.866) -- (11.700, 0.866);
\draw[thin] (0.300, 1.732) -- (11.700, 1.732);
\draw[thin,dotted] (0.300, 2.598) -- (11.700, 2.598);
\draw[thin] (0.300, 3.464) -- (11.700, 3.464);
\draw[thin,dotted] (0.300, 4.330) -- (11.700, 4.330);
\draw[thin] (0.300, 5.196) -- (11.700, 5.196);
\draw[thin,dotted] (0.300, 6.062) -- (11.700, 6.062);
\draw[thin] (0.300, 6.928) -- (11.700, 6.928);
\draw[thin,dotted] (0.300, 7.794) -- (11.700, 7.794);
\draw[thin] (0.300, 8.660) -- (11.700, 8.660);
\draw[thin,dotted] (0.300, 9.526) -- (11.700, 9.526);
\draw[thin] (0.300, 10.392) -- (11.700, 10.392);
\draw[thin,dotted] (0.300, 11.258) -- (11.700, 11.258);
		\draw(6,-0.75) node{\footnotesize $(r,c) = (2,2)$};	
	\end{tikzpicture}
\end{tabular}
\end{center}
\caption{The blue shaded shaded regions represent elements of $P_{r,c}(\mytikz{0.07}{\protect\tritop},T, 2n+1)$, for the different values of $(r,c)$. The illustrations show how these are modified to reach the hatched regions, from which we can deduce the recursive expression in \eqref{eq:recBodd}.}
\label{fig:recBodd}
\end{figure}

\begin{equation}
\label{eq:recCeven}
\begin{split}
C_{2n} 
&= -6 + \sum_{(r,c)\in I} |P_{r,c}(\mytikz{0.08}{\trilower},T, 2n)| \\
&= -6 + A_{n} + C_{n+1} + C_{n+1} + D_{n+1}.
\end{split}
\end{equation}
See Figure~\ref{fig:recCeven} for a visualisation of the deduction of this recursion. 

\begin{figure}[ht]
\begin{center}
\begin{tabular}{cc}
	\begin{tikzpicture}[scale = 0.4]
		\draw[color=black, fill=cyan] (6.000, 10.392) -- (3.500, 6.062) -- (4.000, 5.196) -- (8.000, 5.196) -- (8.500, 6.062) -- cycle;
		\draw[pattern={north east lines}] (6.000, 10.392) -- (3.000, 5.196) -- (9.000, 5.196) -- cycle;
		\draw[thin,dotted] (11.173, 0.300) -- (11.700, 1.212);
\draw[thin] (10.173, 0.300) -- (11.700, 2.944);
\draw[thin,dotted] (9.173, 0.300) -- (11.700, 4.677);
\draw[thin] (8.173, 0.300) -- (11.700, 6.409);
\draw[thin,dotted] (7.173, 0.300) -- (11.700, 8.141);
\draw[thin] (6.173, 0.300) -- (11.700, 9.873);
\draw[thin,dotted] (5.173, 0.300) -- (11.700, 11.605);
\draw[thin] (4.173, 0.300) -- (10.755, 11.700);
\draw[thin,dotted] (3.173, 0.300) -- (9.755, 11.700);
\draw[thin] (2.173, 0.300) -- (8.755, 11.700);
\draw[thin,dotted] (1.173, 0.300) -- (7.755, 11.700);
\draw[thin] (0.300, 0.520) -- (6.755, 11.700);
\draw[thin,dotted] (0.300, 2.252) -- (5.755, 11.700);
\draw[thin] (0.300, 3.984) -- (4.755, 11.700);
\draw[thin,dotted] (0.300, 5.716) -- (3.755, 11.700);
\draw[thin] (0.300, 7.448) -- (2.755, 11.700);
\draw[thin,dotted] (0.300, 9.180) -- (1.755, 11.700);
\draw[thin] (0.300, 10.912) -- (0.755, 11.700);
\draw[thin,dotted] (0.300, 1.212) -- (0.827, 0.300);
\draw[thin] (0.300, 2.944) -- (1.827, 0.300);
\draw[thin,dotted] (0.300, 4.677) -- (2.827, 0.300);
\draw[thin] (0.300, 6.409) -- (3.827, 0.300);
\draw[thin,dotted] (0.300, 8.141) -- (4.827, 0.300);
\draw[thin] (0.300, 9.873) -- (5.827, 0.300);
\draw[thin,dotted] (0.300, 11.605) -- (6.827, 0.300);
\draw[thin] (1.245, 11.700) -- (7.827, 0.300);
\draw[thin,dotted] (2.245, 11.700) -- (8.827, 0.300);
\draw[thin] (3.245, 11.700) -- (9.827, 0.300);
\draw[thin,dotted] (4.245, 11.700) -- (10.827, 0.300);
\draw[thin] (5.245, 11.700) -- (11.700, 0.520);
\draw[thin,dotted] (6.245, 11.700) -- (11.700, 2.252);
\draw[thin] (7.245, 11.700) -- (11.700, 3.984);
\draw[thin,dotted] (8.245, 11.700) -- (11.700, 5.716);
\draw[thin] (9.245, 11.700) -- (11.700, 7.448);
\draw[thin,dotted] (10.245, 11.700) -- (11.700, 9.180);
\draw[thin] (11.245, 11.700) -- (11.700, 10.912);
\draw[thin,dotted] (0.300, 0.866) -- (11.700, 0.866);
\draw[thin] (0.300, 1.732) -- (11.700, 1.732);
\draw[thin,dotted] (0.300, 2.598) -- (11.700, 2.598);
\draw[thin] (0.300, 3.464) -- (11.700, 3.464);
\draw[thin,dotted] (0.300, 4.330) -- (11.700, 4.330);
\draw[thin] (0.300, 5.196) -- (11.700, 5.196);
\draw[thin,dotted] (0.300, 6.062) -- (11.700, 6.062);
\draw[thin] (0.300, 6.928) -- (11.700, 6.928);
\draw[thin,dotted] (0.300, 7.794) -- (11.700, 7.794);
\draw[thin] (0.300, 8.660) -- (11.700, 8.660);
\draw[thin,dotted] (0.300, 9.526) -- (11.700, 9.526);
\draw[thin] (0.300, 10.392) -- (11.700, 10.392);
\draw[thin,dotted] (0.300, 11.258) -- (11.700, 11.258);		
		\draw(6,-0.75) node{\footnotesize $(r,c) = (0,0)$};	
	\end{tikzpicture}
&
	\begin{tikzpicture}[scale = 0.4]
		\draw[color=black, fill=cyan] (5.500, 9.526) -- (3.000, 5.196) -- (3.500, 4.330) -- (7.500, 4.330) -- (8.000, 5.196) -- cycle;
		\draw[pattern={north east lines}] (6.000, 10.392) -- (3.000, 5.196) -- (4.000, 3.464) -- (8.000, 3.464) -- (9.000, 5.196) -- cycle;
		\draw[thin,dotted] (11.173, 0.300) -- (11.700, 1.212);
\draw[thin] (10.173, 0.300) -- (11.700, 2.944);
\draw[thin,dotted] (9.173, 0.300) -- (11.700, 4.677);
\draw[thin] (8.173, 0.300) -- (11.700, 6.409);
\draw[thin,dotted] (7.173, 0.300) -- (11.700, 8.141);
\draw[thin] (6.173, 0.300) -- (11.700, 9.873);
\draw[thin,dotted] (5.173, 0.300) -- (11.700, 11.605);
\draw[thin] (4.173, 0.300) -- (10.755, 11.700);
\draw[thin,dotted] (3.173, 0.300) -- (9.755, 11.700);
\draw[thin] (2.173, 0.300) -- (8.755, 11.700);
\draw[thin,dotted] (1.173, 0.300) -- (7.755, 11.700);
\draw[thin] (0.300, 0.520) -- (6.755, 11.700);
\draw[thin,dotted] (0.300, 2.252) -- (5.755, 11.700);
\draw[thin] (0.300, 3.984) -- (4.755, 11.700);
\draw[thin,dotted] (0.300, 5.716) -- (3.755, 11.700);
\draw[thin] (0.300, 7.448) -- (2.755, 11.700);
\draw[thin,dotted] (0.300, 9.180) -- (1.755, 11.700);
\draw[thin] (0.300, 10.912) -- (0.755, 11.700);
\draw[thin,dotted] (0.300, 1.212) -- (0.827, 0.300);
\draw[thin] (0.300, 2.944) -- (1.827, 0.300);
\draw[thin,dotted] (0.300, 4.677) -- (2.827, 0.300);
\draw[thin] (0.300, 6.409) -- (3.827, 0.300);
\draw[thin,dotted] (0.300, 8.141) -- (4.827, 0.300);
\draw[thin] (0.300, 9.873) -- (5.827, 0.300);
\draw[thin,dotted] (0.300, 11.605) -- (6.827, 0.300);
\draw[thin] (1.245, 11.700) -- (7.827, 0.300);
\draw[thin,dotted] (2.245, 11.700) -- (8.827, 0.300);
\draw[thin] (3.245, 11.700) -- (9.827, 0.300);
\draw[thin,dotted] (4.245, 11.700) -- (10.827, 0.300);
\draw[thin] (5.245, 11.700) -- (11.700, 0.520);
\draw[thin,dotted] (6.245, 11.700) -- (11.700, 2.252);
\draw[thin] (7.245, 11.700) -- (11.700, 3.984);
\draw[thin,dotted] (8.245, 11.700) -- (11.700, 5.716);
\draw[thin] (9.245, 11.700) -- (11.700, 7.448);
\draw[thin,dotted] (10.245, 11.700) -- (11.700, 9.180);
\draw[thin] (11.245, 11.700) -- (11.700, 10.912);
\draw[thin,dotted] (0.300, 0.866) -- (11.700, 0.866);
\draw[thin] (0.300, 1.732) -- (11.700, 1.732);
\draw[thin,dotted] (0.300, 2.598) -- (11.700, 2.598);
\draw[thin] (0.300, 3.464) -- (11.700, 3.464);
\draw[thin,dotted] (0.300, 4.330) -- (11.700, 4.330);
\draw[thin] (0.300, 5.196) -- (11.700, 5.196);
\draw[thin,dotted] (0.300, 6.062) -- (11.700, 6.062);
\draw[thin] (0.300, 6.928) -- (11.700, 6.928);
\draw[thin,dotted] (0.300, 7.794) -- (11.700, 7.794);
\draw[thin] (0.300, 8.660) -- (11.700, 8.660);
\draw[thin,dotted] (0.300, 9.526) -- (11.700, 9.526);
\draw[thin] (0.300, 10.392) -- (11.700, 10.392);
\draw[thin,dotted] (0.300, 11.258) -- (11.700, 11.258);		
		\draw(6,-0.75) node{\footnotesize $(r,c) = (1,0)$};	
	\end{tikzpicture}
\\
	\begin{tikzpicture}[scale = 0.4]
		\draw[color=black, fill=cyan] (6.500, 9.526) -- (4.000, 5.196) -- (4.500, 4.330) -- (8.500, 4.330) -- (9.000, 5.196) -- cycle;
		\draw[pattern={north east lines}] (6.000, 10.392) -- (3.000, 5.196) -- (4.000, 3.464) -- (8.000, 3.464) -- (9.000, 5.196) -- cycle;
		\draw[thin,dotted] (11.173, 0.300) -- (11.700, 1.212);
\draw[thin] (10.173, 0.300) -- (11.700, 2.944);
\draw[thin,dotted] (9.173, 0.300) -- (11.700, 4.677);
\draw[thin] (8.173, 0.300) -- (11.700, 6.409);
\draw[thin,dotted] (7.173, 0.300) -- (11.700, 8.141);
\draw[thin] (6.173, 0.300) -- (11.700, 9.873);
\draw[thin,dotted] (5.173, 0.300) -- (11.700, 11.605);
\draw[thin] (4.173, 0.300) -- (10.755, 11.700);
\draw[thin,dotted] (3.173, 0.300) -- (9.755, 11.700);
\draw[thin] (2.173, 0.300) -- (8.755, 11.700);
\draw[thin,dotted] (1.173, 0.300) -- (7.755, 11.700);
\draw[thin] (0.300, 0.520) -- (6.755, 11.700);
\draw[thin,dotted] (0.300, 2.252) -- (5.755, 11.700);
\draw[thin] (0.300, 3.984) -- (4.755, 11.700);
\draw[thin,dotted] (0.300, 5.716) -- (3.755, 11.700);
\draw[thin] (0.300, 7.448) -- (2.755, 11.700);
\draw[thin,dotted] (0.300, 9.180) -- (1.755, 11.700);
\draw[thin] (0.300, 10.912) -- (0.755, 11.700);
\draw[thin,dotted] (0.300, 1.212) -- (0.827, 0.300);
\draw[thin] (0.300, 2.944) -- (1.827, 0.300);
\draw[thin,dotted] (0.300, 4.677) -- (2.827, 0.300);
\draw[thin] (0.300, 6.409) -- (3.827, 0.300);
\draw[thin,dotted] (0.300, 8.141) -- (4.827, 0.300);
\draw[thin] (0.300, 9.873) -- (5.827, 0.300);
\draw[thin,dotted] (0.300, 11.605) -- (6.827, 0.300);
\draw[thin] (1.245, 11.700) -- (7.827, 0.300);
\draw[thin,dotted] (2.245, 11.700) -- (8.827, 0.300);
\draw[thin] (3.245, 11.700) -- (9.827, 0.300);
\draw[thin,dotted] (4.245, 11.700) -- (10.827, 0.300);
\draw[thin] (5.245, 11.700) -- (11.700, 0.520);
\draw[thin,dotted] (6.245, 11.700) -- (11.700, 2.252);
\draw[thin] (7.245, 11.700) -- (11.700, 3.984);
\draw[thin,dotted] (8.245, 11.700) -- (11.700, 5.716);
\draw[thin] (9.245, 11.700) -- (11.700, 7.448);
\draw[thin,dotted] (10.245, 11.700) -- (11.700, 9.180);
\draw[thin] (11.245, 11.700) -- (11.700, 10.912);
\draw[thin,dotted] (0.300, 0.866) -- (11.700, 0.866);
\draw[thin] (0.300, 1.732) -- (11.700, 1.732);
\draw[thin,dotted] (0.300, 2.598) -- (11.700, 2.598);
\draw[thin] (0.300, 3.464) -- (11.700, 3.464);
\draw[thin,dotted] (0.300, 4.330) -- (11.700, 4.330);
\draw[thin] (0.300, 5.196) -- (11.700, 5.196);
\draw[thin,dotted] (0.300, 6.062) -- (11.700, 6.062);
\draw[thin] (0.300, 6.928) -- (11.700, 6.928);
\draw[thin,dotted] (0.300, 7.794) -- (11.700, 7.794);
\draw[thin] (0.300, 8.660) -- (11.700, 8.660);
\draw[thin,dotted] (0.300, 9.526) -- (11.700, 9.526);
\draw[thin] (0.300, 10.392) -- (11.700, 10.392);
\draw[thin,dotted] (0.300, 11.258) -- (11.700, 11.258);		
		\draw(6,-0.75) node{\footnotesize $(r,c) = (1,2)$};	
	\end{tikzpicture}
&
	\begin{tikzpicture}[scale = 0.4]
		\draw[color=black, fill=cyan] (6.000, 8.660) -- (3.500, 4.330) -- (4.000, 3.464) -- (8.000, 3.464) -- (8.500, 4.330) -- cycle;
		\draw[pattern={north east lines}] (5.000, 8.660) -- (3.000, 5.196) -- (4.000, 3.464) -- (8.000, 3.464) -- (9.000, 5.196) -- (7.000, 8.660) -- cycle;
		\draw[thin,dotted] (11.173, 0.300) -- (11.700, 1.212);
\draw[thin] (10.173, 0.300) -- (11.700, 2.944);
\draw[thin,dotted] (9.173, 0.300) -- (11.700, 4.677);
\draw[thin] (8.173, 0.300) -- (11.700, 6.409);
\draw[thin,dotted] (7.173, 0.300) -- (11.700, 8.141);
\draw[thin] (6.173, 0.300) -- (11.700, 9.873);
\draw[thin,dotted] (5.173, 0.300) -- (11.700, 11.605);
\draw[thin] (4.173, 0.300) -- (10.755, 11.700);
\draw[thin,dotted] (3.173, 0.300) -- (9.755, 11.700);
\draw[thin] (2.173, 0.300) -- (8.755, 11.700);
\draw[thin,dotted] (1.173, 0.300) -- (7.755, 11.700);
\draw[thin] (0.300, 0.520) -- (6.755, 11.700);
\draw[thin,dotted] (0.300, 2.252) -- (5.755, 11.700);
\draw[thin] (0.300, 3.984) -- (4.755, 11.700);
\draw[thin,dotted] (0.300, 5.716) -- (3.755, 11.700);
\draw[thin] (0.300, 7.448) -- (2.755, 11.700);
\draw[thin,dotted] (0.300, 9.180) -- (1.755, 11.700);
\draw[thin] (0.300, 10.912) -- (0.755, 11.700);
\draw[thin,dotted] (0.300, 1.212) -- (0.827, 0.300);
\draw[thin] (0.300, 2.944) -- (1.827, 0.300);
\draw[thin,dotted] (0.300, 4.677) -- (2.827, 0.300);
\draw[thin] (0.300, 6.409) -- (3.827, 0.300);
\draw[thin,dotted] (0.300, 8.141) -- (4.827, 0.300);
\draw[thin] (0.300, 9.873) -- (5.827, 0.300);
\draw[thin,dotted] (0.300, 11.605) -- (6.827, 0.300);
\draw[thin] (1.245, 11.700) -- (7.827, 0.300);
\draw[thin,dotted] (2.245, 11.700) -- (8.827, 0.300);
\draw[thin] (3.245, 11.700) -- (9.827, 0.300);
\draw[thin,dotted] (4.245, 11.700) -- (10.827, 0.300);
\draw[thin] (5.245, 11.700) -- (11.700, 0.520);
\draw[thin,dotted] (6.245, 11.700) -- (11.700, 2.252);
\draw[thin] (7.245, 11.700) -- (11.700, 3.984);
\draw[thin,dotted] (8.245, 11.700) -- (11.700, 5.716);
\draw[thin] (9.245, 11.700) -- (11.700, 7.448);
\draw[thin,dotted] (10.245, 11.700) -- (11.700, 9.180);
\draw[thin] (11.245, 11.700) -- (11.700, 10.912);
\draw[thin,dotted] (0.300, 0.866) -- (11.700, 0.866);
\draw[thin] (0.300, 1.732) -- (11.700, 1.732);
\draw[thin,dotted] (0.300, 2.598) -- (11.700, 2.598);
\draw[thin] (0.300, 3.464) -- (11.700, 3.464);
\draw[thin,dotted] (0.300, 4.330) -- (11.700, 4.330);
\draw[thin] (0.300, 5.196) -- (11.700, 5.196);
\draw[thin,dotted] (0.300, 6.062) -- (11.700, 6.062);
\draw[thin] (0.300, 6.928) -- (11.700, 6.928);
\draw[thin,dotted] (0.300, 7.794) -- (11.700, 7.794);
\draw[thin] (0.300, 8.660) -- (11.700, 8.660);
\draw[thin,dotted] (0.300, 9.526) -- (11.700, 9.526);
\draw[thin] (0.300, 10.392) -- (11.700, 10.392);
\draw[thin,dotted] (0.300, 11.258) -- (11.700, 11.258);
		\draw(6,-0.75) node{\footnotesize $(r,c) = (2,2)$};	
	\end{tikzpicture}
\end{tabular}
\end{center}
\caption{The blue shaded regions represent elements of $P_{r,c}(\mytikz{0.07}{\protect\trilower},T, 2n)$, for the different values of $(r,c)$. The illustrations show how these are modified to reach the hatched regions, from which we can deduce the recursive expression in \eqref{eq:recCeven}.}
\label{fig:recCeven}
\end{figure}

\begin{equation}
\label{eq:recCodd}
\begin{split}
C_{2n+1} 
&= -6 + \sum_{(r,c)\in I} |P_{r,c}(\mytikz{0.08}{\trilower},T, 2n+1)| \\
&= -6 + C_{n+1} + B_{n+1} + B_{n+1} + D_{n+2}.
\end{split}
\end{equation}
See Figure~\ref{fig:recCodd} for a visualisation of the deduction of this recursion.

\begin{figure}[ht]
\begin{center}
\begin{tabular}{cc}
	\begin{tikzpicture}[scale = 0.4]
		\draw[color=black, fill=cyan] (6.000, 10.392) -- (3.000, 5.196) -- (3.500, 4.330) -- (8.500, 4.330) -- (9.000, 5.196) -- cycle;
		\draw[pattern={north east lines}] (6.000, 10.392) -- (3.000, 5.196) -- (4.000, 3.464) -- (8.000, 3.464) -- (9.000, 5.196) -- cycle;
		\draw[thin,dotted] (11.173, 0.300) -- (11.700, 1.212);
\draw[thin] (10.173, 0.300) -- (11.700, 2.944);
\draw[thin,dotted] (9.173, 0.300) -- (11.700, 4.677);
\draw[thin] (8.173, 0.300) -- (11.700, 6.409);
\draw[thin,dotted] (7.173, 0.300) -- (11.700, 8.141);
\draw[thin] (6.173, 0.300) -- (11.700, 9.873);
\draw[thin,dotted] (5.173, 0.300) -- (11.700, 11.605);
\draw[thin] (4.173, 0.300) -- (10.755, 11.700);
\draw[thin,dotted] (3.173, 0.300) -- (9.755, 11.700);
\draw[thin] (2.173, 0.300) -- (8.755, 11.700);
\draw[thin,dotted] (1.173, 0.300) -- (7.755, 11.700);
\draw[thin] (0.300, 0.520) -- (6.755, 11.700);
\draw[thin,dotted] (0.300, 2.252) -- (5.755, 11.700);
\draw[thin] (0.300, 3.984) -- (4.755, 11.700);
\draw[thin,dotted] (0.300, 5.716) -- (3.755, 11.700);
\draw[thin] (0.300, 7.448) -- (2.755, 11.700);
\draw[thin,dotted] (0.300, 9.180) -- (1.755, 11.700);
\draw[thin] (0.300, 10.912) -- (0.755, 11.700);
\draw[thin,dotted] (0.300, 1.212) -- (0.827, 0.300);
\draw[thin] (0.300, 2.944) -- (1.827, 0.300);
\draw[thin,dotted] (0.300, 4.677) -- (2.827, 0.300);
\draw[thin] (0.300, 6.409) -- (3.827, 0.300);
\draw[thin,dotted] (0.300, 8.141) -- (4.827, 0.300);
\draw[thin] (0.300, 9.873) -- (5.827, 0.300);
\draw[thin,dotted] (0.300, 11.605) -- (6.827, 0.300);
\draw[thin] (1.245, 11.700) -- (7.827, 0.300);
\draw[thin,dotted] (2.245, 11.700) -- (8.827, 0.300);
\draw[thin] (3.245, 11.700) -- (9.827, 0.300);
\draw[thin,dotted] (4.245, 11.700) -- (10.827, 0.300);
\draw[thin] (5.245, 11.700) -- (11.700, 0.520);
\draw[thin,dotted] (6.245, 11.700) -- (11.700, 2.252);
\draw[thin] (7.245, 11.700) -- (11.700, 3.984);
\draw[thin,dotted] (8.245, 11.700) -- (11.700, 5.716);
\draw[thin] (9.245, 11.700) -- (11.700, 7.448);
\draw[thin,dotted] (10.245, 11.700) -- (11.700, 9.180);
\draw[thin] (11.245, 11.700) -- (11.700, 10.912);
\draw[thin,dotted] (0.300, 0.866) -- (11.700, 0.866);
\draw[thin] (0.300, 1.732) -- (11.700, 1.732);
\draw[thin,dotted] (0.300, 2.598) -- (11.700, 2.598);
\draw[thin] (0.300, 3.464) -- (11.700, 3.464);
\draw[thin,dotted] (0.300, 4.330) -- (11.700, 4.330);
\draw[thin] (0.300, 5.196) -- (11.700, 5.196);
\draw[thin,dotted] (0.300, 6.062) -- (11.700, 6.062);
\draw[thin] (0.300, 6.928) -- (11.700, 6.928);
\draw[thin,dotted] (0.300, 7.794) -- (11.700, 7.794);
\draw[thin] (0.300, 8.660) -- (11.700, 8.660);
\draw[thin,dotted] (0.300, 9.526) -- (11.700, 9.526);
\draw[thin] (0.300, 10.392) -- (11.700, 10.392);
\draw[thin,dotted] (0.300, 11.258) -- (11.700, 11.258);		
		\draw(6,-0.75) node{\footnotesize $(r,c) = (0,0)$};	
	\end{tikzpicture}
&
	\begin{tikzpicture}[scale = 0.4]
		\draw[color=black, fill=cyan] (5.500, 9.526) -- (2.500, 4.330) -- (3.000, 3.464) -- (8.000, 3.464) -- (8.500, 4.330) -- cycle;
		\draw[pattern={north east lines}] (6.000, 10.392) -- (2.000, 3.464) -- (8.000, 3.464) -- (9.000, 5.196) -- cycle;
		\draw[thin,dotted] (11.173, 0.300) -- (11.700, 1.212);
\draw[thin] (10.173, 0.300) -- (11.700, 2.944);
\draw[thin,dotted] (9.173, 0.300) -- (11.700, 4.677);
\draw[thin] (8.173, 0.300) -- (11.700, 6.409);
\draw[thin,dotted] (7.173, 0.300) -- (11.700, 8.141);
\draw[thin] (6.173, 0.300) -- (11.700, 9.873);
\draw[thin,dotted] (5.173, 0.300) -- (11.700, 11.605);
\draw[thin] (4.173, 0.300) -- (10.755, 11.700);
\draw[thin,dotted] (3.173, 0.300) -- (9.755, 11.700);
\draw[thin] (2.173, 0.300) -- (8.755, 11.700);
\draw[thin,dotted] (1.173, 0.300) -- (7.755, 11.700);
\draw[thin] (0.300, 0.520) -- (6.755, 11.700);
\draw[thin,dotted] (0.300, 2.252) -- (5.755, 11.700);
\draw[thin] (0.300, 3.984) -- (4.755, 11.700);
\draw[thin,dotted] (0.300, 5.716) -- (3.755, 11.700);
\draw[thin] (0.300, 7.448) -- (2.755, 11.700);
\draw[thin,dotted] (0.300, 9.180) -- (1.755, 11.700);
\draw[thin] (0.300, 10.912) -- (0.755, 11.700);
\draw[thin,dotted] (0.300, 1.212) -- (0.827, 0.300);
\draw[thin] (0.300, 2.944) -- (1.827, 0.300);
\draw[thin,dotted] (0.300, 4.677) -- (2.827, 0.300);
\draw[thin] (0.300, 6.409) -- (3.827, 0.300);
\draw[thin,dotted] (0.300, 8.141) -- (4.827, 0.300);
\draw[thin] (0.300, 9.873) -- (5.827, 0.300);
\draw[thin,dotted] (0.300, 11.605) -- (6.827, 0.300);
\draw[thin] (1.245, 11.700) -- (7.827, 0.300);
\draw[thin,dotted] (2.245, 11.700) -- (8.827, 0.300);
\draw[thin] (3.245, 11.700) -- (9.827, 0.300);
\draw[thin,dotted] (4.245, 11.700) -- (10.827, 0.300);
\draw[thin] (5.245, 11.700) -- (11.700, 0.520);
\draw[thin,dotted] (6.245, 11.700) -- (11.700, 2.252);
\draw[thin] (7.245, 11.700) -- (11.700, 3.984);
\draw[thin,dotted] (8.245, 11.700) -- (11.700, 5.716);
\draw[thin] (9.245, 11.700) -- (11.700, 7.448);
\draw[thin,dotted] (10.245, 11.700) -- (11.700, 9.180);
\draw[thin] (11.245, 11.700) -- (11.700, 10.912);
\draw[thin,dotted] (0.300, 0.866) -- (11.700, 0.866);
\draw[thin] (0.300, 1.732) -- (11.700, 1.732);
\draw[thin,dotted] (0.300, 2.598) -- (11.700, 2.598);
\draw[thin] (0.300, 3.464) -- (11.700, 3.464);
\draw[thin,dotted] (0.300, 4.330) -- (11.700, 4.330);
\draw[thin] (0.300, 5.196) -- (11.700, 5.196);
\draw[thin,dotted] (0.300, 6.062) -- (11.700, 6.062);
\draw[thin] (0.300, 6.928) -- (11.700, 6.928);
\draw[thin,dotted] (0.300, 7.794) -- (11.700, 7.794);
\draw[thin] (0.300, 8.660) -- (11.700, 8.660);
\draw[thin,dotted] (0.300, 9.526) -- (11.700, 9.526);
\draw[thin] (0.300, 10.392) -- (11.700, 10.392);
\draw[thin,dotted] (0.300, 11.258) -- (11.700, 11.258);		
		\draw(6,-0.75) node{\footnotesize $(r,c) = (1,0)$};	
	\end{tikzpicture}
\\
	\begin{tikzpicture}[scale = 0.4]
		\draw[color=black, fill=cyan] (6.500, 9.526) -- (3.500, 4.330) -- (4.000, 3.464) -- (9.000, 3.464) -- (9.500, 4.330) -- cycle;
		\draw[pattern={north east lines}] (6.000, 10.392) -- (3.000, 5.196) -- (4.000, 3.464) -- (10.000, 3.464) -- cycle;
		\draw[thin,dotted] (11.173, 0.300) -- (11.700, 1.212);
\draw[thin] (10.173, 0.300) -- (11.700, 2.944);
\draw[thin,dotted] (9.173, 0.300) -- (11.700, 4.677);
\draw[thin] (8.173, 0.300) -- (11.700, 6.409);
\draw[thin,dotted] (7.173, 0.300) -- (11.700, 8.141);
\draw[thin] (6.173, 0.300) -- (11.700, 9.873);
\draw[thin,dotted] (5.173, 0.300) -- (11.700, 11.605);
\draw[thin] (4.173, 0.300) -- (10.755, 11.700);
\draw[thin,dotted] (3.173, 0.300) -- (9.755, 11.700);
\draw[thin] (2.173, 0.300) -- (8.755, 11.700);
\draw[thin,dotted] (1.173, 0.300) -- (7.755, 11.700);
\draw[thin] (0.300, 0.520) -- (6.755, 11.700);
\draw[thin,dotted] (0.300, 2.252) -- (5.755, 11.700);
\draw[thin] (0.300, 3.984) -- (4.755, 11.700);
\draw[thin,dotted] (0.300, 5.716) -- (3.755, 11.700);
\draw[thin] (0.300, 7.448) -- (2.755, 11.700);
\draw[thin,dotted] (0.300, 9.180) -- (1.755, 11.700);
\draw[thin] (0.300, 10.912) -- (0.755, 11.700);
\draw[thin,dotted] (0.300, 1.212) -- (0.827, 0.300);
\draw[thin] (0.300, 2.944) -- (1.827, 0.300);
\draw[thin,dotted] (0.300, 4.677) -- (2.827, 0.300);
\draw[thin] (0.300, 6.409) -- (3.827, 0.300);
\draw[thin,dotted] (0.300, 8.141) -- (4.827, 0.300);
\draw[thin] (0.300, 9.873) -- (5.827, 0.300);
\draw[thin,dotted] (0.300, 11.605) -- (6.827, 0.300);
\draw[thin] (1.245, 11.700) -- (7.827, 0.300);
\draw[thin,dotted] (2.245, 11.700) -- (8.827, 0.300);
\draw[thin] (3.245, 11.700) -- (9.827, 0.300);
\draw[thin,dotted] (4.245, 11.700) -- (10.827, 0.300);
\draw[thin] (5.245, 11.700) -- (11.700, 0.520);
\draw[thin,dotted] (6.245, 11.700) -- (11.700, 2.252);
\draw[thin] (7.245, 11.700) -- (11.700, 3.984);
\draw[thin,dotted] (8.245, 11.700) -- (11.700, 5.716);
\draw[thin] (9.245, 11.700) -- (11.700, 7.448);
\draw[thin,dotted] (10.245, 11.700) -- (11.700, 9.180);
\draw[thin] (11.245, 11.700) -- (11.700, 10.912);
\draw[thin,dotted] (0.300, 0.866) -- (11.700, 0.866);
\draw[thin] (0.300, 1.732) -- (11.700, 1.732);
\draw[thin,dotted] (0.300, 2.598) -- (11.700, 2.598);
\draw[thin] (0.300, 3.464) -- (11.700, 3.464);
\draw[thin,dotted] (0.300, 4.330) -- (11.700, 4.330);
\draw[thin] (0.300, 5.196) -- (11.700, 5.196);
\draw[thin,dotted] (0.300, 6.062) -- (11.700, 6.062);
\draw[thin] (0.300, 6.928) -- (11.700, 6.928);
\draw[thin,dotted] (0.300, 7.794) -- (11.700, 7.794);
\draw[thin] (0.300, 8.660) -- (11.700, 8.660);
\draw[thin,dotted] (0.300, 9.526) -- (11.700, 9.526);
\draw[thin] (0.300, 10.392) -- (11.700, 10.392);
\draw[thin,dotted] (0.300, 11.258) -- (11.700, 11.258);		
		\draw(6,-0.75) node{\footnotesize $(r,c) = (1,2)$};	
	\end{tikzpicture}
&
	\begin{tikzpicture}[scale = 0.4]
		\draw[color=black, fill=cyan] (6.000, 8.660) -- (3.000, 3.464) -- (3.500, 2.598) -- (8.500, 2.598) -- (9.000, 3.464) -- cycle;
		\draw[pattern={north east lines}] (5.000, 8.660) -- (2.000, 3.464) -- (3.000, 1.732) -- (9.000, 1.732) -- (10.000, 3.464) -- (7.000, 8.660) -- cycle;
		\draw[thin,dotted] (11.173, 0.300) -- (11.700, 1.212);
\draw[thin] (10.173, 0.300) -- (11.700, 2.944);
\draw[thin,dotted] (9.173, 0.300) -- (11.700, 4.677);
\draw[thin] (8.173, 0.300) -- (11.700, 6.409);
\draw[thin,dotted] (7.173, 0.300) -- (11.700, 8.141);
\draw[thin] (6.173, 0.300) -- (11.700, 9.873);
\draw[thin,dotted] (5.173, 0.300) -- (11.700, 11.605);
\draw[thin] (4.173, 0.300) -- (10.755, 11.700);
\draw[thin,dotted] (3.173, 0.300) -- (9.755, 11.700);
\draw[thin] (2.173, 0.300) -- (8.755, 11.700);
\draw[thin,dotted] (1.173, 0.300) -- (7.755, 11.700);
\draw[thin] (0.300, 0.520) -- (6.755, 11.700);
\draw[thin,dotted] (0.300, 2.252) -- (5.755, 11.700);
\draw[thin] (0.300, 3.984) -- (4.755, 11.700);
\draw[thin,dotted] (0.300, 5.716) -- (3.755, 11.700);
\draw[thin] (0.300, 7.448) -- (2.755, 11.700);
\draw[thin,dotted] (0.300, 9.180) -- (1.755, 11.700);
\draw[thin] (0.300, 10.912) -- (0.755, 11.700);
\draw[thin,dotted] (0.300, 1.212) -- (0.827, 0.300);
\draw[thin] (0.300, 2.944) -- (1.827, 0.300);
\draw[thin,dotted] (0.300, 4.677) -- (2.827, 0.300);
\draw[thin] (0.300, 6.409) -- (3.827, 0.300);
\draw[thin,dotted] (0.300, 8.141) -- (4.827, 0.300);
\draw[thin] (0.300, 9.873) -- (5.827, 0.300);
\draw[thin,dotted] (0.300, 11.605) -- (6.827, 0.300);
\draw[thin] (1.245, 11.700) -- (7.827, 0.300);
\draw[thin,dotted] (2.245, 11.700) -- (8.827, 0.300);
\draw[thin] (3.245, 11.700) -- (9.827, 0.300);
\draw[thin,dotted] (4.245, 11.700) -- (10.827, 0.300);
\draw[thin] (5.245, 11.700) -- (11.700, 0.520);
\draw[thin,dotted] (6.245, 11.700) -- (11.700, 2.252);
\draw[thin] (7.245, 11.700) -- (11.700, 3.984);
\draw[thin,dotted] (8.245, 11.700) -- (11.700, 5.716);
\draw[thin] (9.245, 11.700) -- (11.700, 7.448);
\draw[thin,dotted] (10.245, 11.700) -- (11.700, 9.180);
\draw[thin] (11.245, 11.700) -- (11.700, 10.912);
\draw[thin,dotted] (0.300, 0.866) -- (11.700, 0.866);
\draw[thin] (0.300, 1.732) -- (11.700, 1.732);
\draw[thin,dotted] (0.300, 2.598) -- (11.700, 2.598);
\draw[thin] (0.300, 3.464) -- (11.700, 3.464);
\draw[thin,dotted] (0.300, 4.330) -- (11.700, 4.330);
\draw[thin] (0.300, 5.196) -- (11.700, 5.196);
\draw[thin,dotted] (0.300, 6.062) -- (11.700, 6.062);
\draw[thin] (0.300, 6.928) -- (11.700, 6.928);
\draw[thin,dotted] (0.300, 7.794) -- (11.700, 7.794);
\draw[thin] (0.300, 8.660) -- (11.700, 8.660);
\draw[thin,dotted] (0.300, 9.526) -- (11.700, 9.526);
\draw[thin] (0.300, 10.392) -- (11.700, 10.392);
\draw[thin,dotted] (0.300, 11.258) -- (11.700, 11.258);
		\draw(6,-0.75) node{\footnotesize $(r,c) = (2,2)$};	
	\end{tikzpicture}
\end{tabular}
\end{center}
\caption{The blue shaded regions represent elements of $P_{r,c}(\mytikz{0.07}{\protect\trilower},T, 2n+1)$, for the different values of $(r,c)$. The illustrations show how these are modified to reach the hatched regions, from which we can deduce the recursive expression in \eqref{eq:recCodd}.}
\label{fig:recCodd}
\end{figure}

\begin{equation}
\label{eq:recDeven}
\begin{split}
D_{2n} 
&= -6 + \sum_{(r,c)\in I} |P_{r,c}(\mytikz{0.08}{\triall},T, 2n)| \\
&= -6 + A_{n} + D_{n+1} + D_{n+1} + D_{n+1}.
\end{split}
\end{equation}
See Figure~\ref{fig:recDeven} for a visualisation of the deduction of this recursion.

\begin{figure}[ht]
\begin{center}
\begin{tabular}{cc}
	\begin{tikzpicture}[scale = 0.4]
		\draw[color=black, fill=cyan] (5.500, 9.526) -- (3.500, 6.062) -- (4.000, 5.196) -- (8.000, 5.196) -- (8.500, 6.062) -- (6.500, 9.526) -- cycle;
		\draw[pattern={north east lines}] (6.000, 10.392) -- (3.000, 5.196) -- (9.000, 5.196) -- cycle;
		\draw[thin,dotted] (11.173, 0.300) -- (11.700, 1.212);
\draw[thin] (10.173, 0.300) -- (11.700, 2.944);
\draw[thin,dotted] (9.173, 0.300) -- (11.700, 4.677);
\draw[thin] (8.173, 0.300) -- (11.700, 6.409);
\draw[thin,dotted] (7.173, 0.300) -- (11.700, 8.141);
\draw[thin] (6.173, 0.300) -- (11.700, 9.873);
\draw[thin,dotted] (5.173, 0.300) -- (11.700, 11.605);
\draw[thin] (4.173, 0.300) -- (10.755, 11.700);
\draw[thin,dotted] (3.173, 0.300) -- (9.755, 11.700);
\draw[thin] (2.173, 0.300) -- (8.755, 11.700);
\draw[thin,dotted] (1.173, 0.300) -- (7.755, 11.700);
\draw[thin] (0.300, 0.520) -- (6.755, 11.700);
\draw[thin,dotted] (0.300, 2.252) -- (5.755, 11.700);
\draw[thin] (0.300, 3.984) -- (4.755, 11.700);
\draw[thin,dotted] (0.300, 5.716) -- (3.755, 11.700);
\draw[thin] (0.300, 7.448) -- (2.755, 11.700);
\draw[thin,dotted] (0.300, 9.180) -- (1.755, 11.700);
\draw[thin] (0.300, 10.912) -- (0.755, 11.700);
\draw[thin,dotted] (0.300, 1.212) -- (0.827, 0.300);
\draw[thin] (0.300, 2.944) -- (1.827, 0.300);
\draw[thin,dotted] (0.300, 4.677) -- (2.827, 0.300);
\draw[thin] (0.300, 6.409) -- (3.827, 0.300);
\draw[thin,dotted] (0.300, 8.141) -- (4.827, 0.300);
\draw[thin] (0.300, 9.873) -- (5.827, 0.300);
\draw[thin,dotted] (0.300, 11.605) -- (6.827, 0.300);
\draw[thin] (1.245, 11.700) -- (7.827, 0.300);
\draw[thin,dotted] (2.245, 11.700) -- (8.827, 0.300);
\draw[thin] (3.245, 11.700) -- (9.827, 0.300);
\draw[thin,dotted] (4.245, 11.700) -- (10.827, 0.300);
\draw[thin] (5.245, 11.700) -- (11.700, 0.520);
\draw[thin,dotted] (6.245, 11.700) -- (11.700, 2.252);
\draw[thin] (7.245, 11.700) -- (11.700, 3.984);
\draw[thin,dotted] (8.245, 11.700) -- (11.700, 5.716);
\draw[thin] (9.245, 11.700) -- (11.700, 7.448);
\draw[thin,dotted] (10.245, 11.700) -- (11.700, 9.180);
\draw[thin] (11.245, 11.700) -- (11.700, 10.912);
\draw[thin,dotted] (0.300, 0.866) -- (11.700, 0.866);
\draw[thin] (0.300, 1.732) -- (11.700, 1.732);
\draw[thin,dotted] (0.300, 2.598) -- (11.700, 2.598);
\draw[thin] (0.300, 3.464) -- (11.700, 3.464);
\draw[thin,dotted] (0.300, 4.330) -- (11.700, 4.330);
\draw[thin] (0.300, 5.196) -- (11.700, 5.196);
\draw[thin,dotted] (0.300, 6.062) -- (11.700, 6.062);
\draw[thin] (0.300, 6.928) -- (11.700, 6.928);
\draw[thin,dotted] (0.300, 7.794) -- (11.700, 7.794);
\draw[thin] (0.300, 8.660) -- (11.700, 8.660);
\draw[thin,dotted] (0.300, 9.526) -- (11.700, 9.526);
\draw[thin] (0.300, 10.392) -- (11.700, 10.392);
\draw[thin,dotted] (0.300, 11.258) -- (11.700, 11.258);		
		\draw(6,-0.75) node{\footnotesize $(r,c) = (0,0)$};	
	\end{tikzpicture}
&
	\begin{tikzpicture}[scale = 0.4]
		\draw[color=black, fill=cyan] (5.000, 8.660) -- (3.000, 5.196) -- (3.500, 4.330) -- (7.500, 4.330) -- (8.000, 5.196) -- (6.000, 8.660) -- cycle;
		\draw[pattern={north east lines}] (5.000, 8.660) -- (3.000, 5.196) -- (4.000, 3.464) -- (8.000, 3.464) -- (9.000, 5.196) -- (7.000, 8.660) -- cycle;
		\draw[thin,dotted] (11.173, 0.300) -- (11.700, 1.212);
\draw[thin] (10.173, 0.300) -- (11.700, 2.944);
\draw[thin,dotted] (9.173, 0.300) -- (11.700, 4.677);
\draw[thin] (8.173, 0.300) -- (11.700, 6.409);
\draw[thin,dotted] (7.173, 0.300) -- (11.700, 8.141);
\draw[thin] (6.173, 0.300) -- (11.700, 9.873);
\draw[thin,dotted] (5.173, 0.300) -- (11.700, 11.605);
\draw[thin] (4.173, 0.300) -- (10.755, 11.700);
\draw[thin,dotted] (3.173, 0.300) -- (9.755, 11.700);
\draw[thin] (2.173, 0.300) -- (8.755, 11.700);
\draw[thin,dotted] (1.173, 0.300) -- (7.755, 11.700);
\draw[thin] (0.300, 0.520) -- (6.755, 11.700);
\draw[thin,dotted] (0.300, 2.252) -- (5.755, 11.700);
\draw[thin] (0.300, 3.984) -- (4.755, 11.700);
\draw[thin,dotted] (0.300, 5.716) -- (3.755, 11.700);
\draw[thin] (0.300, 7.448) -- (2.755, 11.700);
\draw[thin,dotted] (0.300, 9.180) -- (1.755, 11.700);
\draw[thin] (0.300, 10.912) -- (0.755, 11.700);
\draw[thin,dotted] (0.300, 1.212) -- (0.827, 0.300);
\draw[thin] (0.300, 2.944) -- (1.827, 0.300);
\draw[thin,dotted] (0.300, 4.677) -- (2.827, 0.300);
\draw[thin] (0.300, 6.409) -- (3.827, 0.300);
\draw[thin,dotted] (0.300, 8.141) -- (4.827, 0.300);
\draw[thin] (0.300, 9.873) -- (5.827, 0.300);
\draw[thin,dotted] (0.300, 11.605) -- (6.827, 0.300);
\draw[thin] (1.245, 11.700) -- (7.827, 0.300);
\draw[thin,dotted] (2.245, 11.700) -- (8.827, 0.300);
\draw[thin] (3.245, 11.700) -- (9.827, 0.300);
\draw[thin,dotted] (4.245, 11.700) -- (10.827, 0.300);
\draw[thin] (5.245, 11.700) -- (11.700, 0.520);
\draw[thin,dotted] (6.245, 11.700) -- (11.700, 2.252);
\draw[thin] (7.245, 11.700) -- (11.700, 3.984);
\draw[thin,dotted] (8.245, 11.700) -- (11.700, 5.716);
\draw[thin] (9.245, 11.700) -- (11.700, 7.448);
\draw[thin,dotted] (10.245, 11.700) -- (11.700, 9.180);
\draw[thin] (11.245, 11.700) -- (11.700, 10.912);
\draw[thin,dotted] (0.300, 0.866) -- (11.700, 0.866);
\draw[thin] (0.300, 1.732) -- (11.700, 1.732);
\draw[thin,dotted] (0.300, 2.598) -- (11.700, 2.598);
\draw[thin] (0.300, 3.464) -- (11.700, 3.464);
\draw[thin,dotted] (0.300, 4.330) -- (11.700, 4.330);
\draw[thin] (0.300, 5.196) -- (11.700, 5.196);
\draw[thin,dotted] (0.300, 6.062) -- (11.700, 6.062);
\draw[thin] (0.300, 6.928) -- (11.700, 6.928);
\draw[thin,dotted] (0.300, 7.794) -- (11.700, 7.794);
\draw[thin] (0.300, 8.660) -- (11.700, 8.660);
\draw[thin,dotted] (0.300, 9.526) -- (11.700, 9.526);
\draw[thin] (0.300, 10.392) -- (11.700, 10.392);
\draw[thin,dotted] (0.300, 11.258) -- (11.700, 11.258);		
		\draw(6,-0.75) node{\footnotesize $(r,c) = (1,0)$};	
	\end{tikzpicture}
\\
	\begin{tikzpicture}[scale = 0.4]
		\draw[color=black, fill=cyan] (6.000, 8.660) -- (4.000, 5.196) -- (4.500, 4.330) -- (8.500, 4.330) -- (9.000, 5.196) -- (7.000, 8.660) -- cycle;
		\draw[pattern={north east lines}] (5.000, 8.660) -- (3.000, 5.196) -- (4.000, 3.464) -- (8.000, 3.464) -- (9.000, 5.196) -- (7.000, 8.660) -- cycle;
		\draw[thin,dotted] (11.173, 0.300) -- (11.700, 1.212);
\draw[thin] (10.173, 0.300) -- (11.700, 2.944);
\draw[thin,dotted] (9.173, 0.300) -- (11.700, 4.677);
\draw[thin] (8.173, 0.300) -- (11.700, 6.409);
\draw[thin,dotted] (7.173, 0.300) -- (11.700, 8.141);
\draw[thin] (6.173, 0.300) -- (11.700, 9.873);
\draw[thin,dotted] (5.173, 0.300) -- (11.700, 11.605);
\draw[thin] (4.173, 0.300) -- (10.755, 11.700);
\draw[thin,dotted] (3.173, 0.300) -- (9.755, 11.700);
\draw[thin] (2.173, 0.300) -- (8.755, 11.700);
\draw[thin,dotted] (1.173, 0.300) -- (7.755, 11.700);
\draw[thin] (0.300, 0.520) -- (6.755, 11.700);
\draw[thin,dotted] (0.300, 2.252) -- (5.755, 11.700);
\draw[thin] (0.300, 3.984) -- (4.755, 11.700);
\draw[thin,dotted] (0.300, 5.716) -- (3.755, 11.700);
\draw[thin] (0.300, 7.448) -- (2.755, 11.700);
\draw[thin,dotted] (0.300, 9.180) -- (1.755, 11.700);
\draw[thin] (0.300, 10.912) -- (0.755, 11.700);
\draw[thin,dotted] (0.300, 1.212) -- (0.827, 0.300);
\draw[thin] (0.300, 2.944) -- (1.827, 0.300);
\draw[thin,dotted] (0.300, 4.677) -- (2.827, 0.300);
\draw[thin] (0.300, 6.409) -- (3.827, 0.300);
\draw[thin,dotted] (0.300, 8.141) -- (4.827, 0.300);
\draw[thin] (0.300, 9.873) -- (5.827, 0.300);
\draw[thin,dotted] (0.300, 11.605) -- (6.827, 0.300);
\draw[thin] (1.245, 11.700) -- (7.827, 0.300);
\draw[thin,dotted] (2.245, 11.700) -- (8.827, 0.300);
\draw[thin] (3.245, 11.700) -- (9.827, 0.300);
\draw[thin,dotted] (4.245, 11.700) -- (10.827, 0.300);
\draw[thin] (5.245, 11.700) -- (11.700, 0.520);
\draw[thin,dotted] (6.245, 11.700) -- (11.700, 2.252);
\draw[thin] (7.245, 11.700) -- (11.700, 3.984);
\draw[thin,dotted] (8.245, 11.700) -- (11.700, 5.716);
\draw[thin] (9.245, 11.700) -- (11.700, 7.448);
\draw[thin,dotted] (10.245, 11.700) -- (11.700, 9.180);
\draw[thin] (11.245, 11.700) -- (11.700, 10.912);
\draw[thin,dotted] (0.300, 0.866) -- (11.700, 0.866);
\draw[thin] (0.300, 1.732) -- (11.700, 1.732);
\draw[thin,dotted] (0.300, 2.598) -- (11.700, 2.598);
\draw[thin] (0.300, 3.464) -- (11.700, 3.464);
\draw[thin,dotted] (0.300, 4.330) -- (11.700, 4.330);
\draw[thin] (0.300, 5.196) -- (11.700, 5.196);
\draw[thin,dotted] (0.300, 6.062) -- (11.700, 6.062);
\draw[thin] (0.300, 6.928) -- (11.700, 6.928);
\draw[thin,dotted] (0.300, 7.794) -- (11.700, 7.794);
\draw[thin] (0.300, 8.660) -- (11.700, 8.660);
\draw[thin,dotted] (0.300, 9.526) -- (11.700, 9.526);
\draw[thin] (0.300, 10.392) -- (11.700, 10.392);
\draw[thin,dotted] (0.300, 11.258) -- (11.700, 11.258);		
		\draw(6,-0.75) node{\footnotesize $(r,c) = (1,2)$};	
	\end{tikzpicture}
&
	\begin{tikzpicture}[scale = 0.4]
		\draw[color=black, fill=cyan] (5.500, 7.794) -- (3.500, 4.330) -- (4.000, 3.464) -- (8.000, 3.464) -- (8.500, 4.330) -- (6.500, 7.794) -- cycle;
		\draw[pattern={north east lines}] (5.000, 8.660) -- (3.000, 5.196) -- (4.000, 3.464) -- (8.000, 3.464) -- (9.000, 5.196) -- (7.000, 8.660) -- cycle;
		\draw[thin,dotted] (11.173, 0.300) -- (11.700, 1.212);
\draw[thin] (10.173, 0.300) -- (11.700, 2.944);
\draw[thin,dotted] (9.173, 0.300) -- (11.700, 4.677);
\draw[thin] (8.173, 0.300) -- (11.700, 6.409);
\draw[thin,dotted] (7.173, 0.300) -- (11.700, 8.141);
\draw[thin] (6.173, 0.300) -- (11.700, 9.873);
\draw[thin,dotted] (5.173, 0.300) -- (11.700, 11.605);
\draw[thin] (4.173, 0.300) -- (10.755, 11.700);
\draw[thin,dotted] (3.173, 0.300) -- (9.755, 11.700);
\draw[thin] (2.173, 0.300) -- (8.755, 11.700);
\draw[thin,dotted] (1.173, 0.300) -- (7.755, 11.700);
\draw[thin] (0.300, 0.520) -- (6.755, 11.700);
\draw[thin,dotted] (0.300, 2.252) -- (5.755, 11.700);
\draw[thin] (0.300, 3.984) -- (4.755, 11.700);
\draw[thin,dotted] (0.300, 5.716) -- (3.755, 11.700);
\draw[thin] (0.300, 7.448) -- (2.755, 11.700);
\draw[thin,dotted] (0.300, 9.180) -- (1.755, 11.700);
\draw[thin] (0.300, 10.912) -- (0.755, 11.700);
\draw[thin,dotted] (0.300, 1.212) -- (0.827, 0.300);
\draw[thin] (0.300, 2.944) -- (1.827, 0.300);
\draw[thin,dotted] (0.300, 4.677) -- (2.827, 0.300);
\draw[thin] (0.300, 6.409) -- (3.827, 0.300);
\draw[thin,dotted] (0.300, 8.141) -- (4.827, 0.300);
\draw[thin] (0.300, 9.873) -- (5.827, 0.300);
\draw[thin,dotted] (0.300, 11.605) -- (6.827, 0.300);
\draw[thin] (1.245, 11.700) -- (7.827, 0.300);
\draw[thin,dotted] (2.245, 11.700) -- (8.827, 0.300);
\draw[thin] (3.245, 11.700) -- (9.827, 0.300);
\draw[thin,dotted] (4.245, 11.700) -- (10.827, 0.300);
\draw[thin] (5.245, 11.700) -- (11.700, 0.520);
\draw[thin,dotted] (6.245, 11.700) -- (11.700, 2.252);
\draw[thin] (7.245, 11.700) -- (11.700, 3.984);
\draw[thin,dotted] (8.245, 11.700) -- (11.700, 5.716);
\draw[thin] (9.245, 11.700) -- (11.700, 7.448);
\draw[thin,dotted] (10.245, 11.700) -- (11.700, 9.180);
\draw[thin] (11.245, 11.700) -- (11.700, 10.912);
\draw[thin,dotted] (0.300, 0.866) -- (11.700, 0.866);
\draw[thin] (0.300, 1.732) -- (11.700, 1.732);
\draw[thin,dotted] (0.300, 2.598) -- (11.700, 2.598);
\draw[thin] (0.300, 3.464) -- (11.700, 3.464);
\draw[thin,dotted] (0.300, 4.330) -- (11.700, 4.330);
\draw[thin] (0.300, 5.196) -- (11.700, 5.196);
\draw[thin,dotted] (0.300, 6.062) -- (11.700, 6.062);
\draw[thin] (0.300, 6.928) -- (11.700, 6.928);
\draw[thin,dotted] (0.300, 7.794) -- (11.700, 7.794);
\draw[thin] (0.300, 8.660) -- (11.700, 8.660);
\draw[thin,dotted] (0.300, 9.526) -- (11.700, 9.526);
\draw[thin] (0.300, 10.392) -- (11.700, 10.392);
\draw[thin,dotted] (0.300, 11.258) -- (11.700, 11.258);
		\draw(6,-0.75) node{\footnotesize $(r,c) = (2,2)$};	
		\end{tikzpicture}
\end{tabular}
\end{center}
\caption{The blue shaded regions represent elements of $P_{r,c}(\mytikz{0.07}{\protect\triall},T, 2n)$, for the different values of $(r,c)$. The illustrations show how these are modified to reach the hatched regions, from which we can deduce the recursive expression in \eqref{eq:recDeven}.}
\label{fig:recDeven}
\end{figure}

\begin{equation}
\label{eq:recDodd}
\begin{split}
D_{2n+1} 
&= -6 + \sum_{(r,c)\in I} |P_{r,c}(\mytikz{0.08}{\triall},T, 2n+1)| \\
&= -6 + C_{n+1} + C_{n+1} + C_{n+1} + D_{n+2}.
\end{split}
\end{equation}
See Figure~\ref{fig:recDodd} for a visualisation of the deduction of this recursion.

\begin{figure}[ht]
\begin{center}
\begin{tabular}{cc}
	\begin{tikzpicture}[scale = 0.4]
		\draw[color=black, fill=cyan] (5.500, 9.526) -- (3.000, 5.196) -- (3.500, 4.330) -- (8.500, 4.330) -- (9.000, 5.196) -- (6.500, 9.526) -- cycle;
		\draw[pattern={north east lines}] (6.000, 10.392) -- (3.000, 5.196) -- (4.000, 3.464) -- (8.000, 3.464) -- (9.000, 5.196) -- cycle;
		\draw[thin,dotted] (11.173, 0.300) -- (11.700, 1.212);
\draw[thin] (10.173, 0.300) -- (11.700, 2.944);
\draw[thin,dotted] (9.173, 0.300) -- (11.700, 4.677);
\draw[thin] (8.173, 0.300) -- (11.700, 6.409);
\draw[thin,dotted] (7.173, 0.300) -- (11.700, 8.141);
\draw[thin] (6.173, 0.300) -- (11.700, 9.873);
\draw[thin,dotted] (5.173, 0.300) -- (11.700, 11.605);
\draw[thin] (4.173, 0.300) -- (10.755, 11.700);
\draw[thin,dotted] (3.173, 0.300) -- (9.755, 11.700);
\draw[thin] (2.173, 0.300) -- (8.755, 11.700);
\draw[thin,dotted] (1.173, 0.300) -- (7.755, 11.700);
\draw[thin] (0.300, 0.520) -- (6.755, 11.700);
\draw[thin,dotted] (0.300, 2.252) -- (5.755, 11.700);
\draw[thin] (0.300, 3.984) -- (4.755, 11.700);
\draw[thin,dotted] (0.300, 5.716) -- (3.755, 11.700);
\draw[thin] (0.300, 7.448) -- (2.755, 11.700);
\draw[thin,dotted] (0.300, 9.180) -- (1.755, 11.700);
\draw[thin] (0.300, 10.912) -- (0.755, 11.700);
\draw[thin,dotted] (0.300, 1.212) -- (0.827, 0.300);
\draw[thin] (0.300, 2.944) -- (1.827, 0.300);
\draw[thin,dotted] (0.300, 4.677) -- (2.827, 0.300);
\draw[thin] (0.300, 6.409) -- (3.827, 0.300);
\draw[thin,dotted] (0.300, 8.141) -- (4.827, 0.300);
\draw[thin] (0.300, 9.873) -- (5.827, 0.300);
\draw[thin,dotted] (0.300, 11.605) -- (6.827, 0.300);
\draw[thin] (1.245, 11.700) -- (7.827, 0.300);
\draw[thin,dotted] (2.245, 11.700) -- (8.827, 0.300);
\draw[thin] (3.245, 11.700) -- (9.827, 0.300);
\draw[thin,dotted] (4.245, 11.700) -- (10.827, 0.300);
\draw[thin] (5.245, 11.700) -- (11.700, 0.520);
\draw[thin,dotted] (6.245, 11.700) -- (11.700, 2.252);
\draw[thin] (7.245, 11.700) -- (11.700, 3.984);
\draw[thin,dotted] (8.245, 11.700) -- (11.700, 5.716);
\draw[thin] (9.245, 11.700) -- (11.700, 7.448);
\draw[thin,dotted] (10.245, 11.700) -- (11.700, 9.180);
\draw[thin] (11.245, 11.700) -- (11.700, 10.912);
\draw[thin,dotted] (0.300, 0.866) -- (11.700, 0.866);
\draw[thin] (0.300, 1.732) -- (11.700, 1.732);
\draw[thin,dotted] (0.300, 2.598) -- (11.700, 2.598);
\draw[thin] (0.300, 3.464) -- (11.700, 3.464);
\draw[thin,dotted] (0.300, 4.330) -- (11.700, 4.330);
\draw[thin] (0.300, 5.196) -- (11.700, 5.196);
\draw[thin,dotted] (0.300, 6.062) -- (11.700, 6.062);
\draw[thin] (0.300, 6.928) -- (11.700, 6.928);
\draw[thin,dotted] (0.300, 7.794) -- (11.700, 7.794);
\draw[thin] (0.300, 8.660) -- (11.700, 8.660);
\draw[thin,dotted] (0.300, 9.526) -- (11.700, 9.526);
\draw[thin] (0.300, 10.392) -- (11.700, 10.392);
\draw[thin,dotted] (0.300, 11.258) -- (11.700, 11.258);		
		\draw(6,-0.75) node{\footnotesize $(r,c) = (0,0)$};	
	\end{tikzpicture}
&
	\begin{tikzpicture}[scale = 0.4]
		\draw[color=black, fill=cyan] (5.000, 8.660) -- (2.500, 4.330) -- (3.000, 3.464) -- (8.000, 3.464) -- (8.500, 4.330) -- (6.000, 8.660) -- cycle;
		\draw[pattern={north east lines}] (5.000, 8.660) -- (2.000, 3.464) -- (8.000, 3.464) -- (9.000, 5.196) -- (7.000, 8.660) -- cycle;
		\draw[thin,dotted] (11.173, 0.300) -- (11.700, 1.212);
\draw[thin] (10.173, 0.300) -- (11.700, 2.944);
\draw[thin,dotted] (9.173, 0.300) -- (11.700, 4.677);
\draw[thin] (8.173, 0.300) -- (11.700, 6.409);
\draw[thin,dotted] (7.173, 0.300) -- (11.700, 8.141);
\draw[thin] (6.173, 0.300) -- (11.700, 9.873);
\draw[thin,dotted] (5.173, 0.300) -- (11.700, 11.605);
\draw[thin] (4.173, 0.300) -- (10.755, 11.700);
\draw[thin,dotted] (3.173, 0.300) -- (9.755, 11.700);
\draw[thin] (2.173, 0.300) -- (8.755, 11.700);
\draw[thin,dotted] (1.173, 0.300) -- (7.755, 11.700);
\draw[thin] (0.300, 0.520) -- (6.755, 11.700);
\draw[thin,dotted] (0.300, 2.252) -- (5.755, 11.700);
\draw[thin] (0.300, 3.984) -- (4.755, 11.700);
\draw[thin,dotted] (0.300, 5.716) -- (3.755, 11.700);
\draw[thin] (0.300, 7.448) -- (2.755, 11.700);
\draw[thin,dotted] (0.300, 9.180) -- (1.755, 11.700);
\draw[thin] (0.300, 10.912) -- (0.755, 11.700);
\draw[thin,dotted] (0.300, 1.212) -- (0.827, 0.300);
\draw[thin] (0.300, 2.944) -- (1.827, 0.300);
\draw[thin,dotted] (0.300, 4.677) -- (2.827, 0.300);
\draw[thin] (0.300, 6.409) -- (3.827, 0.300);
\draw[thin,dotted] (0.300, 8.141) -- (4.827, 0.300);
\draw[thin] (0.300, 9.873) -- (5.827, 0.300);
\draw[thin,dotted] (0.300, 11.605) -- (6.827, 0.300);
\draw[thin] (1.245, 11.700) -- (7.827, 0.300);
\draw[thin,dotted] (2.245, 11.700) -- (8.827, 0.300);
\draw[thin] (3.245, 11.700) -- (9.827, 0.300);
\draw[thin,dotted] (4.245, 11.700) -- (10.827, 0.300);
\draw[thin] (5.245, 11.700) -- (11.700, 0.520);
\draw[thin,dotted] (6.245, 11.700) -- (11.700, 2.252);
\draw[thin] (7.245, 11.700) -- (11.700, 3.984);
\draw[thin,dotted] (8.245, 11.700) -- (11.700, 5.716);
\draw[thin] (9.245, 11.700) -- (11.700, 7.448);
\draw[thin,dotted] (10.245, 11.700) -- (11.700, 9.180);
\draw[thin] (11.245, 11.700) -- (11.700, 10.912);
\draw[thin,dotted] (0.300, 0.866) -- (11.700, 0.866);
\draw[thin] (0.300, 1.732) -- (11.700, 1.732);
\draw[thin,dotted] (0.300, 2.598) -- (11.700, 2.598);
\draw[thin] (0.300, 3.464) -- (11.700, 3.464);
\draw[thin,dotted] (0.300, 4.330) -- (11.700, 4.330);
\draw[thin] (0.300, 5.196) -- (11.700, 5.196);
\draw[thin,dotted] (0.300, 6.062) -- (11.700, 6.062);
\draw[thin] (0.300, 6.928) -- (11.700, 6.928);
\draw[thin,dotted] (0.300, 7.794) -- (11.700, 7.794);
\draw[thin] (0.300, 8.660) -- (11.700, 8.660);
\draw[thin,dotted] (0.300, 9.526) -- (11.700, 9.526);
\draw[thin] (0.300, 10.392) -- (11.700, 10.392);
\draw[thin,dotted] (0.300, 11.258) -- (11.700, 11.258);		
		\draw(6,-0.75) node{\footnotesize $(r,c) = (1,0)$};	
	\end{tikzpicture}
\\
	\begin{tikzpicture}[scale = 0.4]
		\draw[color=black, fill=cyan] (6.000, 8.660) -- (3.500, 4.330) -- (4.000, 3.464) -- (9.000, 3.464) -- (9.500, 4.330) -- (7.000, 8.660) -- cycle;
		\draw[pattern={north east lines}] (5.000, 8.660) -- (3.000, 5.196) -- (4.000, 3.464) -- (10.000, 3.464) -- (7.000, 8.660)-- cycle;
		\draw[thin,dotted] (11.173, 0.300) -- (11.700, 1.212);
\draw[thin] (10.173, 0.300) -- (11.700, 2.944);
\draw[thin,dotted] (9.173, 0.300) -- (11.700, 4.677);
\draw[thin] (8.173, 0.300) -- (11.700, 6.409);
\draw[thin,dotted] (7.173, 0.300) -- (11.700, 8.141);
\draw[thin] (6.173, 0.300) -- (11.700, 9.873);
\draw[thin,dotted] (5.173, 0.300) -- (11.700, 11.605);
\draw[thin] (4.173, 0.300) -- (10.755, 11.700);
\draw[thin,dotted] (3.173, 0.300) -- (9.755, 11.700);
\draw[thin] (2.173, 0.300) -- (8.755, 11.700);
\draw[thin,dotted] (1.173, 0.300) -- (7.755, 11.700);
\draw[thin] (0.300, 0.520) -- (6.755, 11.700);
\draw[thin,dotted] (0.300, 2.252) -- (5.755, 11.700);
\draw[thin] (0.300, 3.984) -- (4.755, 11.700);
\draw[thin,dotted] (0.300, 5.716) -- (3.755, 11.700);
\draw[thin] (0.300, 7.448) -- (2.755, 11.700);
\draw[thin,dotted] (0.300, 9.180) -- (1.755, 11.700);
\draw[thin] (0.300, 10.912) -- (0.755, 11.700);
\draw[thin,dotted] (0.300, 1.212) -- (0.827, 0.300);
\draw[thin] (0.300, 2.944) -- (1.827, 0.300);
\draw[thin,dotted] (0.300, 4.677) -- (2.827, 0.300);
\draw[thin] (0.300, 6.409) -- (3.827, 0.300);
\draw[thin,dotted] (0.300, 8.141) -- (4.827, 0.300);
\draw[thin] (0.300, 9.873) -- (5.827, 0.300);
\draw[thin,dotted] (0.300, 11.605) -- (6.827, 0.300);
\draw[thin] (1.245, 11.700) -- (7.827, 0.300);
\draw[thin,dotted] (2.245, 11.700) -- (8.827, 0.300);
\draw[thin] (3.245, 11.700) -- (9.827, 0.300);
\draw[thin,dotted] (4.245, 11.700) -- (10.827, 0.300);
\draw[thin] (5.245, 11.700) -- (11.700, 0.520);
\draw[thin,dotted] (6.245, 11.700) -- (11.700, 2.252);
\draw[thin] (7.245, 11.700) -- (11.700, 3.984);
\draw[thin,dotted] (8.245, 11.700) -- (11.700, 5.716);
\draw[thin] (9.245, 11.700) -- (11.700, 7.448);
\draw[thin,dotted] (10.245, 11.700) -- (11.700, 9.180);
\draw[thin] (11.245, 11.700) -- (11.700, 10.912);
\draw[thin,dotted] (0.300, 0.866) -- (11.700, 0.866);
\draw[thin] (0.300, 1.732) -- (11.700, 1.732);
\draw[thin,dotted] (0.300, 2.598) -- (11.700, 2.598);
\draw[thin] (0.300, 3.464) -- (11.700, 3.464);
\draw[thin,dotted] (0.300, 4.330) -- (11.700, 4.330);
\draw[thin] (0.300, 5.196) -- (11.700, 5.196);
\draw[thin,dotted] (0.300, 6.062) -- (11.700, 6.062);
\draw[thin] (0.300, 6.928) -- (11.700, 6.928);
\draw[thin,dotted] (0.300, 7.794) -- (11.700, 7.794);
\draw[thin] (0.300, 8.660) -- (11.700, 8.660);
\draw[thin,dotted] (0.300, 9.526) -- (11.700, 9.526);
\draw[thin] (0.300, 10.392) -- (11.700, 10.392);
\draw[thin,dotted] (0.300, 11.258) -- (11.700, 11.258);		
		\draw(6,-0.75) node{\footnotesize $(r,c) = (1,2)$};	
	\end{tikzpicture}
&
	\begin{tikzpicture}[scale = 0.4]
		\draw[color=black, fill=cyan] (5.500, 7.794) -- (3.000, 3.464) -- (3.500, 2.598) -- (8.500, 2.598) -- (9.000, 3.464) -- (6.500, 7.794) -- cycle;
		\draw[pattern={north east lines}] (5.000, 8.660) -- (2.000, 3.464) -- (3.000, 1.732) -- (9.000, 1.732) -- (10.000, 3.464) -- (7.000, 8.660) -- cycle;
		\draw[thin,dotted] (11.173, 0.300) -- (11.700, 1.212);
\draw[thin] (10.173, 0.300) -- (11.700, 2.944);
\draw[thin,dotted] (9.173, 0.300) -- (11.700, 4.677);
\draw[thin] (8.173, 0.300) -- (11.700, 6.409);
\draw[thin,dotted] (7.173, 0.300) -- (11.700, 8.141);
\draw[thin] (6.173, 0.300) -- (11.700, 9.873);
\draw[thin,dotted] (5.173, 0.300) -- (11.700, 11.605);
\draw[thin] (4.173, 0.300) -- (10.755, 11.700);
\draw[thin,dotted] (3.173, 0.300) -- (9.755, 11.700);
\draw[thin] (2.173, 0.300) -- (8.755, 11.700);
\draw[thin,dotted] (1.173, 0.300) -- (7.755, 11.700);
\draw[thin] (0.300, 0.520) -- (6.755, 11.700);
\draw[thin,dotted] (0.300, 2.252) -- (5.755, 11.700);
\draw[thin] (0.300, 3.984) -- (4.755, 11.700);
\draw[thin,dotted] (0.300, 5.716) -- (3.755, 11.700);
\draw[thin] (0.300, 7.448) -- (2.755, 11.700);
\draw[thin,dotted] (0.300, 9.180) -- (1.755, 11.700);
\draw[thin] (0.300, 10.912) -- (0.755, 11.700);
\draw[thin,dotted] (0.300, 1.212) -- (0.827, 0.300);
\draw[thin] (0.300, 2.944) -- (1.827, 0.300);
\draw[thin,dotted] (0.300, 4.677) -- (2.827, 0.300);
\draw[thin] (0.300, 6.409) -- (3.827, 0.300);
\draw[thin,dotted] (0.300, 8.141) -- (4.827, 0.300);
\draw[thin] (0.300, 9.873) -- (5.827, 0.300);
\draw[thin,dotted] (0.300, 11.605) -- (6.827, 0.300);
\draw[thin] (1.245, 11.700) -- (7.827, 0.300);
\draw[thin,dotted] (2.245, 11.700) -- (8.827, 0.300);
\draw[thin] (3.245, 11.700) -- (9.827, 0.300);
\draw[thin,dotted] (4.245, 11.700) -- (10.827, 0.300);
\draw[thin] (5.245, 11.700) -- (11.700, 0.520);
\draw[thin,dotted] (6.245, 11.700) -- (11.700, 2.252);
\draw[thin] (7.245, 11.700) -- (11.700, 3.984);
\draw[thin,dotted] (8.245, 11.700) -- (11.700, 5.716);
\draw[thin] (9.245, 11.700) -- (11.700, 7.448);
\draw[thin,dotted] (10.245, 11.700) -- (11.700, 9.180);
\draw[thin] (11.245, 11.700) -- (11.700, 10.912);
\draw[thin,dotted] (0.300, 0.866) -- (11.700, 0.866);
\draw[thin] (0.300, 1.732) -- (11.700, 1.732);
\draw[thin,dotted] (0.300, 2.598) -- (11.700, 2.598);
\draw[thin] (0.300, 3.464) -- (11.700, 3.464);
\draw[thin,dotted] (0.300, 4.330) -- (11.700, 4.330);
\draw[thin] (0.300, 5.196) -- (11.700, 5.196);
\draw[thin,dotted] (0.300, 6.062) -- (11.700, 6.062);
\draw[thin] (0.300, 6.928) -- (11.700, 6.928);
\draw[thin,dotted] (0.300, 7.794) -- (11.700, 7.794);
\draw[thin] (0.300, 8.660) -- (11.700, 8.660);
\draw[thin,dotted] (0.300, 9.526) -- (11.700, 9.526);
\draw[thin] (0.300, 10.392) -- (11.700, 10.392);
\draw[thin,dotted] (0.300, 11.258) -- (11.700, 11.258);
		\draw(6,-0.75) node{\footnotesize $(r,c) = (2,2)$};	
	\end{tikzpicture}
\end{tabular}
\end{center}
\caption{The blue shaded regions represent elements of $P_{r,c}(\mytikz{0.07}{\protect\triall},T, 2n+1)$, for the different values of $(r,c)$. The illustrations show how these are modified to reach the hatched regions, from which we can deduce the recursive expression in \eqref{eq:recDodd}.}
\label{fig:recDodd}
\end{figure}

\begin{equation}
\label{eq:recAprimEven}
\begin{split}
A'_{2n} 
&= -6 + \sum_{(r,c)\in I} |P_{r,c}(\mytikz{0.08}{\tridown},T, 2n)| \\
&= -6 + A'_{n} + A'_{n+1} + A'_{n+1} + A'_{n+1}. 
\end{split}
\end{equation}
See Figure~\ref{fig:recAprimEven} for a visualisation of the deduction of this recursion.

\begin{figure}[ht]
\centering
\begin{tabular}{cc}
	\begin{tikzpicture}[scale = 0.4]
		\draw[color=black, fill=cyan] (6.000, 3.464) -- (3.000, 8.660) -- (9.000, 8.660) -- cycle;
		\draw[pattern={north east lines}](6.000, 3.464) -- (3.000, 8.660) -- (9.000, 8.660) -- cycle;
		\draw[thin] (13.173, -1.432) -- (14.700, 1.212);
\draw[thin,dotted] (12.173, -1.432) -- (14.700, 2.944);
\draw[thin] (11.173, -1.432) -- (14.700, 4.677);
\draw[thin,dotted] (10.173, -1.432) -- (14.700, 6.409);
\draw[thin] (9.173, -1.432) -- (14.700, 8.141);
\draw[thin,dotted] (8.173, -1.432) -- (14.700, 9.873);
\draw[thin] (7.173, -1.432) -- (13.755, 9.968);
\draw[thin,dotted] (6.173, -1.432) -- (12.755, 9.968);
\draw[thin] (5.173, -1.432) -- (11.755, 9.968);
\draw[thin,dotted] (4.173, -1.432) -- (10.755, 9.968);
\draw[thin] (3.173, -1.432) -- (9.755, 9.968);
\draw[thin,dotted] (2.173, -1.432) -- (8.755, 9.968);
\draw[thin] (1.300, -1.212) -- (7.755, 9.968);
\draw[thin,dotted] (1.300, 0.520) -- (6.755, 9.968);
\draw[thin] (1.300, 2.252) -- (5.755, 9.968);
\draw[thin,dotted] (1.300, 3.984) -- (4.755, 9.968);
\draw[thin] (1.300, 5.716) -- (3.755, 9.968);
\draw[thin,dotted] (1.300, 7.448) -- (2.755, 9.968);
\draw[thin] (1.300, 9.180) -- (1.755, 9.968);
\draw[thin,dotted] (1.300, -0.520) -- (1.827, -1.432);
\draw[thin] (1.300, 1.212) -- (2.827, -1.432);
\draw[thin,dotted] (1.300, 2.944) -- (3.827, -1.432);
\draw[thin] (1.300, 4.677) -- (4.827, -1.432);
\draw[thin,dotted] (1.300, 6.409) -- (5.827, -1.432);
\draw[thin] (1.300, 8.141) -- (6.827, -1.432);
\draw[thin,dotted] (1.300, 9.873) -- (7.827, -1.432);
\draw[thin] (2.245, 9.968) -- (8.827, -1.432);
\draw[thin,dotted] (3.245, 9.968) -- (9.827, -1.432);
\draw[thin] (4.245, 9.968) -- (10.827, -1.432);
\draw[thin,dotted] (5.245, 9.968) -- (11.827, -1.432);
\draw[thin] (6.245, 9.968) -- (12.827, -1.432);
\draw[thin,dotted] (7.245, 9.968) -- (13.827, -1.432);
\draw[thin] (8.245, 9.968) -- (14.700, -1.212);
\draw[thin,dotted] (9.245, 9.968) -- (14.700, 0.520);
\draw[thin] (10.245, 9.968) -- (14.700, 2.252);
\draw[thin,dotted] (11.245, 9.968) -- (14.700, 3.984);
\draw[thin] (12.245, 9.968) -- (14.700, 5.716);
\draw[thin,dotted] (13.245, 9.968) -- (14.700, 7.448);
\draw[thin] (14.245, 9.968) -- (14.700, 9.180);
\draw[thin,dotted] (1.300, -0.866) -- (14.700, -0.866);
\draw[thin] (1.300, 0.000) -- (14.700, 0.000);
\draw[thin,dotted] (1.300, 0.866) -- (14.700, 0.866);
\draw[thin] (1.300, 1.732) -- (14.700, 1.732);
\draw[thin,dotted] (1.300, 2.598) -- (14.700, 2.598);
\draw[thin] (1.300, 3.464) -- (14.700, 3.464);
\draw[thin,dotted] (1.300, 4.330) -- (14.700, 4.330);
\draw[thin] (1.300, 5.196) -- (14.700, 5.196);
\draw[thin,dotted] (1.300, 6.062) -- (14.700, 6.062);
\draw[thin] (1.300, 6.928) -- (14.700, 6.928);
\draw[thin,dotted] (1.300, 7.794) -- (14.700, 7.794);
\draw[thin] (1.300, 8.660) -- (14.700, 8.660);
\draw[thin,dotted] (1.300, 9.526) -- (14.700, 9.526);
%\draw(1.300, -3.500) rectangle (14.700, 9.968);		
		\draw(8, -2.25) node{\footnotesize $(r,c) = (0,0)$};	
	\end{tikzpicture}
&
	\begin{tikzpicture}[scale = 0.4]
		\draw[color=black, fill=cyan] (7.000, 3.464) -- (4.000, 8.660) -- (10.000, 8.660) -- cycle;
		\draw[pattern={north east lines}] (3.000, 8.660) -- (7.000, 1.732) -- (11.000, 8.660) -- cycle;
		\draw[thin] (13.173, -1.432) -- (14.700, 1.212);
\draw[thin,dotted] (12.173, -1.432) -- (14.700, 2.944);
\draw[thin] (11.173, -1.432) -- (14.700, 4.677);
\draw[thin,dotted] (10.173, -1.432) -- (14.700, 6.409);
\draw[thin] (9.173, -1.432) -- (14.700, 8.141);
\draw[thin,dotted] (8.173, -1.432) -- (14.700, 9.873);
\draw[thin] (7.173, -1.432) -- (13.755, 9.968);
\draw[thin,dotted] (6.173, -1.432) -- (12.755, 9.968);
\draw[thin] (5.173, -1.432) -- (11.755, 9.968);
\draw[thin,dotted] (4.173, -1.432) -- (10.755, 9.968);
\draw[thin] (3.173, -1.432) -- (9.755, 9.968);
\draw[thin,dotted] (2.173, -1.432) -- (8.755, 9.968);
\draw[thin] (1.300, -1.212) -- (7.755, 9.968);
\draw[thin,dotted] (1.300, 0.520) -- (6.755, 9.968);
\draw[thin] (1.300, 2.252) -- (5.755, 9.968);
\draw[thin,dotted] (1.300, 3.984) -- (4.755, 9.968);
\draw[thin] (1.300, 5.716) -- (3.755, 9.968);
\draw[thin,dotted] (1.300, 7.448) -- (2.755, 9.968);
\draw[thin] (1.300, 9.180) -- (1.755, 9.968);
\draw[thin,dotted] (1.300, -0.520) -- (1.827, -1.432);
\draw[thin] (1.300, 1.212) -- (2.827, -1.432);
\draw[thin,dotted] (1.300, 2.944) -- (3.827, -1.432);
\draw[thin] (1.300, 4.677) -- (4.827, -1.432);
\draw[thin,dotted] (1.300, 6.409) -- (5.827, -1.432);
\draw[thin] (1.300, 8.141) -- (6.827, -1.432);
\draw[thin,dotted] (1.300, 9.873) -- (7.827, -1.432);
\draw[thin] (2.245, 9.968) -- (8.827, -1.432);
\draw[thin,dotted] (3.245, 9.968) -- (9.827, -1.432);
\draw[thin] (4.245, 9.968) -- (10.827, -1.432);
\draw[thin,dotted] (5.245, 9.968) -- (11.827, -1.432);
\draw[thin] (6.245, 9.968) -- (12.827, -1.432);
\draw[thin,dotted] (7.245, 9.968) -- (13.827, -1.432);
\draw[thin] (8.245, 9.968) -- (14.700, -1.212);
\draw[thin,dotted] (9.245, 9.968) -- (14.700, 0.520);
\draw[thin] (10.245, 9.968) -- (14.700, 2.252);
\draw[thin,dotted] (11.245, 9.968) -- (14.700, 3.984);
\draw[thin] (12.245, 9.968) -- (14.700, 5.716);
\draw[thin,dotted] (13.245, 9.968) -- (14.700, 7.448);
\draw[thin] (14.245, 9.968) -- (14.700, 9.180);
\draw[thin,dotted] (1.300, -0.866) -- (14.700, -0.866);
\draw[thin] (1.300, 0.000) -- (14.700, 0.000);
\draw[thin,dotted] (1.300, 0.866) -- (14.700, 0.866);
\draw[thin] (1.300, 1.732) -- (14.700, 1.732);
\draw[thin,dotted] (1.300, 2.598) -- (14.700, 2.598);
\draw[thin] (1.300, 3.464) -- (14.700, 3.464);
\draw[thin,dotted] (1.300, 4.330) -- (14.700, 4.330);
\draw[thin] (1.300, 5.196) -- (14.700, 5.196);
\draw[thin,dotted] (1.300, 6.062) -- (14.700, 6.062);
\draw[thin] (1.300, 6.928) -- (14.700, 6.928);
\draw[thin,dotted] (1.300, 7.794) -- (14.700, 7.794);
\draw[thin] (1.300, 8.660) -- (14.700, 8.660);
\draw[thin,dotted] (1.300, 9.526) -- (14.700, 9.526);
%\draw(1.300, -3.500) rectangle (14.700, 9.968);		
		\draw(8, -2.25) node{\footnotesize $(r,c) = (1,0)$};	
	\end{tikzpicture}
\\
	\begin{tikzpicture}[scale = 0.4]
		\draw[color=black, fill=cyan] (6.500, 2.598) -- (3.500, 7.794) -- (9.500, 7.794) -- cycle;
		\draw[pattern={north east lines}] (3.000, 8.660) -- (7.000, 1.732) -- (11.000, 8.660) -- cycle;
		\draw[thin] (13.173, -1.432) -- (14.700, 1.212);
\draw[thin,dotted] (12.173, -1.432) -- (14.700, 2.944);
\draw[thin] (11.173, -1.432) -- (14.700, 4.677);
\draw[thin,dotted] (10.173, -1.432) -- (14.700, 6.409);
\draw[thin] (9.173, -1.432) -- (14.700, 8.141);
\draw[thin,dotted] (8.173, -1.432) -- (14.700, 9.873);
\draw[thin] (7.173, -1.432) -- (13.755, 9.968);
\draw[thin,dotted] (6.173, -1.432) -- (12.755, 9.968);
\draw[thin] (5.173, -1.432) -- (11.755, 9.968);
\draw[thin,dotted] (4.173, -1.432) -- (10.755, 9.968);
\draw[thin] (3.173, -1.432) -- (9.755, 9.968);
\draw[thin,dotted] (2.173, -1.432) -- (8.755, 9.968);
\draw[thin] (1.300, -1.212) -- (7.755, 9.968);
\draw[thin,dotted] (1.300, 0.520) -- (6.755, 9.968);
\draw[thin] (1.300, 2.252) -- (5.755, 9.968);
\draw[thin,dotted] (1.300, 3.984) -- (4.755, 9.968);
\draw[thin] (1.300, 5.716) -- (3.755, 9.968);
\draw[thin,dotted] (1.300, 7.448) -- (2.755, 9.968);
\draw[thin] (1.300, 9.180) -- (1.755, 9.968);
\draw[thin,dotted] (1.300, -0.520) -- (1.827, -1.432);
\draw[thin] (1.300, 1.212) -- (2.827, -1.432);
\draw[thin,dotted] (1.300, 2.944) -- (3.827, -1.432);
\draw[thin] (1.300, 4.677) -- (4.827, -1.432);
\draw[thin,dotted] (1.300, 6.409) -- (5.827, -1.432);
\draw[thin] (1.300, 8.141) -- (6.827, -1.432);
\draw[thin,dotted] (1.300, 9.873) -- (7.827, -1.432);
\draw[thin] (2.245, 9.968) -- (8.827, -1.432);
\draw[thin,dotted] (3.245, 9.968) -- (9.827, -1.432);
\draw[thin] (4.245, 9.968) -- (10.827, -1.432);
\draw[thin,dotted] (5.245, 9.968) -- (11.827, -1.432);
\draw[thin] (6.245, 9.968) -- (12.827, -1.432);
\draw[thin,dotted] (7.245, 9.968) -- (13.827, -1.432);
\draw[thin] (8.245, 9.968) -- (14.700, -1.212);
\draw[thin,dotted] (9.245, 9.968) -- (14.700, 0.520);
\draw[thin] (10.245, 9.968) -- (14.700, 2.252);
\draw[thin,dotted] (11.245, 9.968) -- (14.700, 3.984);
\draw[thin] (12.245, 9.968) -- (14.700, 5.716);
\draw[thin,dotted] (13.245, 9.968) -- (14.700, 7.448);
\draw[thin] (14.245, 9.968) -- (14.700, 9.180);
\draw[thin,dotted] (1.300, -0.866) -- (14.700, -0.866);
\draw[thin] (1.300, 0.000) -- (14.700, 0.000);
\draw[thin,dotted] (1.300, 0.866) -- (14.700, 0.866);
\draw[thin] (1.300, 1.732) -- (14.700, 1.732);
\draw[thin,dotted] (1.300, 2.598) -- (14.700, 2.598);
\draw[thin] (1.300, 3.464) -- (14.700, 3.464);
\draw[thin,dotted] (1.300, 4.330) -- (14.700, 4.330);
\draw[thin] (1.300, 5.196) -- (14.700, 5.196);
\draw[thin,dotted] (1.300, 6.062) -- (14.700, 6.062);
\draw[thin] (1.300, 6.928) -- (14.700, 6.928);
\draw[thin,dotted] (1.300, 7.794) -- (14.700, 7.794);
\draw[thin] (1.300, 8.660) -- (14.700, 8.660);
\draw[thin,dotted] (1.300, 9.526) -- (14.700, 9.526);
%\draw(1.300, -3.500) rectangle (14.700, 9.968);		
		\draw(8, -2.25) node{\footnotesize $(r,c) = (1,2)$};	
	\end{tikzpicture}
&
	\begin{tikzpicture}[scale = 0.4]
		\draw[color=black, fill=cyan] (7.500, 2.598) -- (4.500, 7.794) -- (10.500, 7.794) -- cycle;
		\draw[pattern={north east lines}] (3.000, 8.660) -- (7.000, 1.732) -- (11.000, 8.660) -- cycle;
		\draw[thin] (13.173, -1.432) -- (14.700, 1.212);
\draw[thin,dotted] (12.173, -1.432) -- (14.700, 2.944);
\draw[thin] (11.173, -1.432) -- (14.700, 4.677);
\draw[thin,dotted] (10.173, -1.432) -- (14.700, 6.409);
\draw[thin] (9.173, -1.432) -- (14.700, 8.141);
\draw[thin,dotted] (8.173, -1.432) -- (14.700, 9.873);
\draw[thin] (7.173, -1.432) -- (13.755, 9.968);
\draw[thin,dotted] (6.173, -1.432) -- (12.755, 9.968);
\draw[thin] (5.173, -1.432) -- (11.755, 9.968);
\draw[thin,dotted] (4.173, -1.432) -- (10.755, 9.968);
\draw[thin] (3.173, -1.432) -- (9.755, 9.968);
\draw[thin,dotted] (2.173, -1.432) -- (8.755, 9.968);
\draw[thin] (1.300, -1.212) -- (7.755, 9.968);
\draw[thin,dotted] (1.300, 0.520) -- (6.755, 9.968);
\draw[thin] (1.300, 2.252) -- (5.755, 9.968);
\draw[thin,dotted] (1.300, 3.984) -- (4.755, 9.968);
\draw[thin] (1.300, 5.716) -- (3.755, 9.968);
\draw[thin,dotted] (1.300, 7.448) -- (2.755, 9.968);
\draw[thin] (1.300, 9.180) -- (1.755, 9.968);
\draw[thin,dotted] (1.300, -0.520) -- (1.827, -1.432);
\draw[thin] (1.300, 1.212) -- (2.827, -1.432);
\draw[thin,dotted] (1.300, 2.944) -- (3.827, -1.432);
\draw[thin] (1.300, 4.677) -- (4.827, -1.432);
\draw[thin,dotted] (1.300, 6.409) -- (5.827, -1.432);
\draw[thin] (1.300, 8.141) -- (6.827, -1.432);
\draw[thin,dotted] (1.300, 9.873) -- (7.827, -1.432);
\draw[thin] (2.245, 9.968) -- (8.827, -1.432);
\draw[thin,dotted] (3.245, 9.968) -- (9.827, -1.432);
\draw[thin] (4.245, 9.968) -- (10.827, -1.432);
\draw[thin,dotted] (5.245, 9.968) -- (11.827, -1.432);
\draw[thin] (6.245, 9.968) -- (12.827, -1.432);
\draw[thin,dotted] (7.245, 9.968) -- (13.827, -1.432);
\draw[thin] (8.245, 9.968) -- (14.700, -1.212);
\draw[thin,dotted] (9.245, 9.968) -- (14.700, 0.520);
\draw[thin] (10.245, 9.968) -- (14.700, 2.252);
\draw[thin,dotted] (11.245, 9.968) -- (14.700, 3.984);
\draw[thin] (12.245, 9.968) -- (14.700, 5.716);
\draw[thin,dotted] (13.245, 9.968) -- (14.700, 7.448);
\draw[thin] (14.245, 9.968) -- (14.700, 9.180);
\draw[thin,dotted] (1.300, -0.866) -- (14.700, -0.866);
\draw[thin] (1.300, 0.000) -- (14.700, 0.000);
\draw[thin,dotted] (1.300, 0.866) -- (14.700, 0.866);
\draw[thin] (1.300, 1.732) -- (14.700, 1.732);
\draw[thin,dotted] (1.300, 2.598) -- (14.700, 2.598);
\draw[thin] (1.300, 3.464) -- (14.700, 3.464);
\draw[thin,dotted] (1.300, 4.330) -- (14.700, 4.330);
\draw[thin] (1.300, 5.196) -- (14.700, 5.196);
\draw[thin,dotted] (1.300, 6.062) -- (14.700, 6.062);
\draw[thin] (1.300, 6.928) -- (14.700, 6.928);
\draw[thin,dotted] (1.300, 7.794) -- (14.700, 7.794);
\draw[thin] (1.300, 8.660) -- (14.700, 8.660);
\draw[thin,dotted] (1.300, 9.526) -- (14.700, 9.526);
%\draw(1.300, -3.500) rectangle (14.700, 9.968);
		\draw(8, -2.25) node{\footnotesize $(r,c) = (2,2)$};	
	\end{tikzpicture}
\end{tabular}
\caption{The blue shaded regions represent elements of $P_{r,c}(\mytikz{0.07}{\protect\tridown},T, 2n)$, for the different values of $(r,c)$. The illustrations show how these are modified to reach the hatched regions, from which we can deduce the recursive expression in \eqref{eq:recAprimEven}.}
\label{fig:recAprimEven}
\end{figure}

\begin{align}
\label{eq:recAprimOdd}
\begin{split}
A'_{2n+1} 
&= -6 + \sum_{(r,c)\in I} |P_{r,c}(\mytikz{0.08}{\tridown},T, 2n+1)| \\
&= -6 + A'_{n+1} + A'_{n+1} + A'_{n+1} + A'_{n+2}.
\end{split}
\end{align}
See Figure~\ref{fig:recAprimOdd} for a visualisation of the deduction of this recursion.

\begin{figure}[ht]
\centering
\begin{tabular}{cc}
	\begin{tikzpicture}[scale = 0.4]
		\draw[color=black, fill=cyan] (6.500, 2.598) -- (3.000, 8.660) -- (10.000, 8.660) -- cycle;
		\draw[pattern={north east lines}] (3.000, 8.660) -- (7.000, 1.732) -- (11.000, 8.660) -- cycle;
		\draw[thin] (13.173, -1.432) -- (14.700, 1.212);
\draw[thin,dotted] (12.173, -1.432) -- (14.700, 2.944);
\draw[thin] (11.173, -1.432) -- (14.700, 4.677);
\draw[thin,dotted] (10.173, -1.432) -- (14.700, 6.409);
\draw[thin] (9.173, -1.432) -- (14.700, 8.141);
\draw[thin,dotted] (8.173, -1.432) -- (14.700, 9.873);
\draw[thin] (7.173, -1.432) -- (13.755, 9.968);
\draw[thin,dotted] (6.173, -1.432) -- (12.755, 9.968);
\draw[thin] (5.173, -1.432) -- (11.755, 9.968);
\draw[thin,dotted] (4.173, -1.432) -- (10.755, 9.968);
\draw[thin] (3.173, -1.432) -- (9.755, 9.968);
\draw[thin,dotted] (2.173, -1.432) -- (8.755, 9.968);
\draw[thin] (1.300, -1.212) -- (7.755, 9.968);
\draw[thin,dotted] (1.300, 0.520) -- (6.755, 9.968);
\draw[thin] (1.300, 2.252) -- (5.755, 9.968);
\draw[thin,dotted] (1.300, 3.984) -- (4.755, 9.968);
\draw[thin] (1.300, 5.716) -- (3.755, 9.968);
\draw[thin,dotted] (1.300, 7.448) -- (2.755, 9.968);
\draw[thin] (1.300, 9.180) -- (1.755, 9.968);
\draw[thin,dotted] (1.300, -0.520) -- (1.827, -1.432);
\draw[thin] (1.300, 1.212) -- (2.827, -1.432);
\draw[thin,dotted] (1.300, 2.944) -- (3.827, -1.432);
\draw[thin] (1.300, 4.677) -- (4.827, -1.432);
\draw[thin,dotted] (1.300, 6.409) -- (5.827, -1.432);
\draw[thin] (1.300, 8.141) -- (6.827, -1.432);
\draw[thin,dotted] (1.300, 9.873) -- (7.827, -1.432);
\draw[thin] (2.245, 9.968) -- (8.827, -1.432);
\draw[thin,dotted] (3.245, 9.968) -- (9.827, -1.432);
\draw[thin] (4.245, 9.968) -- (10.827, -1.432);
\draw[thin,dotted] (5.245, 9.968) -- (11.827, -1.432);
\draw[thin] (6.245, 9.968) -- (12.827, -1.432);
\draw[thin,dotted] (7.245, 9.968) -- (13.827, -1.432);
\draw[thin] (8.245, 9.968) -- (14.700, -1.212);
\draw[thin,dotted] (9.245, 9.968) -- (14.700, 0.520);
\draw[thin] (10.245, 9.968) -- (14.700, 2.252);
\draw[thin,dotted] (11.245, 9.968) -- (14.700, 3.984);
\draw[thin] (12.245, 9.968) -- (14.700, 5.716);
\draw[thin,dotted] (13.245, 9.968) -- (14.700, 7.448);
\draw[thin] (14.245, 9.968) -- (14.700, 9.180);
\draw[thin,dotted] (1.300, -0.866) -- (14.700, -0.866);
\draw[thin] (1.300, 0.000) -- (14.700, 0.000);
\draw[thin,dotted] (1.300, 0.866) -- (14.700, 0.866);
\draw[thin] (1.300, 1.732) -- (14.700, 1.732);
\draw[thin,dotted] (1.300, 2.598) -- (14.700, 2.598);
\draw[thin] (1.300, 3.464) -- (14.700, 3.464);
\draw[thin,dotted] (1.300, 4.330) -- (14.700, 4.330);
\draw[thin] (1.300, 5.196) -- (14.700, 5.196);
\draw[thin,dotted] (1.300, 6.062) -- (14.700, 6.062);
\draw[thin] (1.300, 6.928) -- (14.700, 6.928);
\draw[thin,dotted] (1.300, 7.794) -- (14.700, 7.794);
\draw[thin] (1.300, 8.660) -- (14.700, 8.660);
\draw[thin,dotted] (1.300, 9.526) -- (14.700, 9.526);
%\draw(1.300, -3.500) rectangle (14.700, 9.968);		
		\draw(8, -2.25) node{\footnotesize $(r,c) = (0,0)$};	
	\end{tikzpicture}
&
	\begin{tikzpicture}[scale = 0.4]
		\draw[color=black, fill=cyan] (7.500, 2.598) -- (4.000, 8.660) -- (11.000, 8.660) -- cycle;
		\draw[pattern={north east lines}] (3.000, 8.660) -- (7.000, 1.732) -- (11.000, 8.660) -- cycle;
		\draw[thin] (13.173, -1.432) -- (14.700, 1.212);
\draw[thin,dotted] (12.173, -1.432) -- (14.700, 2.944);
\draw[thin] (11.173, -1.432) -- (14.700, 4.677);
\draw[thin,dotted] (10.173, -1.432) -- (14.700, 6.409);
\draw[thin] (9.173, -1.432) -- (14.700, 8.141);
\draw[thin,dotted] (8.173, -1.432) -- (14.700, 9.873);
\draw[thin] (7.173, -1.432) -- (13.755, 9.968);
\draw[thin,dotted] (6.173, -1.432) -- (12.755, 9.968);
\draw[thin] (5.173, -1.432) -- (11.755, 9.968);
\draw[thin,dotted] (4.173, -1.432) -- (10.755, 9.968);
\draw[thin] (3.173, -1.432) -- (9.755, 9.968);
\draw[thin,dotted] (2.173, -1.432) -- (8.755, 9.968);
\draw[thin] (1.300, -1.212) -- (7.755, 9.968);
\draw[thin,dotted] (1.300, 0.520) -- (6.755, 9.968);
\draw[thin] (1.300, 2.252) -- (5.755, 9.968);
\draw[thin,dotted] (1.300, 3.984) -- (4.755, 9.968);
\draw[thin] (1.300, 5.716) -- (3.755, 9.968);
\draw[thin,dotted] (1.300, 7.448) -- (2.755, 9.968);
\draw[thin] (1.300, 9.180) -- (1.755, 9.968);
\draw[thin,dotted] (1.300, -0.520) -- (1.827, -1.432);
\draw[thin] (1.300, 1.212) -- (2.827, -1.432);
\draw[thin,dotted] (1.300, 2.944) -- (3.827, -1.432);
\draw[thin] (1.300, 4.677) -- (4.827, -1.432);
\draw[thin,dotted] (1.300, 6.409) -- (5.827, -1.432);
\draw[thin] (1.300, 8.141) -- (6.827, -1.432);
\draw[thin,dotted] (1.300, 9.873) -- (7.827, -1.432);
\draw[thin] (2.245, 9.968) -- (8.827, -1.432);
\draw[thin,dotted] (3.245, 9.968) -- (9.827, -1.432);
\draw[thin] (4.245, 9.968) -- (10.827, -1.432);
\draw[thin,dotted] (5.245, 9.968) -- (11.827, -1.432);
\draw[thin] (6.245, 9.968) -- (12.827, -1.432);
\draw[thin,dotted] (7.245, 9.968) -- (13.827, -1.432);
\draw[thin] (8.245, 9.968) -- (14.700, -1.212);
\draw[thin,dotted] (9.245, 9.968) -- (14.700, 0.520);
\draw[thin] (10.245, 9.968) -- (14.700, 2.252);
\draw[thin,dotted] (11.245, 9.968) -- (14.700, 3.984);
\draw[thin] (12.245, 9.968) -- (14.700, 5.716);
\draw[thin,dotted] (13.245, 9.968) -- (14.700, 7.448);
\draw[thin] (14.245, 9.968) -- (14.700, 9.180);
\draw[thin,dotted] (1.300, -0.866) -- (14.700, -0.866);
\draw[thin] (1.300, 0.000) -- (14.700, 0.000);
\draw[thin,dotted] (1.300, 0.866) -- (14.700, 0.866);
\draw[thin] (1.300, 1.732) -- (14.700, 1.732);
\draw[thin,dotted] (1.300, 2.598) -- (14.700, 2.598);
\draw[thin] (1.300, 3.464) -- (14.700, 3.464);
\draw[thin,dotted] (1.300, 4.330) -- (14.700, 4.330);
\draw[thin] (1.300, 5.196) -- (14.700, 5.196);
\draw[thin,dotted] (1.300, 6.062) -- (14.700, 6.062);
\draw[thin] (1.300, 6.928) -- (14.700, 6.928);
\draw[thin,dotted] (1.300, 7.794) -- (14.700, 7.794);
\draw[thin] (1.300, 8.660) -- (14.700, 8.660);
\draw[thin,dotted] (1.300, 9.526) -- (14.700, 9.526);
%\draw(1.300, -3.500) rectangle (14.700, 9.968);		
		\draw(8, -2.25) node{\footnotesize $(r,c) = (0,2)$};	
	\end{tikzpicture}
\\
	\begin{tikzpicture}[scale = 0.4]
		\draw[color=black, fill=cyan] (7.000, 1.732) -- (3.500, 7.794) -- (10.500, 7.794) -- cycle;
		\draw[pattern={north east lines}] (3.000, 8.660) -- (7.000, 1.732) -- (11.000, 8.660) -- cycle;
		\draw[thin] (13.173, -1.432) -- (14.700, 1.212);
\draw[thin,dotted] (12.173, -1.432) -- (14.700, 2.944);
\draw[thin] (11.173, -1.432) -- (14.700, 4.677);
\draw[thin,dotted] (10.173, -1.432) -- (14.700, 6.409);
\draw[thin] (9.173, -1.432) -- (14.700, 8.141);
\draw[thin,dotted] (8.173, -1.432) -- (14.700, 9.873);
\draw[thin] (7.173, -1.432) -- (13.755, 9.968);
\draw[thin,dotted] (6.173, -1.432) -- (12.755, 9.968);
\draw[thin] (5.173, -1.432) -- (11.755, 9.968);
\draw[thin,dotted] (4.173, -1.432) -- (10.755, 9.968);
\draw[thin] (3.173, -1.432) -- (9.755, 9.968);
\draw[thin,dotted] (2.173, -1.432) -- (8.755, 9.968);
\draw[thin] (1.300, -1.212) -- (7.755, 9.968);
\draw[thin,dotted] (1.300, 0.520) -- (6.755, 9.968);
\draw[thin] (1.300, 2.252) -- (5.755, 9.968);
\draw[thin,dotted] (1.300, 3.984) -- (4.755, 9.968);
\draw[thin] (1.300, 5.716) -- (3.755, 9.968);
\draw[thin,dotted] (1.300, 7.448) -- (2.755, 9.968);
\draw[thin] (1.300, 9.180) -- (1.755, 9.968);
\draw[thin,dotted] (1.300, -0.520) -- (1.827, -1.432);
\draw[thin] (1.300, 1.212) -- (2.827, -1.432);
\draw[thin,dotted] (1.300, 2.944) -- (3.827, -1.432);
\draw[thin] (1.300, 4.677) -- (4.827, -1.432);
\draw[thin,dotted] (1.300, 6.409) -- (5.827, -1.432);
\draw[thin] (1.300, 8.141) -- (6.827, -1.432);
\draw[thin,dotted] (1.300, 9.873) -- (7.827, -1.432);
\draw[thin] (2.245, 9.968) -- (8.827, -1.432);
\draw[thin,dotted] (3.245, 9.968) -- (9.827, -1.432);
\draw[thin] (4.245, 9.968) -- (10.827, -1.432);
\draw[thin,dotted] (5.245, 9.968) -- (11.827, -1.432);
\draw[thin] (6.245, 9.968) -- (12.827, -1.432);
\draw[thin,dotted] (7.245, 9.968) -- (13.827, -1.432);
\draw[thin] (8.245, 9.968) -- (14.700, -1.212);
\draw[thin,dotted] (9.245, 9.968) -- (14.700, 0.520);
\draw[thin] (10.245, 9.968) -- (14.700, 2.252);
\draw[thin,dotted] (11.245, 9.968) -- (14.700, 3.984);
\draw[thin] (12.245, 9.968) -- (14.700, 5.716);
\draw[thin,dotted] (13.245, 9.968) -- (14.700, 7.448);
\draw[thin] (14.245, 9.968) -- (14.700, 9.180);
\draw[thin,dotted] (1.300, -0.866) -- (14.700, -0.866);
\draw[thin] (1.300, 0.000) -- (14.700, 0.000);
\draw[thin,dotted] (1.300, 0.866) -- (14.700, 0.866);
\draw[thin] (1.300, 1.732) -- (14.700, 1.732);
\draw[thin,dotted] (1.300, 2.598) -- (14.700, 2.598);
\draw[thin] (1.300, 3.464) -- (14.700, 3.464);
\draw[thin,dotted] (1.300, 4.330) -- (14.700, 4.330);
\draw[thin] (1.300, 5.196) -- (14.700, 5.196);
\draw[thin,dotted] (1.300, 6.062) -- (14.700, 6.062);
\draw[thin] (1.300, 6.928) -- (14.700, 6.928);
\draw[thin,dotted] (1.300, 7.794) -- (14.700, 7.794);
\draw[thin] (1.300, 8.660) -- (14.700, 8.660);
\draw[thin,dotted] (1.300, 9.526) -- (14.700, 9.526);
%\draw(1.300, -3.500) rectangle (14.700, 9.968);		
		\draw(8, -2.25) node{\footnotesize $(r,c) = (1,0)$};	
	\end{tikzpicture}
&
	\begin{tikzpicture}[scale = 0.4]
		\draw[color=black, fill=cyan] (8.000, 1.732) -- (4.500, 7.794) -- (11.500, 7.794) -- cycle;
		\draw[pattern={north east lines}] (3.000, 8.660) -- (8.000, 0.000) -- (13.000, 8.660) -- cycle;
		\draw[thin] (13.173, -1.432) -- (14.700, 1.212);
\draw[thin,dotted] (12.173, -1.432) -- (14.700, 2.944);
\draw[thin] (11.173, -1.432) -- (14.700, 4.677);
\draw[thin,dotted] (10.173, -1.432) -- (14.700, 6.409);
\draw[thin] (9.173, -1.432) -- (14.700, 8.141);
\draw[thin,dotted] (8.173, -1.432) -- (14.700, 9.873);
\draw[thin] (7.173, -1.432) -- (13.755, 9.968);
\draw[thin,dotted] (6.173, -1.432) -- (12.755, 9.968);
\draw[thin] (5.173, -1.432) -- (11.755, 9.968);
\draw[thin,dotted] (4.173, -1.432) -- (10.755, 9.968);
\draw[thin] (3.173, -1.432) -- (9.755, 9.968);
\draw[thin,dotted] (2.173, -1.432) -- (8.755, 9.968);
\draw[thin] (1.300, -1.212) -- (7.755, 9.968);
\draw[thin,dotted] (1.300, 0.520) -- (6.755, 9.968);
\draw[thin] (1.300, 2.252) -- (5.755, 9.968);
\draw[thin,dotted] (1.300, 3.984) -- (4.755, 9.968);
\draw[thin] (1.300, 5.716) -- (3.755, 9.968);
\draw[thin,dotted] (1.300, 7.448) -- (2.755, 9.968);
\draw[thin] (1.300, 9.180) -- (1.755, 9.968);
\draw[thin,dotted] (1.300, -0.520) -- (1.827, -1.432);
\draw[thin] (1.300, 1.212) -- (2.827, -1.432);
\draw[thin,dotted] (1.300, 2.944) -- (3.827, -1.432);
\draw[thin] (1.300, 4.677) -- (4.827, -1.432);
\draw[thin,dotted] (1.300, 6.409) -- (5.827, -1.432);
\draw[thin] (1.300, 8.141) -- (6.827, -1.432);
\draw[thin,dotted] (1.300, 9.873) -- (7.827, -1.432);
\draw[thin] (2.245, 9.968) -- (8.827, -1.432);
\draw[thin,dotted] (3.245, 9.968) -- (9.827, -1.432);
\draw[thin] (4.245, 9.968) -- (10.827, -1.432);
\draw[thin,dotted] (5.245, 9.968) -- (11.827, -1.432);
\draw[thin] (6.245, 9.968) -- (12.827, -1.432);
\draw[thin,dotted] (7.245, 9.968) -- (13.827, -1.432);
\draw[thin] (8.245, 9.968) -- (14.700, -1.212);
\draw[thin,dotted] (9.245, 9.968) -- (14.700, 0.520);
\draw[thin] (10.245, 9.968) -- (14.700, 2.252);
\draw[thin,dotted] (11.245, 9.968) -- (14.700, 3.984);
\draw[thin] (12.245, 9.968) -- (14.700, 5.716);
\draw[thin,dotted] (13.245, 9.968) -- (14.700, 7.448);
\draw[thin] (14.245, 9.968) -- (14.700, 9.180);
\draw[thin,dotted] (1.300, -0.866) -- (14.700, -0.866);
\draw[thin] (1.300, 0.000) -- (14.700, 0.000);
\draw[thin,dotted] (1.300, 0.866) -- (14.700, 0.866);
\draw[thin] (1.300, 1.732) -- (14.700, 1.732);
\draw[thin,dotted] (1.300, 2.598) -- (14.700, 2.598);
\draw[thin] (1.300, 3.464) -- (14.700, 3.464);
\draw[thin,dotted] (1.300, 4.330) -- (14.700, 4.330);
\draw[thin] (1.300, 5.196) -- (14.700, 5.196);
\draw[thin,dotted] (1.300, 6.062) -- (14.700, 6.062);
\draw[thin] (1.300, 6.928) -- (14.700, 6.928);
\draw[thin,dotted] (1.300, 7.794) -- (14.700, 7.794);
\draw[thin] (1.300, 8.660) -- (14.700, 8.660);
\draw[thin,dotted] (1.300, 9.526) -- (14.700, 9.526);
%\draw(1.300, -3.500) rectangle (14.700, 9.968);
		\draw(8, -2.25) node{\footnotesize $(r,c) = (1,2)$};	
	\end{tikzpicture}
\end{tabular}
\caption{The blue shaded regions represent elements of $P_{r,c}(\mytikz{0.07}{\protect\tridown},T, 2n+1)$, for the different values of $(r,c)$. The illustrations show how these are modified to reach the hatched regions, from which we can deduce the recursive expression in \eqref{eq:recAprimOdd}.}
\label{fig:recAprimOdd}
\end{figure}

\section{Proof of Main Theorem}

In this section we provide the final steps the proof of Theorem~\ref{thm:main}. First, recall the definition of the quantities $A_n$, $B_n$, $C_n$, and $D_n$ from \eqref{eq:defofABCD}. By the help of the recursions in the previous section we can prove the following lemma. 

\begin{table}[ht]
\footnotesize
\centering
\begin{tabular}{c*{11}{r}}
\toprule
$n$ & 
\multicolumn{1}{p{7mm}}{\hfill 1} & 
\multicolumn{1}{p{7mm}}{\hfill 2} & 
\multicolumn{1}{p{7mm}}{\hfill 3} & 
\multicolumn{1}{p{7mm}}{\hfill 4} & 
\multicolumn{1}{p{7mm}}{\hfill 5} & 
\multicolumn{1}{p{7mm}}{\hfill 6} & 
\multicolumn{1}{p{7mm}}{\hfill 7} & 
\multicolumn{1}{p{7mm}}{\hfill 8} &
\multicolumn{1}{p{7mm}}{\hfill 9} & 
\multicolumn{1}{p{7mm}}{\hfill10} \\ 
\midrule
$A_n$ &    2 &    8 &   22 &   44 &   74 &  112 &  158 &  212 &  274 &  344 \\
$B_n$ &      &    4 &   14 &   32 &   58 &   92 &  134 &  184 &  242 &  308 \\
$C_n$ &      &    2 &    8 &   22 &   44 &   74 &  112 &  158 &  212 &  274 \\
$D_n$ &      &    1 &    4 &   14 &   32 &   58 &   92 &  134 &  184 &  242 \\
\bottomrule
\end{tabular}
\caption{Initial terms for $A$, $B$, $C$ and $D$, from \eqref{eq:defofABCD}. }
\label{table:initialvaluesABCD}
\end{table}

\begin{table}[ht]
\footnotesize
\centering
\begin{tabular}{c*{11}{r}}
\toprule
$n$ & 
\multicolumn{1}{p{7mm}}{\hfill 1} & 
\multicolumn{1}{p{7mm}}{\hfill 2} & 
\multicolumn{1}{p{7mm}}{\hfill 3} & 
\multicolumn{1}{p{7mm}}{\hfill 4} & 
\multicolumn{1}{p{7mm}}{\hfill 5} & 
\multicolumn{1}{p{7mm}}{\hfill 6} & 
\multicolumn{1}{p{7mm}}{\hfill 7} & 
\multicolumn{1}{p{7mm}}{\hfill 8} &
\multicolumn{1}{p{7mm}}{\hfill 9} & 
\multicolumn{1}{p{7mm}}{\hfill10} \\ 
\midrule
$A'_n$ &    1 &    2 &   4 &   8 &   14 &  22 &  32 &  44 &  58 &  74 \\
\bottomrule
\end{tabular}
\caption{Initial terms for $A'$, from \eqref{eq:defofAprim}.}
\label{table:initialvaluesAprim}
\end{table}

\begin{lemma}
\label{lemma:ABequalsCD}
Let $n\geq2$. Then 
\begin{equation}
\label{eq:ABequalsCD}
\begin{split}
	A_{n} &= C_{n+1}, \\
	B_{n} &= D_{n+1}. 
\end{split}
\end{equation}
\end{lemma}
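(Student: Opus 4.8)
The plan is to establish both identities simultaneously by strong induction on $n$, using the eight recursions \eqref{eq:recAeven}--\eqref{eq:recDodd} to reduce each of the two differences $A_n-C_{n+1}$ and $B_n-D_{n+1}$ to the very same differences at strictly smaller indices. Concretely, I take as induction hypothesis $P(k)$: ``$A_k=C_{k+1}$ and $B_k=D_{k+1}$'' for all $2\le k<n$, dispose of the base cases $n=2,3$ by reading off Table~\ref{table:initialvaluesABCD} (where $A_2=C_3=8$, $B_2=D_3=4$, $A_3=C_4=22$, $B_3=D_4=14$), and then carry out the inductive step for $n\ge 4$, split according to the parity of $n$.

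For even $n=2m$ (so $m\ge 2$), I would pair the recursions \eqref{eq:recAeven} and \eqref{eq:recBeven} with \eqref{eq:recCodd} and \eqref{eq:recDodd}; after cancelling the common $-6$ and the repeated $B_{m+1}$/$C_{m+1}$ terms one obtains
\begin{align*}
A_{2m}-C_{2m+1} &= (A_m-C_{m+1})+(B_{m+1}-D_{m+2}), \\
B_{2m}-D_{2m+1} &= (A_m-C_{m+1})+(B_{m+1}-D_{m+2}),
\end{align*}
and both right-hand sides vanish by $P(m)$ and $P(m+1)$, whose indices $m$ and $m+1$ lie in $[2,n)$ precisely because $m\ge 2$. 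For odd $n=2m+1$ (so $m\ge 2$), pairing \eqref{eq:recAodd} and \eqref{eq:recBodd} with \eqref{eq:recCeven} and \eqref{eq:recDeven} collapses the $A_{m+1}$ and $D_{m+2}$ contributions and leaves
\begin{align*}
A_{2m+1}-C_{2m+2} &= 2\bigl(A_{m+1}-C_{m+2}\bigr), \\
B_{2m+1}-D_{2m+2} &= 2\bigl(B_{m+1}-D_{m+2}\bigr),
\end{align*}
which again vanish by $P(m+1)$. This closes the induction and proves \eqref{eq:ABequalsCD}.

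The step contains no genuine analytic difficulty; it is pure bookkeeping of the substitution of indices into the eight recursions. The only points requiring care, and hence the main obstacle, are arithmetic in nature: I must verify that each recursion I invoke is applied within its stated validity range (all invocations occur at argument $m$ or $m+1$, both $\ge 2$ once $n\ge 4$) and that the reduced indices $m$ and $m+1$ are strictly smaller than $n$, which is exactly why the inductive step must begin at $n\ge 4$ with $n=2,3$ handled separately. The likeliest source of error is an off-by-one slip when matching the doubled index of a recursion such as $C_{2m+2}$ to the form \eqref{eq:recCeven} at argument $m+1$; to guard against this I would cross-check every one of the four parity-pairings numerically against Table~\ref{table:initialvaluesABCD} before finalising the argument.
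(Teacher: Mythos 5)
Your proposal is correct and is essentially the paper's own proof: the same parity split, the same pairings of the recursions \eqref{eq:recAeven}--\eqref{eq:recDodd}, and the same cancellations reducing $A_{2m}-C_{2m+1}$ and $B_{2m}-D_{2m+1}$ to $(A_m-C_{m+1})+(B_{m+1}-D_{m+2})$ and the odd cases to $2(A_{m+1}-C_{m+2})$ and $2(B_{m+1}-D_{m+2})$. Your only addition is the explicit care about base cases and index ranges ($m, m+1 \ge 2$ and $< n$), which the paper glosses over but which does not change the argument.
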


\begin{proof}
We give a proof by induction. Let us consider the index $n$ in the equalities in \eqref{eq:ABequalsCD} depending on whether it is odd or even. The initial cases are directly seen from Table~\ref{table:initialvaluesABCD}. Assume for induction that the equalities in \eqref{eq:ABequalsCD} hold for $n<2p$. Then, for the induction step, we have from \eqref{eq:recAeven}, \eqref{eq:recCodd}, and the induction assumption
\begin{align*}
	A_{2p} - C_{2p+1} &= A_{p} + 3B_{p+1} - C_{p+1} - 2B_{p+1} - D_{p+2} = 0,
\end{align*}
and with the help of \eqref{eq:recAodd}, and \eqref{eq:recCeven} we get
\begin{align*}
	A_{2p+1} - C_{2p+2} &= 3A_{p+1} + D_{p+2} - A_{p+1} - 2C_{p+2} - D_{p+2} = 0.
\end{align*}
In the same way we obtain 
\begin{align*}
	B_{2p} - D_{2p+1} &= A_{p} + 2C_{p+1} + B_{p+1} - 3C_{p+1} - D_{p+2} = 0,
\end{align*}
and 
\begin{align*}
	B_{2p+1} - D_{2p+2} &= A_{p+1} + 2B_{p+1} + D_{p+2} - A_{p+1} - 3D_{p+2} = 0,
\end{align*}
which complete the induction and the proof. 
\end{proof}

By the help of Lemma~\ref{lemma:ABequalsCD} and the recursions from the previous section, we obtain
\begin{equation}
\label{eq:recAB}
\renewcommand{\arraystretch}{1.3}
\left\{\begin{array}{l@{ \ }l@{ \ }l@{ \ }l@{}l@{ \ }l@{ \ }l@{}l}
	A_{2n}   &= - 6 &+& &A_{n}   &+&3&B_{n+1}, \\
	A_{2n+1} &= - 6 &+&3&A_{n+1} &+& &B_{n+1}, \\
	B_{2n}   &= - 6 &+&3&A_{n}   &+& &B_{n+1}, \\
	B_{2n+1} &= - 6 &+& &A_{n+1} &+&3&B_{n+1}. 
\end{array}\right.
\end{equation}
Recall also the recursions for $A'_n$ from \eqref{eq:recAprimEven} and~\ref{eq:recAprimOdd}
\begin{equation}
\label{eq:recAprim}
\renewcommand{\arraystretch}{1.3}
\left\{\begin{array}{l@{ \ }l@{ \ }l@{ \ }l@{}l@{ \ }l@{ \ }l@{}l}
	A'_{2n}   &= - 6 &+& &A'_{n}   &+&3&A'_{n+1}, \\
	A'_{2n+1} &= - 6 &+&3&A'_{n+1} &+& &A'_{n+2}.
\end{array}\right.
\end{equation}
The initial values for \eqref{eq:recAB} and \eqref{eq:recAprim} are given in Table~\ref{table:initialvaluesABCD} and Table~\ref{table:initialvaluesAprim} respectively. The last step is now to give the proof of Theorem~\ref{thm:main}.

\begin{proof}[Proof of Theorem~\ref{thm:main}]
To prove \eqref{eq:MainUpwards}, we give simultaneously an explicit formula for $B_n$ from \eqref{eq:recAB}. That is, we claim the following 
\begin{align}
	A_n &= 4n^2 - 6n + 4, \label{eq:Aformula}\\
	B_n &= 4n^2 -10n + 8. \label{eq:Bformula}
\end{align}
We give a proof of the above claim by induction on $n$. The initial cases are directly from Table~\ref{table:initialvaluesABCD}. Assume for induction that \eqref{eq:Aformula} and \eqref{eq:Bformula} hold for $n<2p$. For the induction step we have by \eqref{eq:recAB}, \eqref{eq:Aformula}, \eqref{eq:Bformula}, and the induction assumption
\begin{align*}
	A_{2p} &- \big(4(2p)^2 - 6(2p) + 4\big) \\
	&= - 6+A_{p} + 3B_{p+1}  - \big(4(2p)^2 - 6(2p) + 4\big) \\
	&= - 6 + \big(4p^2 - 6p + 4\big) + 3\big(4(p+1)^2 -10(p+1) + 8\big) \\
	&\quad - \big(4(2p)^2 - 6(2p) + 4\big) \\
	&= 0. 
\end{align*}
The remaining cases for $A_{2p+1}$, $B_{2p}$, and $B_{2p+1}$ follow in the same way.

Similarly, the formula for $A'_n$ from \eqref{eq:MainDownwards} is easily verified for small $n$ from Table~\ref{table:initialvaluesAprim}. Assume for induction that \eqref{eq:MainDownwards} holds for $n<2p$. Then, in the induction step, we have with the help of \eqref{eq:recAprim}
\begin{align*}
	A'_{2p} & - \big( (2p)^2 - 3(2p) + 4\big) \\
	&= -6 + A'_{p} + 3A'_{p+1} - \big( (2p)^2 - 3(2p) + 4\big) \\
	&= -6 + \big( p^2 - 3p + 4\big) + 3 \big( (p+1)^2 - 3(p+1) + 4\big) \\
	& \quad - \big( (2p)^2 - 3(2p) + 4\big) \\
	&= 0.
\end{align*}
The case for $n=2p+1$ follows analogously.
\end{proof}

Let us close the paper with a small remark and question; from the formulas for $A_n$, $B_n$, and $A'_n$ from \eqref{eq:MainUpwards}, \eqref{eq:Bformula}, and \eqref{eq:MainDownwards} we see that there is a neat connection between the number of upward- and downward oriented triangular patterns, namely
\begin{equation}
\label{eq:ConnectionABandAprim}
\renewcommand{\arraystretch}{1.3}
\left\{\begin{array}{l@{ \ }l}
	A'_{2n}   &= A_n, \\
	A'_{2n-1} &= B_n, \\
\end{array}\right.
\end{equation}
for $n\geq2$. It would be interesting to see and find out if there is a direct geometrical or combinatorial argument leading to the connection in \eqref{eq:ConnectionABandAprim}, or if it is just a numerical coincidence.

\end{document}